\definecolor{cobalt}{RGB}{61,99,181}
\newtheorem{thm}{Theorem}[section]
\newtheorem{cor}[thm]{Corollary}
\newtheorem{lem}[thm]{Lemma}
\newtheorem{rem}[thm]{Remark}
\newtheorem{ex}[thm]{Example}
\numberwithin{equation}{section}
\newtheorem{defi}{Definition}[section]
\date{\today}
\newcommand{\Rmnum}[1]{\expandafter\@slowromancap\romannumeral #1@}
\newcommand{\N}{\mathbb{N}}
\newcommand{\Z}{\mathbb{Z}}
\newcommand{\fai}{\varphi}
\newcommand{\wci}{w C_\varphi}
\newcommand{\wcn}{(w C_\varphi)^n}
\newcommand{\you}{\rightarrow}
\begin{document}

\title[hypercyclic operators]{unbounded chaotic weighted pseudo-shift}

\author[ruxi Liang]{ruxi Liang\textsuperscript{1,2}}
\address{\textsuperscript{1} National Astronomical Observatories, Chinese Academy of Sciences, Beijing, 100101, P. R. China}
\address{\textsuperscript{2} University of Chinese Academy of Sciences, 19A Yuquanlu, Beĳing 100049, P. R. China}

\author[Pengyu Qin]{Pengyu Qin\textsuperscript{3}}
\address{\textsuperscript{3} College of Mathematics and Statistics, Chongqing University, Chongqing, 401331, P. R. China}

\author[yonglu Shu]{yonglu Shu\textsuperscript{*}}
\address{\textsuperscript{*} College of Mathematics and Statistics, Chongqing University, Chongqing, 401331, P. R. China}
\email{shuyonglu@163.com}
\keywords{Hypercyclic operator, Chaoticity, Pseudo-shift}

\begin{abstract}
In this paper, based on the work of Vijay K. Srivastava and Harish Chandra,~we give a characterization of the unbounded hypercyclic weighted pseudo-shift operator~$wC_{\varphi}$ on $\ell^p$~or~$c_0$.~Moreover we use the hypercyclicity criterion of Bès, Chan, and Seubert to give a necessary and sufficient condition in order that $wC_{\varphi}$ be a chaotic operator.
\end{abstract} \maketitle

\section{Introduction}\label{S1}
Let $X$ be be a topological vector space over the field $\mathbb{K}=\mathbb{R}$~or~$\mathbb{C}$,~a continuous linear operator $T$ on space $X$ is called $hypercyclic$ if there exists a vector $x\in X$ whose orbit $orb\left( x,T\right)=\left\{T^nx~|~n=0,1,2,...... \right\}$ is dense in $X$. Such a vector $x$ is said to be a hypercyclic vector for $T$, and the set of all hypercyclic vectors for $T$ is denoted by $HC(T)$. Finally, $T$ is said to be $chaotic$ if it is hypercyclic and its set of periodic points is dense in $X$.We refer to \cite{LinearChaos} for a more detailed study of hypercyclicity and chaoticity.

The study of linear dynamics has a long history and it is closely connected to the problem of invariant subspaces in Hilbert spaces.
In \cite{birkhoff1929demonstration}, Birkhoff showed the hypercyclicity of the operator of translation on the space $H(\mathbb{C})$of entire functions, while MacLane’s result\cite{maclane1952sequences} was on the hypercyclicity of the differentiation operator on $H(\mathbb{C})$. And later the first example of a hypercyclic operator on a Banach space was given by Rolewicz \cite{rolewicz1969orbits} in 1969, he showed that for any $\lambda$ with $|\lambda|>1$, the shift $\lambda B\left( x_n \right) _n=\left( \lambda x_{n+1} \right) _n$ are hypercyclic on the sequence space $\ell^2$. Motivated by these examples, the hypercyclicity and chaoticity of linear operators, as one of the most-studied properties in linear dynamics, has become an active area of research\cite{kitai1984invariant,gethner1987universal}.

Among these examples, the most-studied class is certainly that of weighted shifts. Following the paper of Rolewicz\cite{rolewicz1969orbits}, In 1995,~Salas \cite{salas1995hypercyclic} characterized hypercyclic and weakly mixing unilateral and bilateral weighted shifts on $\ell^2$ and $\ell^2\left(\mathbb{Z}\right)$ respectively while he characterized the supercyclic bilateral weighted shifts on Fr\'{e}chet space in \cite{salas1999supercyclicity}. The mixing weighted backward shift on $\ell^2$ and $\ell^2\left(\mathbb{Z}\right)$ were characterized by Costakis and Sambarino\cite{costakis2004topologically}. Furthermore, Martínez and Peris\cite{martinez2002chaos} gave new examples of chaotic backward shift operators in the special case of Köthe sequence spaces.

In \cite{salas1995hypercyclic}, the sufficient and necessary condition for the hypercyclicity of the weighted shift operator have been shown:
\begin{thm}
    Let $T$ be a continuous weighted backward shift on $\ell^2\left(\mathbb{N}\right)$ defined by $T\left(\sum_{k\in \mathbb{N}}{x_ke_k} \right) =\sum_{k\in \mathbb{N}}{w_kx_{k+1}e_k}$. Then $T$ is  hypercyclic if and only if there is an increasing sequence $\left( n_k \right) _{k\in \mathbb{N}}$~of positive integers such that the weight sequence $\left\{ w_k \right\} _{k\in \mathbb{N}}$ satisfies
\begin{align*}
    \prod_{j=1}^{n_k}{\left| w_j \right|}\rightarrow \infty , \ \ as\ n_k\rightarrow \infty.
\end{align*}
\end{thm}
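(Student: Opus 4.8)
The plan is to prove both implications by comparing $T$ with the associated weighted forward shift and reducing everything to the growth of the partial products $P_n := \prod_{j=1}^n |w_j|$. First I would record the elementary computation that, writing $x=\sum_j x_j e_j$, the iterates act by $T^n e_j = \bigl(\prod_{i=1}^{n} w_{j-i}\bigr) e_{j-n}$ for $j>n$ and $T^n e_j=0$ otherwise, so that the $m$-th coordinate of $T^n x$ equals $x_{m+n}\prod_{l=m}^{m+n-1} w_l$. I would also note at the outset that hypercyclicity (and, in the other direction, the product hypothesis) forces every $w_j\neq 0$: a single vanishing weight would make all sufficiently long partial products zero and confine the range of $T$ to a proper closed subspace, contradicting the dense range that every hypercyclic operator must have.

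For sufficiency, assume $P_{n_k}\to\infty$ along an increasing sequence $(n_k)$. Take the dense subspace $X_0$ of finitely supported sequences and define the right inverse $S e_j = w_j^{-1} e_{j+1}$, so that $TS=I$ on $X_0$ and $S^n e_j = \bigl(\prod_{l=j}^{j+n-1} w_l\bigr)^{-1} e_{j+n}$. For a fixed basis vector $e_j$ one has $T^{n}e_j=0$ once $n\ge j$, so $T^{n_k}y\to 0$ for every $y\in X_0$; moreover $\|S^{n_k}e_j\| = \bigl(\prod_{l=j}^{j+n_k-1}|w_l|\bigr)^{-1}$, and since $\prod_{l=j}^{j+n_k-1}|w_l| = P_{j+n_k-1}/P_{j-1}\to\infty$, we get $S^{n_k}y\to 0$ for every $y\in X_0$, while $T^{n_k}S^{n_k}y=y$ holds exactly. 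Thus the three conditions of the Hypercyclicity Criterion hold along $(n_k)$, and $T$ is hypercyclic.

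For necessity, fix a hypercyclic vector $x$. The key point is to produce return times $n_k\to\infty$ with $T^{n_k}x\to e_1$. I would obtain this from the standard fact that $T^k x$ is again hypercyclic for every $k$ (the orbit of $T^k x$ is the orbit of $x$ with finitely many points removed, hence still dense since $\ell^2$ has no isolated points), so the orbit meets every ball $B(e_1,\varepsilon)$ at arbitrarily large times; a diagonal extraction then yields $n_k\to\infty$ with $T^{n_k}x\to e_1$. Reading off the first coordinate gives $x_{n_k+1}\,P_{n_k} = (T^{n_k}x)_1 \to 1$, while $x\in\ell^2$ forces $x_{n_k+1}\to 0$; hence $P_{n_k}\to\infty$, which is the desired conclusion.

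The main obstacle is the necessity direction, specifically guaranteeing that the return times can be chosen tending to infinity rather than merely the existence of a single near-hit on $e_1$; this is exactly where the invariance of hypercyclicity under $T^k$ (equivalently, the absence of isolated points in the orbit) is needed. The sufficiency direction is essentially bookkeeping once the forward shift $S$ is in place, the only care being the harmless reindexing $\prod_{l=j}^{j+n-1}|w_l| = P_{j+n-1}/P_{j-1}$ that lets one pass from an arbitrary $e_j$ back to the partial products starting at $1$.
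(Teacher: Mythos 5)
Your necessity argument is correct and complete: you rightly identify that the only delicate point is forcing the return times to tend to infinity, you justify it properly (a dense orbit minus finitely many points stays dense because $\ell^2$ has no isolated points), and reading off the first coordinate, $(T^{n_k}x)_1=P_{n_k}x_{n_k+1}\to 1$ together with $x_{n_k+1}\to 0$ indeed gives $P_{n_k}\to\infty$, where $P_n=\prod_{j=1}^{n}|w_j|$. Note that the paper only quotes this theorem from Salas as background and contains no proof of it, so the comparison can only be with the standard arguments in the literature.

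The sufficiency direction, however, has a genuine gap at the line ``$\prod_{l=j}^{j+n_k-1}|w_l|=P_{j+n_k-1}/P_{j-1}\to\infty$''. The hypothesis gives $P_{n_k}\to\infty$ along the given sequence, but this does \emph{not} imply $P_{j+n_k-1}\to\infty$ for fixed $j\ge 2$: the partial products are not monotone and may collapse immediately after the indices $n_k$. Concretely, build the weights in blocks: in the $k$-th block put $k$ consecutive weights equal to $2$ (so $P$ climbs to $2^k$ at some index $a_k$), then one weight equal to $2^{-k}$ (so $P_{a_k+1}=1$), then weights $1$ until the next block. Then $\sup_n|w_n|=2$, and $P_{a_k}=2^k\to\infty$, so the hypothesis holds with $n_k=a_k$; yet $\lVert S^{n_k}e_2\rVert=P_1/P_{n_k+1}=P_1$ does not tend to $0$, so condition $(2)$ of the criterion fails for your data along this sequence even though the operator \emph{is} hypercyclic. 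The repair must use the continuity of $T$, which your proof never invokes: with $M=\max\{\sup_n|w_n|,1\}<\infty$ one has $P_{n_k-i}\ge P_{n_k}M^{-i}$, so replacing $n_k$ by $m_k=n_k-r_k$ with $r_k\to\infty$ slowly enough that $M^{r_k}\le\sqrt{P_{n_k}}$ gives, for each fixed $j$ and all large $k$, $P_{m_k+j-1}\ge P_{n_k}M^{-r_k}\ge\sqrt{P_{n_k}}\to\infty$; then your computation of $\lVert S^{m_k}e_j\rVert$ goes through and the criterion applies along $(m_k)$. (Alternatively, one follows Salas's original direct construction of a hypercyclic vector.) As written, the inference is false and the sufficiency half is incomplete.
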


This result is generalized by K.-G. Grosse-Erdmann to Fr\'{e}chet space in \cite{2000Grosse-Erdmann} and the characterization of chaotic weighted backward shift is provided in the following terms:

\begin{thm}
    Let $T$ be a weighted backward shift on a Fr\'{e}chet space $X$ in which $\left( e_n \right) _{n\in \mathbb{N}}$ is an unconditional basis. Then the following assertions are equivalent:\\
     \indent $(1)$  $T$ is chaotic.\\
     \indent $(2)$  $T$ has a nontrivial periodic point.\\
     \indent $(3)$  the series
           \begin{align*}
               \sum_{n=1}^{\infty}{\left( \prod_{k=1}^n{w_k} \right) ^{-1}}e_n
           \end{align*}
     converges in $X$.
\end{thm}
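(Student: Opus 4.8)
The plan is to prove the cycle $(1)\Rightarrow(2)\Rightarrow(3)\Rightarrow(1)$. Throughout I write $P_n=\prod_{k=1}^{n}w_k$ and work with the formal right inverse $S$ determined by $Se_n=w_{n+1}^{-1}e_{n+1}$, so that $TS=I$ and hence $T^{n}S^{n}=I$ for every $n$. The implication $(1)\Rightarrow(2)$ is immediate: if $T$ is chaotic its periodic points are dense in the infinite-dimensional space $X$, so at least one of them is nonzero.

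For $(3)\Rightarrow(1)$ I would first recover hypercyclicity from the Hypercyclicity Criterion applied along the full sequence $(n)$, taking as dense subspace $X_0=\operatorname{span}\{e_n\}$. On $X_0$ one has $T^{n}x\to 0$, since a finitely supported vector is annihilated after finitely many backward shifts, while $S^{n}e_j=P_jP_{j+n}^{-1}e_{j+n}\to 0$ because these are, up to the fixed factor $P_j$, tail terms of the convergent series $\sum_n P_n^{-1}e_n$; combined with $T^{n}S^{n}=I$ this verifies the criterion. To obtain dense periodic points I would fix a finitely supported $y=\sum_{j=1}^{m}y_je_j$ and, for each $N\ge m$, set $p_N=\sum_{k\ge 0}S^{kN}y$. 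Unconditionality of the basis makes this a convergent subseries of $\sum_n P_n^{-1}e_n$ with bounded multipliers $y_jP_j$; the identities $T^{N}S^{kN}=S^{(k-1)N}$ for $k\ge 1$ together with $T^{N}y=0$ (as $y$ is supported in $\{1,\dots,m\}\subseteq\{1,\dots,N\}$) give $T^{N}p_N=p_N$, while the tail $p_N-y=\sum_{k\ge 1}S^{kN}y$ is supported on indices $\ge N+1$ and therefore tends to $0$ as $N\to\infty$. Hence every finitely supported $y$ is approximated by periodic points, and $T$ is chaotic.

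The delicate step is $(2)\Rightarrow(3)$. Starting from a nonzero periodic point $p=\sum_n p_ne_n$ with $T^{N}p=p$, I would choose $k_0$ with $p_{k_0}\ne 0$; the relation $T^{N}p=p$ forces, along the residue class $k_0\pmod N$, the coordinates $p_{k_0+rN}=p_{k_0}P_{k_0}P_{k_0+rN}^{-1}$, so projecting $p$ onto this class (continuity of the coordinate functionals and unconditionality) shows that $\sum_r P_{k_0+rN}^{-1}e_{k_0+rN}$ converges. The main obstacle is that convergence along a single residue class does not by itself control the full series, and indeed for a merely formal, discontinuous shift it need not. Here the \emph{continuity} of $T$ is essential: applying $T$ to this convergent series and using $w_kP_k^{-1}=P_{k-1}^{-1}$ transports it to the neighbouring class, namely $\sum_r P_{k_0+rN-1}^{-1}e_{k_0+rN-1}$ converges; iterating $T$ a total of $N$ times sweeps through all residue classes modulo $N$, and summing the resulting $N$ convergent series (again invoking the unconditional basis) yields convergence of $\sum_n P_n^{-1}e_n$, which is precisely $(3)$. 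I expect this propagation across residue classes, powered by the continuity of $T$, to be the crux of the whole argument.
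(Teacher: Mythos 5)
Your proposal is correct, but there is nothing in the paper to check it against line by line: this statement is Theorem 1.2, which the paper quotes as background from \cite{2000Grosse-Erdmann} without proof. On its merits, your cycle $(1)\Rightarrow(2)\Rightarrow(3)\Rightarrow(1)$ is sound and is essentially Grosse-Erdmann's original argument. In $(3)\Rightarrow(1)$, the criterion applies because $S^{n}e_j=P_jP_{j+n}^{-1}e_{j+n}\to 0$ (terms of a convergent series), and your approximants $p_N=\sum_{k\ge 0}S^{kN}y$ are finite sums of constant multiples of subseries of $\sum_n P_n^{-1}e_n$; unconditionality is genuinely needed twice here — for subseries convergence and for the uniform tail estimate giving $p_N-y\to 0$ (the Cauchy characterization of unconditional convergence over finite subsets of $(N,\infty)$) — and you invoke it at exactly the right places. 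Your $(2)\Rightarrow(3)$, reading off $p_{k_0+rN}=p_{k_0}P_{k_0}P_{k_0+rN}^{-1}$ along one residue class and transporting convergence across the remaining classes by continuity of $T$, is indeed the crux; only two cosmetic points deserve attention: your convention $Se_n=w_{n+1}^{-1}e_{n+1}$ corresponds to $Te_n=w_ne_{n-1}$, whereas the paper's Theorem 1.1 has $Te_{n+1}=w_ne_n$ (a harmless reindexing), and the sweep requires $j=0,1,\dots,N-1$ applications of $T$, during which the $r=0$ term may be annihilated once $j\ge k_0$ (since $Te_1=0$), though dropping finitely many terms costs nothing. Finally, it is worth seeing that your proof is precisely the $\varphi(n)=n+1$ special case of the machinery this paper builds for pseudo-shifts: your $p_N$ specializes the periodic-point representation $x_{N,k}$ of Theorem \ref{zhouqidianbiaoshi} (the backward-product part disappears because $1$ has no preimage), your choice $N\ge m$ pushing the tail beyond the support of $y$ is Lemma \ref{taoyi}, and your residue-class sweep corresponds to the paper's use of $(wC_{\varphi})^i(x_N)\in\ell^p$ for $i=0,\dots,N-1$ together with the decomposition $A_k=\bigsqcup_{i=0}^{N-1}A_{N,\varphi^i(k)}$ of Lemma \ref{Glian}, which is exactly ``residue classes along the orbit.''
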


 Moreover, K.-G. Grosse-Erdmann proposed the weighted pseudo-shift operator and discussed its hypercyclicity in \cite{2000Grosse-Erdmann}. Let $w\in \ell^{\infty}$ and $\varphi$ be a self-map on the set of natural numbers $\mathbb{N}=\left\{1,2,3...... \right\} $, the weighted pseudo-shift on $\ell^p$ is defined by 
\begin{align*}
    wC_\varphi \left( x_n \right) _{n\in \mathbb{N}}=\left( w_nx_{\varphi \left( n \right)} \right) _{n\in \mathbb{N}}
\end{align*}
the operator $wC_\varphi$ is bounded if and only if $\underset{l\in \mathbb{N}}{\sup}\sum_{k\in \varphi ^{-1}\left( l \right)}{\left| w_k \right|^p <\infty}$. In \cite{Hypercyclicity},  Vijay K. Srivastava and Harish Chandra give a characterization of the hypercyclic weighted pseudo-shift as following:
\begin{thm}\label{youjieweiyiwei}
    The operator $wC_\varphi: \ell ^p \rightarrow \ell ^p$ is hypercyclic if and only if the following assertions hold: 
    \indent $(1)$ $\varphi$ is injective. \\
    \indent $(2)$ $\varphi ^n$ has no fixed point for any $n \in \mathbb{N} $. \\
    \indent $(3)$ For each $n \in \N$, there exists an increasing sequence $\{ m_k \}_{k \in \mathbb{N}}$ of positive integers such that 
    \begin{align*}
    \lim_{k\rightarrow \infty}\left| w_ n w_ {\varphi \left( n \right)} \cdots w_ {\varphi ^{m_k-1}\left( n \right)}  \right|=\infty .
    \end{align*}
\end{thm}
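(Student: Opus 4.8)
The plan is to reduce both directions to the behaviour of the iterates on the canonical basis. Writing $T := wC_\varphi$ and $e_n$ for the standard unit vectors, a direct induction from $(Tx)_n = w_n x_{\varphi(n)}$ gives
\[
  (T^m x)_n = W_m(n)\, x_{\varphi^m(n)}, \qquad W_m(n) := \prod_{j=0}^{m-1} w_{\varphi^j(n)},
\]
so that condition $(3)$ says precisely that $|W_{m_k}(n)|\you\infty$ along a suitable subsequence. Since $w$ need not lie in $\ell^\infty$, I would treat $T$ as a densely defined (possibly unbounded) operator and take the finitely supported sequences $c_{00}=\operatorname{span}\{e_n\}$ as a common invariant core; this is the natural domain on which to apply the unbounded hypercyclicity criterion of Bès, Chan and Seubert.

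For sufficiency, first note that $(3)$ forces $w_{\varphi^j(n)}\neq 0$ for every $n,j$, so the right inverse $S e_n := w_n^{-1} e_{\varphi(n)}$ is well defined on $c_{00}$; injectivity of $\varphi$ (condition $(1)$) ensures $S^m e_n = W_m(n)^{-1} e_{\varphi^m(n)}$ and, after matching indices, $T^m S^m = \mathrm{id}$ on $c_{00}$. The three hypotheses then feed the criterion as follows: $S^{m_k} e_n = W_{m_k}(n)^{-1} e_{\varphi^{m_k}(n)}\you 0$ by $(3)$; $T^{m_k} S^{m_k}y = y$ exactly; and for the remaining requirement $T^{m_k}x\you 0$ I would use conditions $(1)$ and $(2)$ to control the backward orbit — by injectivity and the absence of periodic points the preimages $\varphi^{-m}(l)$ are distinct and eventually leave any finite set (indeed cease to exist once the chain through $l$ is exhausted), so $T^m e_l\you 0$. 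A diagonal argument over the countably many basis vectors produces a single increasing sequence $(m_k)$ serving all three limits simultaneously, and the criterion yields hypercyclicity.

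For necessity, I would exploit that hypercyclic operators have dense range and that coordinate projections are continuous and surjective. If $\varphi$ were not injective, with $\varphi(a)=\varphi(b)$ and $a\neq b$, every vector $y$ in the range would satisfy the linear relation $w_b y_a = w_a y_b$, confining the range to a proper closed subspace — impossible. If $\varphi^N(j)=j$ for some $N\geq 1$, then by injectivity the cycle $F=\{j,\varphi(j),\dots,\varphi^{N-1}(j)\}$ is $\varphi$-invariant, so the $F$-coordinates evolve autonomously under a weighted cyclic map $A$ on $\mathbb{K}^N$; projecting a hypercyclic vector would make $A$ hypercyclic on a finite-dimensional space, which is absurd. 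Finally, to obtain $(3)$ I fix $n$ and approximate the targets $k e_n$ by the dense orbit: from $\|T^{m_k}x - k e_n\|<1$ one gets $|W_{m_k}(n)\, x_{\varphi^{m_k}(n)}|>k-1$, and since $|x_{\varphi^{m_k}(n)}|\leq\|x\|_p$ this forces $|W_{m_k}(n)|\you\infty$ along an increasing sequence.

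The main obstacle I anticipate is the joint verification of the two escape conditions $T^{m_k}x\you 0$ and $S^{m_k}y\you 0$ along one common sequence. The decay $S^{m_k}y\you 0$ is handed to us by $(3)$ only index-by-index, while $T^{m_k}x\you 0$ is a statement about the backward $\varphi$-orbits; reconciling these — extracting a single subsequence that simultaneously inflates all the forward products $W_{m_k}(n)$ with $n\in\operatorname{supp}(y)$ and exhausts the finitely many backward chains meeting $\operatorname{supp}(x)$ — is the delicate point, and is exactly where injectivity and the no-periodic-point hypothesis do the real work of preventing the orbits from folding back on themselves.
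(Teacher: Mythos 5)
Your overall architecture coincides with the paper's: the paper proves this statement (in its unbounded form, Theorem \ref{wujiew}) by exactly your route --- the B\`es--Chan--Seubert criterion applied on $Y=c_{00}$ with the right inverse $S e_n = w_n^{-1} e_{\varphi(n)}$, a dense-range argument for the injectivity of $\varphi$, and approximation of (multiples of) $e_n$ by the dense orbit for condition $(3)$. Your two deviations in the necessity half are correct and, if anything, cleaner: the closed-subspace relation $w_b y_a = w_a y_b$ is a tidier version of the paper's $\varepsilon$-approximation argument (Theorem \ref{danshe}), and your finite-cycle quasiconjugacy argument for $(2)$ --- projecting onto the $\varphi$-invariant cycle $F$ and using that no finite-dimensional space supports a hypercyclic operator --- avoids the paper's route through the adjoint-eigenvalue theorem (Theorem \ref{lc2.53}) and through the fact that powers of hypercyclic operators are again hypercyclic, both of which require extra care for unbounded, densely defined operators. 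Your remark that $(3)$ forces $w_{\varphi^j(n)}\neq 0$ is a detail the paper leaves implicit.

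The sufficiency half, however, has a genuine gap, located exactly at the two points you flagged but did not actually close. First, the step $T^m e_l \to 0$: one computes $T^m e_l = \bigl(\prod_{j=1}^{m} w_{\varphi^{-j}(l)}\bigr) e_{\varphi^{-m}(l)}$ when $\varphi^{-m}(l)$ exists, and $0$ otherwise, so ``the preimages eventually leave every finite set'' yields only coordinatewise escape, while $\lVert T^m e_l\rVert$ equals the \emph{backward} weight product, which hypotheses $(1)$--$(3)$ do not constrain at all. Concretely, take $\varphi$ as in the paper's Example \ref{failizi} (injective, no periodic points, every point has preimages of all orders, a single two-sided orbit) and $w_n\equiv 2$: then $(1)$--$(3)$ hold with $|w_n w_{\varphi(n)}\cdots w_{\varphi^{m-1}(n)}| = 2^m \to\infty$ for every $n$, yet $\lVert T^m e_l\rVert = 2^m\to\infty$; indeed $\lVert Tx\rVert = 2\lVert x\rVert$, so no orbit is dense. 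Hence this step is not merely unproved but unrepairable from $(1)$--$(3)$ alone; it works only when every backward chain terminates, i.e.\ $T^m e_l$ is eventually $0$. Second, the ``diagonal argument'' merging the $n$-dependent sequences $(m_k)$ into a single one does not exist in general: when $\mathbb{N}$ splits into two or more $\varphi$-orbits the products $W_m(n_1)$, $W_m(n_2)$ on distinct orbits are completely independent, and one can arrange $|W_m(n_1)|\to\infty$ only along even $m$ while $|W_m(n_2)|\to\infty$ only along odd $m$, leaving no common subsequence --- injectivity and the absence of periodic points do none of ``the real work'' here. You should know that the paper's own proof of the unbounded analogue glosses over both points (its limit \eqref{zhunze2} commits the same coordinatewise-versus-norm slip, and it applies condition $(3)$ with the $n$-dependent sequence as though it were uniform), so your proposal faithfully reproduces the published approach, including its weak spots; but as a proof of the stated equivalence the sufficiency direction stands incomplete, and is in fact false as stated unless additional hypotheses (such as termination of all backward $\varphi$-chains, together with a common blow-up sequence) are imposed.
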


We need to note that the above studies are all for bounded operators. In fact, the unbounded hypercyclic operators have also attracted the interest of scholars. In \cite{bes2001chaotic},  Bès, Chan, and Seubert have defined the notion of hypercyclicity and chaoticity for an unbounded operator. In \cite{delaubenfels2003chaos}, R. deLaubenfels, H. Emamirad, and K.-G. Grosse-Erdmann generalized the notion of hypercyclic and chaotic semigroups to families of unbounded operators.
\begin{defi}\label{chaoxunhuandingyi}
Let $T$ be an unbounded operator on a separable infinite dimensional Banach space $X$, the densely defined operator $T$ is called hypercyclic if there is a vector $f \in D\left( T^{\infty}  \right)=\bigcap_{n\ge 1}{ D\left( T^n \right)}$~such that its orbit $\left\{ f, Tf, T^2f, T^3f,...... \right\}$ is dense in $X$. Every such vector $f$ is called a hypercyclic vector for $T$. If there exists an integer $n\in \mathbb{N}$ and a vector $f\in D\left( T^n \right) $ such that $T^nf=f$, such vector is called periodic and the operator A is said to be chaotic if it possesses both hypercyclic vector and dense sets of periodic points.
\end{defi}

Motivated by  \cite{Hypercyclicity}, Our aim in this paper is to extend the above result to the unbounded weighted pseudo-shift, establishing a necessary and sufficient condition so that a such operator could be hypercyclic or chaotic on $\ell^p$ and $c_0$ respectively.

To achieve this end, we must use a Hypercyclicity Criterion for unbounded operators. Such a criterion has been already provided by Bès, Chan, and Seubert \cite{bes2001chaotic} in the following terms:
\begin{thm}\label{wujiepanding}
    Let $X$ be a separable infinite dimensional Banach space and let $T$ be a densely defined linear operator on $X$ for which $T^n$ is a closed operator for all positive integers $n$. If there exist a dense subset $Y$ of the domain of $T$ and a (possibly nonlinear and discontinuous) mapping $S: Y \rightarrow Y$ such that \\
    \indent $(1)$ $TSy = y $, for all $y \in Y$,\\
    \indent $(2)$ $lim_{n \rightarrow \infty} S^n y = 0$, for all $y \in Y$,\\
    \indent $(3)$ $lim_{n \rightarrow \infty} T^n y = 0$, for all $y \in Y$,\\
    then $T$ is hypercyclic.
\end{thm}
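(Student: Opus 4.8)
\emph{Sketch of proof.} The plan is to build a single hypercyclic vector explicitly rather than to argue by Baire category, since for an unbounded operator the domain $D(T^\infty)=\bigcap_{n\ge 1}D(T^n)$ carries no obvious Baire topology. Because $X$ is separable and $Y$ is dense in $X$ (note $Y\subseteq D(T^\infty)$ is forced by hypotheses $(2)$ and $(3)$, and $Y$ dense in $D(T)$ is dense in $X$), I would first fix a sequence $(y_j)_{j\ge 1}\subseteq Y$ that is dense in $X$. The candidate hypercyclic vector will be
\[
  f=\sum_{k=1}^{\infty} S^{n_k} y_k ,
\]
where $(n_k)_{k\ge 1}$ is a strictly increasing sequence of positive integers chosen so rapidly that both the series for $f$ and all of its images under the powers $T^{n_m}$ converge. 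The engine of the ansatz is the cancellation $T^{n_k}S^{n_k}y_k=y_k$, obtained by iterating hypothesis $(1)$ along the points $S^{j}y_k\in Y$; thus $S^{n_k}y_k$ is a preimage of $y_k$ under $T^{n_k}$ whose norm is small by hypothesis $(2)$.

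I would choose the $n_k$ recursively. Having fixed $n_1<\dots<n_{k-1}$, I would pick $n_k>n_{k-1}$ so large that $\|S^{n_k}y_k\|<2^{-k}$, that $\|S^{n_k-n_m}y_k\|<2^{-k}$ for every $m<k$ (both possible by $(2)$ since $n_k-n_m\to\infty$), and that $\|T^{n_k-n_m}y_m\|<2^{-k}$ for every $m<k$ (possible by $(3)$); each is a finite list of conditions, each holding for all large $n_k$. The first bound makes $f$ a convergent series. For a fixed power $n_m$, applying $T^{n_m}$ termwise and using the two identities $T^{n_m}S^{n_k}y_k=S^{n_k-n_m}y_k$ for $k>m$ and $T^{n_m}S^{n_k}y_k=T^{n_m-n_k}y_k$ for $k<m$, one reaches the formal expansion
\[
  T^{n_m}f=\sum_{k<m}T^{n_m-n_k}y_k \;+\; y_m \;+\; \sum_{k>m}S^{n_k-n_m}y_k .
\]
By the recursive bounds the two tails have combined norm at most $m\,2^{-m}$, so $\|T^{n_m}f-y_m\|\to 0$ as $m\to\infty$.

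The step I expect to be the crux is justifying that $T^{n_m}$ may actually be moved inside the infinite sum, and simultaneously that $f$ genuinely lies in $D(T^\infty)$ so that its orbit is even defined. This is precisely where the hypothesis that every $T^n$ is \emph{closed} enters. I would apply closedness to the partial sums $P_N=\sum_{k=1}^N S^{n_k}y_k$, each of which lies in $D(T^{n_m})$ because $S^{n_k}y_k\in Y\subseteq D(T^\infty)$: one has $P_N\to f$, and by the absolute convergence arranged above $T^{n_m}P_N=\sum_{k=1}^N T^{n_m}S^{n_k}y_k$ converges to the right-hand side of the last display; closedness of $T^{n_m}$ then forces $f\in D(T^{n_m})$ with $T^{n_m}f$ equal to that limit. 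Since $D(T^n)$ decreases in $n$ and $(n_m)$ is cofinal in $\N$, membership in every $D(T^{n_m})$ gives $f\in D(T^\infty)$.

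Finally I would deduce density of the orbit. Given $z\in X$ and $\varepsilon>0$, density of $(y_j)$ furnishes arbitrarily large indices $m$ with $\|y_m-z\|<\varepsilon/2$; choosing such an $m$ also large enough that $\|T^{n_m}f-y_m\|<\varepsilon/2$ yields $\|T^{n_m}f-z\|<\varepsilon$. Hence $\{T^n f:n\ge 0\}$ meets every ball, so $f$ is hypercyclic and $T$ is hypercyclic. The only genuinely delicate points are the bookkeeping that makes all countably many series $\sum_k T^{n_m}S^{n_k}y_k$ converge simultaneously and the closed-graph passage to the limit; the remaining estimates are routine.
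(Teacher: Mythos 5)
Your proposal is correct, and it is essentially the proof of this criterion in the literature: the paper itself does not prove Theorem \ref{wujiepanding} but imports it from B\`es, Chan and Seubert \cite{bes2001chaotic}, whose argument is exactly your direct construction $f=\sum_{k}S^{n_k}y_k$ (Baire category being unavailable for a discontinuous $T$), with the iterated identity $T^jS^jy=y$ on $Y$, the recursive smallness conditions, and closedness of each $T^{n_m}$ applied to the partial sums to force $f\in D(T^{\infty})$ and to evaluate $T^{n_m}f$. The only step you state too quickly --- that the dense sequence $(y_j)$ approximates a given $z$ at \emph{arbitrarily large} indices --- is harmless: any ball in an infinite-dimensional Banach space contains infinitely many terms of a dense sequence, or one may simply arrange the enumeration so that each $y_j$ recurs infinitely often.
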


Theorem \ref{wujiepanding} also holds when the entire sequence of positive integers in the hypothesis is replaced by a subsequence of positive integers. To prove hypercyclicity, we also need the following theorem given in \cite{LinearChaos}.
\begin{thm}\label{lc2.53}
    Let $T$ be a hypercyclic operator. Then its adjoint $T^*$ has no eigenvalues. Equivalently, every operator $T - \lambda I$, $\lambda \in \mathbb{K}$, has dense range.
\end{thm}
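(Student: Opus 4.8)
The plan is to prove the two assertions equivalent by a standard duality argument and then to show that hypercyclicity forces $T^*$ to have empty point spectrum. For the equivalence, I would fix $\lambda \in \mathbb{K}$ and set $S = T - \lambda I$, so that $S^* = T^* - \lambda I$, and invoke the annihilator relation $\overline{\operatorname{ran} S} = {}^{\perp}\!\left(\ker S^*\right)$, valid for any bounded operator on a Banach space. Since the pre-annihilator of a set is all of $X$ exactly when that set is trivial, this gives $\overline{\operatorname{ran} S} = X$ if and only if $\ker S^* = \{0\}$, i.e. if and only if $\lambda$ is not an eigenvalue of $T^*$. Applying this for every $\lambda$ yields the stated equivalence, so it remains only to show that the adjoint of a hypercyclic operator has no eigenvalues.

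For the substantive part, suppose toward a contradiction that $\lambda \in \mathbb{K}$ is an eigenvalue of $T^*$, with a nonzero eigenfunctional $\phi \in X^*$ satisfying $T^*\phi = \lambda \phi$. Let $x \in HC(T)$ be a hypercyclic vector. The key identity, obtained by iterating the eigenvalue equation, is
\begin{align*}
\phi(T^n x) = (T^{*n}\phi)(x) = \lambda^n \phi(x), \qquad n = 0, 1, 2, \dots.
\end{align*}
Because $\phi$ is nonzero and continuous it is surjective onto $\mathbb{K}$ and carries dense sets to dense sets (given $c \in \mathbb{K}$, choose $x_0$ with $\phi(x_0)=c$ and a sequence drawn from the dense orbit converging to $x_0$, then use continuity). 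Hence the density of the orbit $\{T^n x : n \ge 0\}$ in $X$ forces the scalar set $\{\lambda^n \phi(x) : n \ge 0\}$ to be dense in $\mathbb{K}$.

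Finally I would rule this out by an elementary analysis of the geometric sequence. If $\phi(x) = 0$ the set collapses to $\{0\}$, which is not dense in $\mathbb{K}$. If $\phi(x) \neq 0$, density of $\{\lambda^n \phi(x)\}$ is equivalent to density of $\{\lambda^n : n \ge 0\}$; but the moduli $\{|\lambda|^n\}$ form a monotone or constant sequence that tends to $0$, stays equal to $1$, or tends to $\infty$ according as $|\lambda| < 1$, $|\lambda| = 1$, or $|\lambda| > 1$, and in no case is $\{|\lambda|^n\}$ dense in $[0, \infty)$. Thus $\{\lambda^n\}$ cannot be dense in $\mathbb{K}$, the desired contradiction. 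I expect this modulus case-check to be the only delicate point, and it is mild; the genuine content of the argument is the eigenvalue identity together with the observation that a nonzero continuous functional transports orbit-density into the scalar field.
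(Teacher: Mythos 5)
Your proof is correct, and it is essentially the standard argument: the paper itself quotes this result from \cite{LinearChaos} without proof, and the proof given there proceeds exactly as you do, via the identity $\phi(T^n x) = \lambda^n \phi(x)$, the observation that a nonzero continuous functional maps the dense orbit onto a dense subset of $\mathbb{K}$, and the impossibility of $\{\lambda^n \phi(x)\}$ being dense because its moduli form a monotone sequence; the equivalence with dense range of $T - \lambda I$ is the same annihilator duality $\overline{\operatorname{ran}(T-\lambda I)} = {}^{\perp}\ker(T^* - \lambda I)$. Nothing to correct.
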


In fact, this theorem still holds for unbounded densely defined operators. In \cite{emamirad2005chaotic}, H. Emamirad and G.S. Heshmati characterized unbounded chaotic backward
shift operators on $\ell^2$ in the following terms:
\begin{thm}
    Let $T$ be an unbounded backward shift operator on  $\ell^2$. Then the following assertions are equivalent:\\
    \indent $(1)$  $T$ is chaotic.\\
    \indent $(2)$  $T$ has a nontrivial periodic point.\\
    \indent $(3)$  the positive series
    \begin{align*}
               \sum_{n=1}^{\infty}{\prod_{j=0}^{n-1}{\frac{1}{\left| w_j \right|^2}<+\infty}}.
           \end{align*}
\end{thm}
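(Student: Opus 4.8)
The plan is to establish the cycle of implications $(1)\Rightarrow(2)\Rightarrow(3)\Rightarrow(1)$, writing $T(x_n)_n=(w_nx_{n+1})_n$ and assuming throughout that the weights are nonzero (forced anyway by the form of $(3)$). The implication $(1)\Rightarrow(2)$ is immediate from Definition~\ref{chaoxunhuandingyi}: a chaotic operator has a dense set of periodic points, and since $\ell^2$ is infinite dimensional this set cannot reduce to $\{0\}$, so a nontrivial periodic point exists.

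For $(2)\Rightarrow(3)$ the idea is to extract from an arbitrary periodic vector a genuine eigenvector whose eigenvalue is a root of unity. Suppose $f\neq 0$ satisfies $T^Nf=f$. For each $N$-th root of unity $\zeta$ set $P_\zeta f=\frac1N\sum_{k=0}^{N-1}\zeta^{-k}T^kf$; using $T^Nf=f$ one checks that $T(P_\zeta f)=\zeta\,P_\zeta f$ and that $f=\sum_\zeta P_\zeta f$ (the root-of-unity character sum collapses to $f$), so at least one $P_\zeta f$ is a nonzero eigenvector with $|\zeta|=1$. All the domain manipulations are legitimate because $f\in D(T^N)$. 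Solving the eigenvector recursion $w_ng_{n+1}=\zeta g_n$ yields $|g_n|^2=|g_0|^2\prod_{j=0}^{n-1}|w_j|^{-2}$, and $g\in\ell^2$ is precisely the convergence of the series in $(3)$.

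For $(3)\Rightarrow(1)$ I would argue hypercyclicity and density of periodic points separately. Hypercyclicity follows from the unbounded Hypercyclicity Criterion (Theorem~\ref{wujiepanding}) applied with the dense subspace $Y=\operatorname{span}\{e_n\}$ (on which each $T^n$ is closed) and the weighted forward shift $Se_n=w_n^{-1}e_{n+1}$: then $TSy=y$ on $Y$, we have $T^ny=0$ for $n$ large on each finitely supported $y$, and $\|S^ne_k\|=\prod_{j=k}^{k+n-1}|w_j|^{-1}\to 0$ since convergence of $(3)$ forces its terms, hence these tail products, to vanish. For the periodic points, given a finitely supported $y$ and $N$ larger than its support, define $p$ to coincide with $y$ on coordinates $\le N$ and extend it by the period-$N$ rule $p_{n+N}=p_n\prod_{j=n}^{n+N-1}w_j^{-1}$, so that $T^Np=p$; the approximation error $\|p-y\|^2=\sum_{n>N}|p_n|^2$ is a tail assembled from the convergent series $(3)$ and tends to $0$ as $N\to\infty$, giving a dense set of periodic points.

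The main obstacle I anticipate is the density-of-periodic-points step: one must verify that the periodic extension $p$ genuinely lies in $\ell^2\cap D(T^N)$ and that its tail norm really is dominated by the tail of the series in $(3)$ uniformly enough to drive the error to zero as $N$ grows. The eigenvalue-projection argument in $(2)\Rightarrow(3)$ is conceptually clean but also needs the (routine) justification that the formal manipulations with the unbounded $T$ are valid on the periodic vector $f$, which is where closedness of the $T^n$ and the inclusion $f\in D(T^N)$ are used.
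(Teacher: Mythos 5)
Your proposal is correct, but note first that the paper never proves this statement: it is quoted as background from \cite{emamirad2005chaotic}, and the closest the paper comes is Theorem \ref{lphundun} (together with the periodic-point structure Theorem \ref{zhouqidianbiaoshi}), whose specialization to $\varphi(n)=n+1$, $p=2$ recovers it. Measured against that machinery, your $(3)\Rightarrow(1)$ is essentially the paper's argument in miniature: the same right inverse $Se_n=e_{\varphi(n)}/w_n$ fed into Theorem \ref{wujiepanding}, and your period-$N$ extension $p$ is exactly the combination $\sum_{k\le m}y_k x_{N,k}$ of the paper's basic periodic points (for the backward shift the preimage sums in $x_{N,k}$ are empty, and the orbit-escape Lemma \ref{taoyi} trivializes to ``take $N$ beyond the support''); your tail estimate is sound, and one also gets $T^ip\in\ell^2$ for $i<N$ for free because each such tail is a subseries of $(3)$. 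Where you genuinely diverge is $(2)\Rightarrow(3)$: the paper deduces the series from its structure theorem, using that $wC_\varphi^i(x_N)\in\ell^p$ for every intermediate $i=0,\dots,N-1$ to fill in the residues between multiples of $N$, whereas you compress the period into a single unimodular eigenvalue via the root-of-unity projections $P_\zeta$, and the eigenvector recursion then delivers all indices at once — cleaner, and it isolates the one real hypothesis. That hypothesis is precisely the domain point you flag as routine: both arguments need $T^kf\in\ell^2$ for $k<N$. Under the composition-domain convention of Bès--Chan--Seubert (the source of Definition \ref{chaoxunhuandingyi}) this is automatic, since $D(T^N)\subseteq D(T^k)$; under the paper's maximal coordinatewise domain \eqref{dingyi2n} it is \emph{not}: taking $w_{2i-1}w_{2i}=4$ with $w_{2i}=16^i$, the vector $f$ supported on odd coordinates with $f_{2m+1}=4^{-m}$ satisfies $f\in\ell^2$ and $T^2f=f$ formally, yet $\bigl((Tf)_{2m}\bigr)=\bigl(4^m\bigr)\notin\ell^2$ and the series in $(3)$ diverges, so under that convention $(2)\Rightarrow(3)$ actually fails. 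So your parenthetical justification is not bookkeeping but the exact point where the convention matters (the paper's own necessity proof leans on the same implicit assumption); with the Bès--Chan--Seubert convention made explicit, your proof is complete.
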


Based on the above theorems, we extend theorem \ref{youjieweiyiwei} to unbounded weighted pseudo-shift in section 2. And in section 3, we give a characterization of the chaotic weighted pseudo-shift operator on $\ell^p$ as follows:
\begin{thm}\label{hundunchongyao0}
         Let $wC_{\varphi} $~be an unbounded weighted pseudo-shift operator on $\ell^p$, then $wC_{\varphi} $ is chaotic if and only if the following assertions hold:  \\
        \indent $(1)$ $\varphi $ is injective.\\
        \indent $(2)$  $\varphi ^n$ has no fixed point for any $n \in \mathbb{N} $.\\
        \indent $(3)$  $\forall k \in \N$, the following inequalities hold
        \begin{equation*}
        \sum_{n=1}^{+\infty}{\frac{1}{\left| w_kw_{\varphi \left( k \right)}...w_{\varphi ^{n-1}\left( k \right)} \right|^p}}<+\infty ,
        \end{equation*}
        and
        \begin{equation*}
        \sum_{n=1}^{+\infty}{\left| w_{\varphi ^{-1}\left( k \right)}w_{\varphi ^{-2}\left( k \right)}...w_{\varphi ^{-n}\left( k \right)} \right|}^p<+\infty.
        \end{equation*}
    \end{thm}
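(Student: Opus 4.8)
The plan is to prove the two implications separately, writing $T=\wci$ and abbreviating the two products in $(3)$ as
\[
W_n(k)=w_kw_{\varphi(k)}\cdots w_{\varphi^{n-1}(k)},\qquad V_n(k)=w_{\varphi^{-1}(k)}w_{\varphi^{-2}(k)}\cdots w_{\varphi^{-n}(k)},
\]
so that $(3)$ reads $\sum_{n\ge1}|W_n(k)|^{-p}<\infty$ and $\sum_{n\ge1}|V_n(k)|^p<\infty$ for every $k$. A direct induction gives the two formulas on the canonical vectors that drive everything: $T^m e_k=V_m(k)\,e_{\varphi^{-m}(k)}$ when $\varphi^{-m}(k)$ is defined (and $0$ otherwise), and $S^m e_k=W_m(k)^{-1}e_{\varphi^m(k)}$, where $S$ is the weighted forward pseudo-shift $(Sx)_m=x_{\varphi^{-1}(m)}/w_{\varphi^{-1}(m)}$ for $m\in\varphi(\N)$ and $(Sx)_m=0$ otherwise. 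Throughout I take $Y=\mathrm{span}\{e_k\}$, which is dense, contained in $D(T^\infty)$ (each $T^m e_k$ is a single scaled basis vector), and invariant under $S$.

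For sufficiency I would first obtain hypercyclicity from Theorem~\ref{wujiepanding}. Injectivity of $\varphi$ makes $S$ well defined and $(1)$ of the criterion, $TSy=y$, is immediate from $TSe_k=e_k$; since $\|S^m e_k\|=|W_m(k)|^{-1}$ and $\|T^m e_k\|=|V_m(k)|$ (or $0$), the summability in $(3)$ forces both $S^m e_k\you 0$ and $T^m e_k\you 0$, giving $(2)$ and $(3)$ of the criterion, while closedness of each $T^m$ is the standard fact for pseudo-shifts. For the dense set of periodic points I would, for fixed $k$ and large $N$, build the period-$N$ vector supported on $\{\varphi^{jN}(k):j\in\Z\}$ by
\[
f_{k,N}=\sum_{j\ge0}W_{jN}(k)^{-1}e_{\varphi^{jN}(k)}+\sum_{j\ge1}V_{jN}(k)\,e_{\varphi^{-jN}(k)}=\sum_{j\ge0}S^{jN}e_k+\sum_{j\ge1}T^{jN}e_k,
\]
one checks $T^Nf_{k,N}=f_{k,N}$ coordinatewise, and $\|f_{k,N}-e_k\|^p$ equals the tail $\sum_{m\ge N}|W_m(k)|^{-p}+\sum_{m\ge N}|V_m(k)|^p$ of the convergent series in $(3)$, hence $\you 0$ as $N\you\infty$. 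Since the periodic points form a linear subspace (take a common period, resp. the least common multiple), finite combinations of the $f_{k,N}$ approximate every element of $Y$, so the periodic points are dense and $T$ is chaotic.

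For necessity, chaoticity forces hypercyclicity, so $(1)$ and $(2)$ follow from the unbounded analogue of Theorem~\ref{youjieweiyiwei} proved in Section~2. The crux is $(3)$, and here I would \emph{not} argue from a single periodic point directly, since a periodic point may be supported on one residue class mod $N$ and then only yields the sparse subseries $\sum_j|W_{jN}(k)|^{-p}$ rather than the full series. Instead, passing to $\C$ (complexify if $\mathbb K=\R$), I would exploit that eigenvectors are fully supported along orbits: if $Tx=\lambda x$ then $x_{\varphi^r(k)}=\lambda^rW_r(k)^{-1}x_k$ and $x_{\varphi^{-r}(k)}=\lambda^{-r}V_r(k)x_k$, so for $|\lambda|=1$ and $x_k\ne0$ the membership $x\in\ell^p$ is \emph{exactly} the pair of full series in $(3)$. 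Given $k$, density yields a periodic point $f$ with $f_k\ne0$; applying the spectral projections $P_\lambda=\tfrac1N\sum_{i=0}^{N-1}\lambda^{-i}T^i$ (well defined on $f\in D(T^\infty)$, as the orbit of a periodic vector cycles) gives $f=\sum_{\lambda^N=1}P_\lambda f$ with $TP_\lambda f=\lambda P_\lambda f$, whence some $v:=P_\lambda f$ is an $\ell^p$ eigenvector with $|\lambda|=1$ and $v_k\ne0$. The bound $\|v\|_p^p\ge|v_k|^p\big(\sum_{r\ge0}|W_r(k)|^{-p}+\sum_{r\ge1}|V_r(k)|^p\big)$ then delivers $(3)$.

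The main obstacle is precisely this last passage: the naive reading of $(3)$ off a periodic point fails because periodicity only relates coordinates along $\varphi^N$-strides, so densification near $e_k$ controls a single residue class and never the whole series; moreover the unbounded setting (weights need not be bounded below, so a sparse subseries need not dominate the full one) rules out the shortcut available for bounded shifts. The resolution above — replacing periodic points by the root-of-unity eigenvectors they generate, which are automatically nonvanishing on the entire orbit — is the step I expect to require the most care, together with the routine but necessary checks that each $T^m$ is closed and that the vectors $f_{k,N}$, $v$, and the orbit points $\varphi^{\pm m}(k)$ are handled correctly when a backward orbit terminates.
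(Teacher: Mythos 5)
Your proposal is correct, and its sufficiency half is essentially the paper's own proof: the same right inverse $S$ fed into Theorem \ref{wujiepanding}, and your $f_{k,N}$ are exactly the periodic vectors $x_{N,k}$ of Theorem \ref{zhouqidianbiaoshi}; writing, as you do, $W_m(k)=w_k\cdots w_{\varphi^{m-1}(k)}$ and $V_m(k)=w_{\varphi^{-1}(k)}\cdots w_{\varphi^{-m}(k)}$, your tail bound $\lVert f_{k,N}-e_k\rVert^p\le\sum_{m\ge N}\left(|W_m(k)|^{-p}+|V_m(k)|^p\right)$ (a subsum of the tail, so ``$\le$'' rather than your ``equals'') even makes the paper's escape Lemma \ref{taoyi} dispensable, since the error estimate is a norm statement independent of where the tail is supported. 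Where you genuinely diverge is necessity, and there your diagnosis of the ``main obstacle'' misreads the direct approach: the paper does not read condition $(3)$ off the periodic point $x_N$ itself (which, as you say, would only give the sparse stride-$N$ subseries), but off all the iterates $(wC_\varphi)^i(x_N)\in\ell^p$ for $i=0,1,\dots,N-1$; these sweep every residue class modulo $N$ and yield the full pair of series at each generator $k\in G_N$, after which a one-line comparison along the orbit ($l=\varphi^m(g(l))$, via Lemma \ref{Glian}) transfers $(3)$ to every $l\in\N$. Your alternative --- spectral projections $P_\lambda=\frac{1}{N}\sum_{i=0}^{N-1}\lambda^{-i}T^i$ producing a unimodular eigenvector nonvanishing along the entire bi-orbit --- is valid and buys $(3)$ at the given $k$ directly, with no generator-translation step; but it costs a complexification when $\mathbb{K}=\mathbb{R}$, and, more to the point, it rests on exactly the same hypothesis the direct argument needs, namely $T^if\in\ell^p$ for $0\le i<N$ (your parenthetical ``$P_\lambda$ is well defined on $f\in D(T^\infty)$''): granted that, the paper's extraction is immediate, and without it your $P_\lambda f$ is undefined too, so the eigenvector machinery does not remove the one genuinely delicate point. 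That point is real but shared by both proofs: under the compositional convention for powers of an unbounded operator one has $D(T^N)\subseteq D(T^i)$, so periodic points lie in $D(T^\infty)$ automatically, whereas under the paper's formal definition of $D\left((wC_\varphi)^n\right)$ in Section 2 this inclusion is not automatic and deserves an explicit remark in either write-up. Finally, you are right --- and more careful than the paper --- that one must first invoke density of periodic points to obtain one with nonzero $k$-th coordinate before extracting any series at $k$.
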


    In section 3, we also extend the above result to the space $c_0$.

\section{Hypercyclicity}\label{S2}
In \cite{Hypercyclicity},  the necessary and sufficient condition for the hypercyclicity of bounded weighted pseudo-shift operators on $\ell^p$ is given. In this section, we extend it to the unbounded case. If the operator $wC_\fai$ is unbounded, then $\underset{l\in \mathbb{N}}{\sup}\sum_{k\in \varphi ^{-1}\left( l \right)}{\left| w_k \right|^p <\infty}$. We will see in below that the hypercyclicity of $wC_\fai$ implies that $\varphi$ is injective,i.e, $\underset{n\in \mathbb{N}}{\sup}~card\left\{ \varphi ^{-1}\left( n \right) \right\} =1$. So for the unbounded operator $wC_\fai$, we only concern the case when $\sup_{n\in \mathbb{N}}\left| w\left( n \right) \right|=\infty$. Denote
\begin{align*}
        e_{n} = \left( \underset{n - 1 }{\underbrace{0,\cdots ,0}}, 1, 0,\cdots \right) ,  \ n\in \N.
\end{align*}
and
\begin{align*}
    c_{00}=\left\{ x=\left( x_k \right) _{k\in \mathbb{N}}\in \mathbb{K}^{\mathbb{N}}\left| \exists N\in \mathbb{N},\ s.t.\ \forall k\ge N,\ x_k=0 \right. \right\}
\end{align*}
the space $c_{00}$ is dense in $\ell^p$ or $c_0$, we will always use this notation below. Let $\left\{ w_k \right\} _{k\in \mathbb{N}}$ be an arbitrary weight sequence, we define the unbounded weighted pseudo-shift $wC_\fai$ on $\ell^p$ by
\begin{align}
      wC_\fai (\sum_{k=1}^{\infty}{x_ke_k}) = \sum_{k=1}^{\infty}{w_k  x_{\fai (k)} e_k } 
    \end{align}
 where  $\underset{l\in \mathbb{N}}{\sup}\sum_{k\in \varphi ^{-1}\left( l \right)}{\left| w_k \right|^p <\infty}$, with the domain
\begin{align}
        D(wC_{\fai} ) = \left\{ x \in l^p \left |  \sum_{k=1}^{\infty}{w_k  x_{\fai(k)} e_k } \in l^p \right. \right\}  \text{.}
 \end{align}
By the definition of $wC_\fai$, 
\begin{align}
      \wcn (\sum_{k=1}^{\infty}{x_ke_k}) = \sum_{k=1}^{\infty}{w_k w_{\fai(k)} \cdots w_{\fai^{n-1} (k)} x_{\fai^{n} (k)} e_k } 
        \label{dingyi1n}
    \end{align}
   where  $\underset{l\in \mathbb{N}}{\sup}\sum_{k\in \varphi ^{-1}\left( l \right)}{\left| w_k \right|^p <\infty}$, with the domain
    \begin{align}
        D(wC_{\fai} ^n) = \left\{ x \in l^p \left |  \sum_{k=1}^{\infty}{w_k w_{\fai(k)} \cdots w_{\fai^{n-1} (k)} x_{\fai^{n} (k)} e_k } \in l^p \right. \right\}  \text{.}
        \label{dingyi2n}
    \end{align}
for all $n \in \mathbb{N}$.

Before proving the necessary and sufficient condition for the hypercyclicity of unbounded operator $wC_\fai$ on $\ell^p$, we need to show some lemmas.
\begin{lem}\label{faiwujie}
        Suppose map $\varphi :\ \mathbb{N}\rightarrow \mathbb{N}$ is injective and $\varphi ^n$ has no fixed point for any $n \in \mathbb{N} $. Then\\
        \indent $(1)$  $\forall n \in \N $ ,  $\fai^n (k) \you \infty \  (k \you \infty)$.\\
        \indent $(2)$  $\forall k \in \N $ ,  $\fai^n (k) \you \infty \  (n \you \infty)$.
\end{lem}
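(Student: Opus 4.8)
The plan is to reduce both statements to a single elementary observation about sequences of distinct natural numbers, and then to supply the required distinctness from the two hypotheses on $\fai$. The observation is: \emph{if $(a_j)_j$ is a sequence of pairwise distinct natural numbers, then $a_j \you \infty$.} Indeed, for any bound $M \in \N$ there are only $M$ natural numbers not exceeding $M$, so by distinctness at most $M$ of the terms can satisfy $a_j \le M$; hence the set $\{j : a_j \le M\}$ is finite, and $a_j > M$ for all sufficiently large $j$. Since $M$ is arbitrary, $a_j \you \infty$. This is the only quantitative input in the whole argument.

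For part $(1)$ I would first note that $\fai^n$ is injective for every $n$, being a composition of injective maps. Fixing $n$ and letting $k$ vary, the values $\fai^n(1), \fai^n(2), \dots$ are therefore pairwise distinct, so the observation above, applied to $a_k = \fai^n(k)$, gives $\fai^n(k) \you \infty$ as $k \you \infty$. Note that this part uses only the injectivity of $\fai$; the hypothesis on fixed points plays no role here.

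For part $(2)$ I would fix $k$ and consider the forward orbit $(\fai^n(k))_{n \ge 0}$. The key step, and the place where the no-fixed-point hypothesis enters, is to show that the terms of this orbit are pairwise distinct. Suppose $\fai^i(k) = \fai^j(k)$ with $0 \le i < j$. Writing $\fai^j(k) = \fai^i(\fai^{j-i}(k))$ and invoking the injectivity of $\fai^i$, I may cancel to obtain $k = \fai^{j-i}(k)$; that is, $k$ is a fixed point of $\fai^{j-i}$ with $j-i \ge 1$, contradicting the assumption that $\fai^m$ has no fixed point for any $m \in \N$. Hence the $\fai^n(k)$ are pairwise distinct, and the observation, applied to $a_n = \fai^n(k)$, yields $\fai^n(k) \you \infty$ as $n \you \infty$.

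The main obstacle is precisely the distinctness argument in part $(2)$, where both hypotheses are genuinely required: injectivity is used to perform the cancellation, while the absence of periodic points is what rules out any repetition in the orbit. Once distinctness is established, both limits follow immediately from the counting observation, and no further estimates are needed.
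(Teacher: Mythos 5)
Your proof is correct and takes essentially the same approach as the paper: both arguments rest on the injectivity of the iterates $\varphi^n$ together with the finiteness of $\{1,\dots,M\}$ (your counting observation for sequences of distinct naturals is exactly the paper's pigeonhole step applied to a bounded subsequence), and in part $(2)$ both derive the distinctness of the forward orbit from the no-fixed-point hypothesis. If anything, your explicit cancellation $\varphi^i(k)=\varphi^j(k)\Rightarrow k=\varphi^{j-i}(k)$ spells out a step the paper's part $(2)$ leaves implicit when it passes from $\varphi^{m_i}(k_0)=\varphi^{m_j}(k_0)$ to a contradiction with the absence of fixed points.
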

\begin{proof}
(1). Firstly, we show that $\varphi ^n$ is injective for any $n \ge 1$. Assume $\varphi ^{k-1}$ is injective, if there exists $i,j \in \mathbb{N}$ such that $\fai ^k (i) = \fai ^k (j)$~i.e $\fai(\fai ^{k-1} (i)) = \fai(\fai ^{k-1} (j))$. Since $\varphi$ is injective, then $\fai ^{k-1} (i) = \fai ^{k-1} (j)$. So by the assumption, $i=j$. Thus by induction, it follows that $\forall n \in \N$, $\fai ^n$ is injective.

 Conversely, suppose there exists $n_0 \in \mathbb{N}$ such that $\varphi ^{n_0}\left( k \right) \nrightarrow \infty \ as\ k\rightarrow \infty $, that is there exists a bounded subsequence $\{ \fai^{n_0}(m_k)\}_{k \in \N}$ of $\left\{ \fai^{n_0}(k) \right\}_{k\in \mathbb{N}}$. Suppose there exists $M \in \mathbb{N}$ such that $\forall k \in \N , \ \fai^{n_0}(m_k) \le M $. Consider the following set
 $$
    \{ \fai^{n_0}(m_1)\text{, } \fai^{n_0}(m_2)\text{, } \cdots \text{, } \fai^{n_0}(m_M)\text{, } \fai^{n_0}(m_{M+1}) \}
 $$
Note that each $\fai^{n_0}(k)$ is a positive integer, by the pigeonhole principle, there must exist positive integer $i,j \le M+1 \left( i\ne j \right)  $ such that $\fai ^{n_0} (m_i) = \fai ^{n_0} (m_j)$, which is a contradiction to the fact that $\fai ^{n_0}$ is injective.

\noindent (2). Conversely, suppose there exists $k_0 \in \mathbb{N}$ such that $\varphi ^{n}\left( k_0 \right) \nrightarrow \infty \ as\ n\rightarrow \infty $, that is there exists a bounded subsequence $\{ \fai^{m_k}(k_0)\}_{k \in \N}$ of $\left\{ \fai^{n}(k_0) \right\}_{n\in \mathbb{N}}$. Suppose there exists $M\in \mathbb{N}$ such that $\forall k \in \N , \ \fai^{m_k}(k_0) \le M $. Consider the following set
$$
    \{ \fai^{m_1} (k_0)\text{, } \fai^{m_2} (k_0)\text{, } \cdots \text{, } \fai^{m_M} (k_0)\text{, } \fai^{m_{M+1}} (k_0) \}
$$
By pigeonhole principle, there must exists positive integer $i,j \le M+1 \left( i\ne j \right)  $ such that $\fai ^{m_i} (k_0) = \fai ^{m_j} (k_0)$. This contradicts the fact that $\varphi ^n$ has no fixed point for any $n \in \mathbb{N} $.
\end{proof}

\begin{lem}\label{dingyiyu}
         Suppose map $\varphi :\ \mathbb{N}\rightarrow \mathbb{N}$ is injective and $\varphi ^n$ has no fixed point for any $n \in \mathbb{N} $. Then the subspace
        $$
        D\left( \left[ wC_{\fai}\right]^{\infty}  \right) \ =\ \bigcap_{n\ge 1}{ D\left( \left[ wC_{\fai} \right] ^n \right)}
        $$
        is dense in $\ell^p$.
\end{lem}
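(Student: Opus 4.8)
The plan is to exhibit an explicit dense subspace that is contained in $D\left([wC_{\fai}]^\infty\right)$, namely the space $c_{00}$ of finitely supported sequences, which has already been noted to be dense in $\ell^p$. Since each $D\left([wC_{\fai}]^n\right)$ is a linear subspace (being the domain of the linear operator $[wC_{\fai}]^n$), their intersection is again a subspace, and so it suffices to prove that every basis vector $e_m$ lies in $D\left([wC_{\fai}]^n\right)$ for all $n\in\N$. Once this is established, $c_{00}=\mathrm{span}\{e_m:m\in\N\}\subseteq D\left([wC_{\fai}]^\infty\right)$, and density follows immediately from the density of $c_{00}$.

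To carry this out I would fix $m,n\in\N$ and compute $[wC_{\fai}]^n e_m$ directly from the formula~\eqref{dingyi1n}. Writing $e_m=\sum_k x_k e_k$ with $x_k=\delta_{km}$, the coefficient of $e_k$ in $[wC_{\fai}]^n e_m$ equals $w_k w_{\fai(k)}\cdots w_{\fai^{n-1}(k)}\,x_{\fai^n(k)}$, which is nonzero only when $\fai^n(k)=m$. Hence the support of the image vector $[wC_{\fai}]^n e_m$ is contained in the preimage set $\{k\in\N:\fai^n(k)=m\}$.

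The decisive point is that this preimage is finite --- indeed either empty or a single point. Since $\fai$ is injective, $\fai^n$ is injective for every $n\ge 1$ (this is exactly the induction carried out in the proof of Lemma~\ref{faiwujie}), so there is at most one $k$ with $\fai^n(k)=m$. Consequently $[wC_{\fai}]^n e_m$ is either $0$ or a single scalar multiple of one basis vector, and in either case it clearly belongs to $\ell^p$. Therefore $e_m\in D\left([wC_{\fai}]^n\right)$ for every $n\in\N$, whence $e_m\in D\left([wC_{\fai}]^\infty\right)$, which completes the reduction.

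I expect the only genuine subtlety to be the role of the injectivity hypothesis: it is precisely what guarantees finiteness of the preimage $\{k:\fai^n(k)=m\}$, and hence that the image of a finitely supported vector remains finitely supported (and \emph{a fortiori} in $\ell^p$). Without injectivity, a single $e_m$ could be mapped by some power of $wC_{\fai}$ to an infinite series that need not converge in $\ell^p$, and the same finitely supported vectors would no longer lie in every domain. The assumption that $\fai^n$ has no fixed point is not actually required for this density statement; it is simply carried along from the standing hypotheses.
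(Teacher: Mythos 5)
Your proposal is correct and follows the same overall strategy as the paper: show that $c_{00}$ lies inside every domain $D\left(\left[wC_{\varphi}\right]^n\right)$ and then invoke the density of $c_{00}$ in $\ell^p$. The one substantive difference is how the finite support of the image is certified. The paper takes an arbitrary $x\in c_{00}$ and applies Lemma~\ref{faiwujie}\,(1) (that $\varphi^n(k)\to\infty$ as $k\to\infty$) to conclude $x_{\varphi^n(k)}=0$ for all large $k$; you instead work with basis vectors and count preimages directly: injectivity of $\varphi^n$ gives at most one $k$ with $\varphi^n(k)=m$, so $\left[wC_{\varphi}\right]^n e_m$ is zero or a scalar multiple of a single basis vector, and linearity of the domains finishes the reduction. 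Your version is marginally more elementary, since it bypasses the pigeonhole argument behind Lemma~\ref{faiwujie}\,(1). Your closing observation is also accurate: the hypothesis that $\varphi^n$ has no fixed points plays no role in this density statement, and this is consistent with the paper, since the proof of part (1) of Lemma~\ref{faiwujie} --- the only ingredient the paper uses here --- relies on injectivity alone.
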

\begin{proof}
$\forall x=\sum_{k=1}^{\infty}{x_k e_k} \in c_{00}$, there exists $ N_1>0$ such that $\forall k>N_1,\ x_k=0$. Since $\varphi$ is injective, by Lemma \ref{faiwujie}  (1), there exists $ N_2>0$ such that $ \forall k>N_2,\ \fai^n(k)>N_1$. Take $N=\max \left\{ N_1,N_2 \right\}$, then $\forall k > N$, $x_k = x_{\fai^n (k)} = 0$.

Hence, $\forall n\in \N$
    \begin{equation*}
        \wcn (x) = \sum_{k=1}^{\infty}{w_k w_{\fai(k)} \cdots w_{\fai^{n-1} (k)} x_{\fai^{n} (k)} e_k } .
    \end{equation*}
has only finite term coefficients that are not zero, which implies $\wcn (x)\in l^p$. So $c_{00}\subset D\left( \left[  w C_{\fai} \right] ^n \right) \subset l^p$, and in view of the denseness of $c_{00}$ in $\ell^p$, it follows that $D\left( \left[  w C_{\fai} \right] ^n \right)$ is dense in $\ell^p$. By the arbitrary of $n$, the conclusion is proven.
\end{proof}

This lemma legitimates the research of hypercyclicity of the unbounded operator $wC_\fai$ in the framework of Definition \ref{chaoxunhuandingyi}. And in order to use the hypercyclicity criterion \ref{wujiepanding}, we have to ensure that for each $n\in \mathbb{N}$, the operator $\wcn$ is closed.
\begin{lem}\label{closed}
    $\forall n\in \mathbb{N}$,  the operator $\wcn$ defined by $\left(\ref{dingyi1n}\right)$ and $\left(\ref{dingyi2n}\right)$ is closed.
\end{lem}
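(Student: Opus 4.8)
The plan is to prove closedness straight from the sequential definition: I must show that whenever a sequence $(x^{(m)})_{m\in\N}$ in $D(\wcn)$ satisfies $x^{(m)}\you x$ and $\wcn x^{(m)}\you y$ in $\lpi$, then $x\in D(\wcn)$ and $\wcn x = y$. The crucial and essentially only nontrivial ingredient is that norm convergence in $\lpi$ forces coordinatewise convergence; once this is in hand the argument reduces to tracking a single coordinate at a time, with no need to control the whole series uniformly.

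First I would fix $n$ and record that for each fixed index $k$ the scalar $c_k := w_k w_{\fai(k)}\cdots w_{\fai^{n-1}(k)}$ is a product of finitely many weights, hence a finite constant, notwithstanding that $\sup_{n}|w_n|=\infty$. Writing $x^{(m)}=\sum_k x^{(m)}_k e_k$ and $x=\sum_k x_k e_k$, the convergence $x^{(m)}\you x$ in $\lpi$ gives $|x^{(m)}_j - x_j|\le \|x^{(m)}-x\|_p\you 0$ for every $j$, so in particular $x^{(m)}_{\fai^n(k)}\you x_{\fai^n(k)}$ as $m\you\infty$ for each $k$.

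Next I would examine the $k$-th coordinate of $\wcn x^{(m)}$, which by $(\ref{dingyi1n})$ equals $c_k\, x^{(m)}_{\fai^n(k)}$. Since $c_k$ is a fixed finite scalar, this converges to $c_k\, x_{\fai^n(k)}$. On the other hand, $\wcn x^{(m)}\you y$ in $\lpi$ forces the $k$-th coordinate of $\wcn x^{(m)}$ to converge to $y_k$. Uniqueness of limits then yields $y_k = c_k\, x_{\fai^n(k)}$ for every $k$, that is, the formal sequence $\sum_k c_k\, x_{\fai^n(k)} e_k$ coincides with $y$.

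Finally, since $y\in\lpi$ by hypothesis and $\sum_k c_k\, x_{\fai^n(k)} e_k = y$, the defining condition $(\ref{dingyi2n})$ for membership in $D(\wcn)$ is satisfied, whence $x\in D(\wcn)$ and $\wcn x = y$. This is precisely the closedness of $\wcn$, and by the arbitrariness of $n$ the lemma follows. I do not anticipate a genuine obstacle; the one point requiring care is to insist that each weight product $c_k$ is a bona fide finite constant so that the coordinatewise limit passes through it. This is exactly where the unboundedness of the operator, manifested in $\sup_n|w_n|=\infty$, might superficially appear troublesome, but for any fixed coordinate it causes no difficulty at all.
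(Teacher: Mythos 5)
Your proposal is correct and follows essentially the same route as the paper: both proofs rest on the fact that norm convergence in $\ell^p$ implies coordinatewise convergence, apply it to $x^{(m)}\to x$ and to $\wcn x^{(m)}\to y$, and use that each weight product $w_k w_{\fai(k)}\cdots w_{\fai^{n-1}(k)}$ is a fixed finite scalar to identify $y_k$ with the $k$-th coordinate of the formal image of $x$, from which $x\in D(\wcn)$ and $\wcn x=y$ follow by the definition of the domain. If anything, your write-up is a touch cleaner than the paper's, which states the coordinatewise identification through a slightly misleading displayed sum.
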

\begin{proof}
suppose that as $j \rightarrow \infty$,
\begin{subequations}
    \begin{align}
        D\left( \left[ wC_{\fai} \right] ^n \right) \owns x_j = \sum_{k=1}^{\infty}{c_{k,j} e_k} &\rightarrow \sum_{k=1}^{\infty}{c_k^* e_k} = x^*\in l^p .
        \label{1shi}\\
        \wcn (x_j) &\rightarrow  \sum_{k=1}^{\infty}{d_k^* e_k} = f^* \in l^p.
        \label{2shi}
    \end{align}
    \end{subequations}
by $ \left( \ref{1shi} \right) $, we infer that $\forall k \in \N$ , $c_{k,j} \you c_k^* $ as $j \rightarrow \infty $. So
\begin{align*}
        \sum_{k=1}^{\infty}{w_k w_{\fai(k)} \cdots w_{\fai^{n-1} (k)} c_{\fai^{n} (k), j}} \you \sum_{k=1}^{\infty}{w_k w_{\fai(k)} \cdots w_{\fai^{n-1} (k)} c_{\fai^{n} (k)}^* } \  .
    \end{align*}
by $ \left( \ref{2shi} \right) $, 
\begin{align*}
        \wcn (x_j) = \sum_{k=1}^{\infty}{w_k w_{\fai(k)} \cdots w_{\fai^{n-1} (k)} c_{\fai^{n} (k), j} e_k } \you f^* = \sum_{k=1}^{\infty}{d_k^* e_k} \  .
    \end{align*}
Hence, 
    $$
    {w_k w_{\fai(k)} \cdots w_{\fai^{n-1} (k)} c_{\fai^{n} (k), j}} \you d_k^*.
    $$
It follows that
$$
        {w_k w_{\fai(k)} \cdots w_{\fai^{n-1} (k)} c_{\fai^{n} (k)}^*} = d_k^*.
$$
Thus,
\begin{align*}
        \wcn x^* &= \wcn (\sum_{k=1}^{\infty}{c_k^* e_k})\\
        &= \sum_{k=1}^{\infty}{w_k w_{\fai(k)} \cdots w_{\fai^{n-1} (k)} c_{\fai^{n} (k)}^* e_k }\\
        &= \sum_{k=1}^{\infty}{d_k^* e_k} = f^*.
\end{align*}
This implies that $x^*\in D\left( \left[ wC_{\fai} \right] ^n \right)$ and $\wcn x^* = f^*$. So we can conclude that the operator $\wcn$ is closed for each $n\in \mathbb{N}$. 
\end{proof}

\begin{thm}\label{danshe}
        Suppose $\forall n\in \mathbb{N}$, $\ w(n) \ne 0 $. If the unbounded weighted pseudo-shift operator $wC_\fai$ on $\ell^p$ is hypercyclic, then $\varphi$ is injective.
\end{thm}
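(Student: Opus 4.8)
The plan is to argue by contradiction: assume $\varphi$ is not injective and produce a nonzero bounded linear functional that annihilates the whole range of $wC_\fai$, contradicting the fact that a hypercyclic operator must have dense range.

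First I would extract the consequence of non-injectivity. If $\varphi$ fails to be injective, there are distinct indices $m,m'\in\N$ with $\varphi(m)=\varphi(m')=:l$. Reading off the defining formula for $wC_\fai$, the $m$-th and $m'$-th coordinates of $wC_\fai x$ are $w_m x_l$ and $w_{m'} x_l$, so they are proportional: for every $x\in D(wC_\fai)$ one computes
$w_{m'}(wC_\fai x)_m - w_m(wC_\fai x)_{m'} = w_{m'}w_m x_l - w_m w_{m'} x_l = 0$.
The point is that a collision of $\varphi$-values forces a fixed linear dependence among the output coordinates, valid for \emph{every} input.

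Next I would package this as a functional. Define $\Lambda:\ell^p\you\mathbb{K}$ by $\Lambda(y)=w_{m'}y_m - w_m y_{m'}$. Since it depends on only two fixed coordinates, $\Lambda$ is bounded, and because $w_m\ne 0$ by the standing hypothesis $w(n)\ne 0$, the functional $\Lambda$ is nonzero; hence $\ker\Lambda$ is a proper closed hyperplane of $\ell^p$. The computation above says precisely that $\mathrm{Ran}(wC_\fai)\subseteq\ker\Lambda$. To conclude I would invoke Theorem \ref{lc2.53}, which the excerpt notes remains valid for unbounded densely defined operators: hypercyclicity of $wC_\fai$ forces $wC_\fai=wC_\fai-0\cdot I$ to have dense range, and a dense range cannot be contained in the proper closed subspace $\ker\Lambda$. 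This contradiction shows $\varphi$ is injective. A self-contained variant avoids Theorem \ref{lc2.53} entirely: for a hypercyclic vector $f$ one has $f\in D((wC_\fai)^{\infty})$, so every orbit point $(wC_\fai)^n f$ with $n\ge 1$ lies in $\mathrm{Ran}(wC_\fai)\subseteq\ker\Lambda$, whence $\overline{orb(f,wC_\fai)}\subseteq\{f\}\cup\ker\Lambda\ne\ell^p$, again contradicting density.

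I expect no serious obstacle here; the proof is short. The only points requiring care are conceptual rather than computational: one must spot the right linear constraint generated by a coordinate collision, and one must be careful that, although $wC_\fai$ itself is unbounded (so no argument may rely on boundedness of the operator), the confining functional $\Lambda$ is nevertheless bounded, so that $\ker\Lambda$ is genuinely closed and the density contradiction is legitimate. Ensuring the invocation of Theorem \ref{lc2.53} (or the orbit argument) is properly justified in the unbounded densely defined setting is the main thing to state cleanly.
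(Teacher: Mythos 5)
Your proof is correct, and it rests on exactly the same observation as the paper's: a collision $\varphi(k_1)=\varphi(k_2)=l$ forces the $k_1$-th and $k_2$-th output coordinates of $wC_\varphi x$ to be proportional (both are multiples of $x_l$), and this is incompatible with the dense range that hypercyclicity guarantees. The two arguments differ only in how that incompatibility is cashed out. The paper argues quantitatively: it approximates the single vector $e_{k_1}$ by range elements $wC_\varphi(x_m)$ to within $1/m$, reads off the two coordinate inequalities $|w_{k_1}x_l-1|\le 1/m$ and $|w_{k_2}x_l|\le 1/m$, and lets $m\to\infty$ to get contradictory limits $|x_l|\to 1/|w_{k_1}|\ne 0$ and $|x_l|\to 0$. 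You argue dually and qualitatively: the bounded functional $\Lambda(y)=w_{k_2}y_{k_1}-w_{k_1}y_{k_2}$ annihilates the entire range, so $\mathrm{Ran}(wC_\varphi)$ sits inside the proper closed hyperplane $\ker\Lambda$ and cannot be dense. Your packaging is slightly cleaner (no $\varepsilon$ bookkeeping, and it makes visible that \emph{every} vector off the hyperplane, not just $e_{k_1}$, witnesses the failure), and your self-contained orbit variant is a nice touch: since a hypercyclic vector $f$ lies in $D\bigl((wC_\varphi)^\infty\bigr)$, all orbit points $(wC_\varphi)^n f$ with $n\ge 1$ lie in $\ker\Lambda$, so the orbit closure is contained in the proper closed set $\{f\}\cup\ker\Lambda$; this bypasses the citation of Theorem \ref{lc2.53} for unbounded operators, which the paper (and your main variant) uses only through the implicit step ``hypercyclic $\Rightarrow$ dense range.'' Both proofs use the standing hypothesis $w(n)\ne 0$ at the same spot — the paper to divide by $|w_{k_1}|$, you to ensure $\Lambda\ne 0$ — so there is no gap on either side.
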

\begin{proof}
Conversely, suppose there exists $k_1, k_2\in \mathbb{N} \left( k_1 \ne k_2\right)$  such that $ \fai(k_1) = \fai (k_2) = l$. Let the operator $\wci$ is hypercyclic, then the range $\text{Im}( wC_{\fai} ) $ is dense in $\ell^p$. So for $e_{k_1}\in l^p$, $\forall m \in \mathbb{N}$ ,  there exists ~a vector $  x_m \in D\left( \left[ wC_{\fai}\right]^{\infty}  \right)$ such that $\ \lVert \wci (x_m) - e_{k_1}\rVert \le \frac{1}{m}$ , that is 
\begin{align*}
        \lVert ( w_1x_{\fai (1)}, \cdots , w_{k_1-1}x_{\fai (k_1-1)}, w_{k_1}x_{\fai (k_1)} - 1, w_{k_1+1}x_{\fai (k_1+1)}, \cdots) \rVert \le \frac{1}{m}.
    \end{align*}
Hence,
\begin{subequations}
        \begin{align}
            |w(k_1)x_{\fai (k_1)} - 1| &\le \frac{1}{m} ,
            \label{shi1}\\
            |w(k_2)x_{\fai (k_2)}| &\le \frac{1}{m} .
            \label{shi2}
        \end{align}
     \end{subequations}
By $\left(\ref{shi1}\right)$,
\begin{align*}
        |x_l|=|x_{\fai (k_1)}|=\ge \frac{1}{|w_{k_1}|} (1-\frac{1}{m}) \you \frac{1}{|w_{k_1}|} \ ( m \you \infty).
\end{align*}
By $\left(\ref{shi2}\right)$,
\begin{align*}
        |x_l|=|x_{\fai (k_2)}|\le \frac{1}{m|w_{k_2}|} \you 0 \ ( m \you \infty).
 \end{align*}
This is a Contradiction, which completes the proof.
\end{proof}

\begin{thm}\label{nofixedpoint}
        If the unbounded weighted pseudo-shift operator $wC_\fai$ on $\ell^p$ is hypercyclic, then $\varphi ^n$ has no fixed point for each $n \in \mathbb{N} $.
\end{thm}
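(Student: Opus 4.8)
The plan is to argue by contraposition through Theorem~\ref{lc2.53}: since $wC_\fai$ is hypercyclic, its adjoint has no eigenvalues, equivalently $wC_\fai-\lambda I$ has dense range for \emph{every} scalar $\lambda$. So I would assume, for contradiction, that $\fai^n(k_0)=k_0$ for some $k_0\in\N$ and some $n\in\N$, and then produce a scalar $\lambda$ together with a nonzero continuous functional that annihilates $\mathrm{Im}(wC_\fai-\lambda I)$, contradicting density. Concretely, the functional will be supported on the finite cycle through $k_0$, which turns the problem into a small cyclic linear system.

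First I would record two reductions. If $w_{n_0}=0$ for some $n_0$, then every vector in $\mathrm{Im}(wC_\fai)$ has vanishing $n_0$-th coordinate, so the range sits in a proper closed subspace and cannot be dense; hence hypercyclicity already forces $w_k\neq 0$ for all $k$. Moreover, by Theorem~\ref{danshe} I may assume $\fai$ is injective. Given the assumed fixed point, let $d$ be the least period, so that $F=\{k_0,k_1,\dots,k_{d-1}\}$ with $k_j=\fai^j(k_0)$ consists of $d$ distinct indices cyclically permuted by $\fai$, i.e.\ $\fai(k_j)=k_{j+1}$ with indices read mod $d$. Injectivity gives the crucial localization property: if $m\notin F$ then $\fai^{-1}(m)\notin F$.

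Next I would look for a functional $\Phi(x)=\sum_{j=0}^{d-1}\overline{\alpha_j}\,x_{k_j}$ on $\ell^p$ (automatically bounded, and nonzero as soon as the $\alpha_j$ are not all zero) and impose $\Phi\big((wC_\fai-\lambda I)x\big)=0$ for all $x\in D(wC_\fai)$. A short computation shows $\Phi(wC_\fai x)=\sum_j\overline{\alpha_j}\,w_{k_j}\,x_{k_{j+1}}$, and since $\fai$ maps $F$ into $F$ only the coordinates $x_{k_j}$ appear; matching the coefficient of each $x_{k_j}$ yields the cyclic system $\overline{\alpha_{j-1}}\,w_{k_{j-1}}=\lambda\,\overline{\alpha_j}$ for $j\in\Z/d\Z$. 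Normalizing $\alpha_0=1$ and solving forward forces the compatibility condition $\lambda^d=w_{k_0}w_{k_1}\cdots w_{k_{d-1}}$; because all weights are nonzero, this product is nonzero, so a nonzero $d$-th root $\lambda$ exists and determines a nonzero vector $(\alpha_j)$. For this $\lambda$ the functional $\Phi$ vanishes on $\mathrm{Im}(wC_\fai-\lambda I)$, so that range is not dense, contradicting Theorem~\ref{lc2.53} (which holds for unbounded densely defined operators); this finishes the proof.

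The hard part will be the bookkeeping that guarantees the functional equation holds at \emph{all} coordinates and not merely on $F$: one must check that the terms coming from $wC_\fai$ never reintroduce coordinates $x_m$ with $m\notin F$, which is exactly where the injectivity of $\fai$ (so that $\fai^{-1}(F^c)\subseteq F^c$) is essential, and that the closing relation $\lambda^d=\prod_j w_{k_j}$ is solvable, which is where the standing nonvanishing of the weights is used. A secondary technical point worth stating carefully is the legitimacy of applying Theorem~\ref{lc2.53} to the unbounded operator $wC_\fai$; since $\Phi$ has finite support it pairs against $wC_\fai x$ through a finite sum, so no domain difficulties arise in identifying $\Phi$ with an eigenvector of the adjoint.
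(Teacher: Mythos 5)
Your proof is correct over the complex scalars and takes a genuinely different route from the paper's. The paper keeps the single coordinate functional $f_l(x)=x_l$ at the fixed point $l$ of $\varphi^n$ and shows it is an eigenvector of $\left( \left( wC_{\varphi}\right)^n\right)^*$ with eigenvalue $w_l w_{\varphi(l)}\cdots w_{\varphi^{n-1}(l)}$; to reach a contradiction with Theorem~\ref{lc2.53} it must then pass from hypercyclicity of $wC_{\varphi}$ to hypercyclicity of $\left( wC_{\varphi}\right)^n$, i.e.\ it silently invokes Ansari's theorem, which for unbounded operators is nontrivial and is asserted in the paper without proof. Your argument instead spreads the functional over the whole least-period cycle $F=\{k_0,\dots,k_{d-1}\}$ and extracts a $d$-th root $\lambda$ of $w_{k_0}\cdots w_{k_{d-1}}$, producing an eigenvector of $\left( wC_{\varphi}\right)^*$ itself, so Theorem~\ref{lc2.53} applies directly to $wC_{\varphi}$ and the appeal to Ansari is avoided entirely --- a real advantage. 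Two smaller points: deriving $w_k\neq 0$ from dense range rather than assuming it is a nice tightening (the paper's Theorem~\ref{danshe} takes $w(n)\neq 0$ as a hypothesis); on the other hand, injectivity of $\varphi$ is not where your localization comes from. The identity $\Phi(wC_{\varphi}x)=\sum_{j}\overline{\alpha_j}\,w_{k_j}x_{k_{j+1}}$ involves only $F$-coordinates simply because $\varphi(F)\subseteq F$ (the cycle is forward invariant); the implication ``$m\notin F\Rightarrow\varphi^{-1}(m)\notin F$'' is just the contrapositive of this and needs no injectivity, so that hypothesis is harmless but superfluous in your bookkeeping step.

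The one genuine gap is the scalar field. The paper allows $\mathbb{K}=\R$, and over $\R$ your closing relation $\lambda^d=w_{k_0}\cdots w_{k_{d-1}}$ has no solution in $\mathbb{K}$ when $d$ is even and the product is negative; Theorem~\ref{lc2.53} as quoted only gives dense range of $wC_{\varphi}-\lambda I$ for $\lambda\in\mathbb{K}$, so you cannot simply pick a complex root. To repair this, either restrict to $\mathbb{K}=\C$, or invoke the stronger fact (Herrero--Bourdon, extended to real scalars by B\`es) that $p(T)$ has dense range for every nonzero polynomial $p$ when $T$ is hypercyclic, applied to $p(t)=t^d-w_{k_0}\cdots w_{k_{d-1}}$: since $\left( \left( wC_{\varphi}\right)^d x\right)_{k_0}=\bigl(\textstyle\prod_{j}w_{k_j}\bigr)x_{k_0}$, the coordinate functional $x\mapsto x_{k_0}$ annihilates $\mathrm{Im}\,p\left( wC_{\varphi}\right)$, giving the contradiction without any root extraction. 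Failing that, one falls back on the paper's device of passing to $\left( wC_{\varphi}\right)^d$, whose adjoint has the real eigenvalue $\prod_{j}w_{k_j}$ --- at the cost of again needing hypercyclicity of powers.
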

\begin{proof}
Let $wC_\fai$ be hypercyclic.  Conversely, supposes $\varphi ^n$ has some fixed points for each $n \in \mathbb{N} $, then there exists $ l,n\in \mathbb{N}$ such that $\fai^n (l) = l $. Let $f_l(x) = x_l $,  then $f_l$ is a bounded linear functional on $\ell^p$. By the definition of the adjoint $\left( wC_{\varphi} \right) ^*$,
 \begin{align*}
        \left( \left( \left( wC_{\fai} \right) ^n \right) ^{\ast}\left( f_l \right) \right) (x)
        &= \left( f_l \left( \left( wC_{\fai} \right) ^n \right) (x) \right)\\
        &=  f_l \left( \left( wC_{\fai} \right) ^n (x) \right) \\
        &=  f_l \left( \left( w_k w_{\fai(k)} \cdots w_{\fai^{n-1} (k)} x_{\fai^{n} (k)} \right)_{k \in \N} \right)\\
        &=  w_l w_{\fai(l)} \cdots w_{\fai^{n-1} (l)} x_{\fai^{n} (l)}\\
        &=  w_l w_{\fai(l)} \cdots w_{\fai^{n-1} (l)} x_l\\
        &=  w_l w_{\fai(l)} \cdots w_{\fai^{n-1} (l)} f_l(x) .
    \end{align*}
    Hence,
\begin{equation}
        \left( \left( \left( wC_{\fai} \right) ^n \right) ^{\ast}\left( f_l \right) \right) =
        w_l w_{\fai(l)} \cdots w_{\fai^{n-1} (l)} f_l(x) .
\end{equation}
 It follows that $f_l$ is an eigenvector of $\wcn$. Since $wC_{\varphi} $ is hypercyclic, then $\left( wC_{\varphi} \right) ^n$ is also hypercyclic, which is a contradiction to Theorem \ref{lc2.53}.
 \end{proof}

Before proving our conclusion, we need to extend the definition of $\fai$. $\forall n\in \N$, we define
\begin{align} \label{notation}   
 \varphi ^{-n}\left( k \right) =\begin{cases}
	l ,& \exists \, l \,\, \text{s.t.} \, \varphi^n\left(l\right) = k,\\
	0 ,& \varphi ^{-n}\left( k \right) =\varnothing.
    \end{cases}
\end{align}
Moreover, we specify $e_0=(0,0,\cdots,0,\cdots)$ and $w_0=0$. By the definition of $wC_\fai$, 
\begin{align}
        \wcn (e_k) &=  w_{\fai^{-n}(k)} w_{\fai^{-(n-1)}(k)} \cdots w_{\fai^{-1}(k)} e_{\fai^{-n}(k)}.
        \label{wnekdiedai}
\end{align}
for all $n\in \mathbb{N}$.

Next, our conclusions are given.

\begin{thm}\label{wujiew}
        The unbounded operator $wC_\fai $ on $\ell^p$ is hypercyclic if and only if the following assertions hold: \\
        \indent $(1)$ $\fai$ is injective. \\
        \indent $(2)$ $\fai ^n$ has no fixed point for any $n \in \mathbb{N} $. \\
        \indent $(3)$ For each $n \in \N$, there exists an increasing sequence $\{ m_k \}_{k \in \mathbb{N}}$ of positive integers such that 
        \begin{equation*}
        \lim_{k\rightarrow \infty}\left| w\left( n \right) w\left( \fai \left( n \right) \right) \cdots w\left( \fai ^{m_k-1}\left( n \right) \right) \right|=\infty .
        \end{equation*}
    \end{thm}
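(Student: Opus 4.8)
The plan is to separate the two implications and to lean on the structural results already established. For the ``only if'' direction, conditions $(1)$ and $(2)$ are exactly Theorem \ref{danshe} and Theorem \ref{nofixedpoint}, so the only substantive step is to deduce the weight growth in $(3)$. Fix $n\in\N$ and let $f=\sum_k f_k e_k\in D([wC_{\fai}]^{\infty})$ be a hypercyclic vector. A hypercyclic orbit meets every neighbourhood of $e_n$ infinitely often, so there is an increasing sequence $m_k\you\infty$ with $\lVert [wC_{\fai}]^{m_k}f-e_n\rVert\you 0$. Reading off the $n$-th coordinate through \eqref{dingyi1n}, namely $([wC_{\fai}]^{m}f)_n=w_n w_{\fai(n)}\cdots w_{\fai^{m-1}(n)}\,f_{\fai^{m}(n)}$, this forces $\left|w_n\cdots w_{\fai^{m_k-1}(n)}\right|\,\left|f_{\fai^{m_k}(n)}\right|\you 1$. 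Since $(1)$ and $(2)$ already hold, Lemma \ref{faiwujie}$(2)$ gives $\fai^{m}(n)\you\infty$ as $m\you\infty$, whence $f\in\ell^p$ yields $f_{\fai^{m_k}(n)}\you 0$; therefore $\left|w_n\cdots w_{\fai^{m_k-1}(n)}\right|\you\infty$, which is precisely $(3)$.

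For the ``if'' direction I would apply the Hypercyclicity Criterion in its subsequence form, Theorem \ref{wujiepanding}, whose standing hypotheses are already in place: each $[wC_{\fai}]^n$ is closed by Lemma \ref{closed} and $D([wC_{\fai}]^{\infty})$ is dense by Lemma \ref{dingyiyu}. I take $Y=c_{00}$ and define $S\colon Y\you Y$ on the basis by $Se_k=\tfrac{1}{w_k}e_{\fai(k)}$, extended linearly. A one-line computation gives $[wC_{\fai}]Se_k=e_k$, so criterion $(1)$ holds, and iterating produces the two quantities the whole argument rests on,
\begin{align*}
\lVert S^n e_k\rVert=\frac{1}{\left|w_k w_{\fai(k)}\cdots w_{\fai^{n-1}(k)}\right|},\qquad \lVert [wC_{\fai}]^n e_k\rVert=\left|w_{\fai^{-1}(k)}w_{\fai^{-2}(k)}\cdots w_{\fai^{-n}(k)}\right|,
\end{align*}
the second read directly from \eqref{wnekdiedai}. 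Criterion $(2)$ demands that the first tend to $0$, which is exactly the blow-up granted by $(3)$, while criterion $(3)$ demands that the second tend to $0$.

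The main obstacle is that the Criterion requires one increasing sequence $(n_j)$ that works simultaneously for every element of $c_{00}$, whereas hypothesis $(3)$ only supplies, for each fixed starting index separately, a subsequence along which the forward products diverge. To build a common sequence I would use the orbit structure of $\fai$: along a single $\fai$-orbit the products $w_k\cdots w_{\fai^{m-1}(k)}$ for different base points differ only by the fixed nonzero factors accumulated between those points, so a ``record'' subsequence chosen for one base point automatically forces divergence at every other base point of that orbit, after a finite shift. A diagonalisation over the countably many basis vectors should then merge these into a single $(n_j)$ realising criterion $(2)$. Criterion $(3)$ is where the real care is needed: when the backward $\fai$-orbit of $k$ terminates one has $\fai^{-n}(k)=0$, hence $w_0=0$ and $[wC_{\fai}]^n e_k=0$ for all large $n$, so the decay is automatic; the delicate point is to verify that the subsequence extracted for the forward blow-up is compatible with this backward decay across all orbits at once, and it is precisely this simultaneous control that I expect to be the crux of the proof.
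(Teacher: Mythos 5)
Your necessity argument is correct and is essentially the paper's: the paper reaches the same conclusion by choosing, for each $\delta_k$, a hypercyclic vector $\delta_k$-close to $e_n$ and a time $m_k$ with $\lVert (wC_{\fai})^{m_k}x-e_n\rVert<\delta_k$, which yields $|w_n\cdots w_{\fai^{m_k-1}(n)}|>(1-\delta_k)/\delta_k$; your variant, fixing one hypercyclic vector and using $f\in\ell^p$ together with Lemma \ref{faiwujie}(2) to force $f_{\fai^{m_k}(n)}\you 0$, is a harmless (arguably cleaner) rearrangement of the same idea.

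The sufficiency half is where your proposal stops being a proof, and the two obstacles you flag are not technicalities awaiting ``care'': they are fatal, and the paper's own proof fails at exactly these two places. First, criterion $(3)$ of Theorem \ref{wujiepanding} requires $\lVert\wcn e_k\rVert=|w_{\fai^{-1}(k)}\cdots w_{\fai^{-n}(k)}|\you 0$, and hypotheses $(1)$--$(3)$ do not imply this when backward orbits are infinite. Take $\fai$ as in Example \ref{failizi} (a single bilateral orbit in which every point has a preimage), write $o_j=\fai^j(1)$ for $j\in\Z$, and set $w_{o_j}=2$ for $j\le 0$ and $w_{o_j}=j+1$ for $j\ge 1$: then $(1)$--$(3)$ hold (forward products exceed $2^m$, so even the full sequence works) and the operator is unbounded, yet every backward product of length $n$ is at least $2^n$, so by \eqref{wnekdiedai} and disjointness of supports, $\lVert\wcn x\rVert\ge |w_{\fai^{-n}(l)}\cdots w_{\fai^{-1}(l)}|\,|x_l|\you\infty$ for every nonzero $x\in D([wC_{\fai}]^{\infty})$ and any $l$ in its support, so no orbit is dense. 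The paper's step \eqref{zhunze2} asserts $\wcn x\you 0$ on the grounds that $x_{\fai^n(k)}=0$ for $n>N$, but its threshold there depends on $k$, so this gives only coordinatewise convergence; in norm, $\lVert\wcn x\rVert^p=\sum_{j\in\operatorname{supp}x}|x_j|^p\lVert\wcn e_j\rVert^p$, which carries precisely the backward products you were worried about.

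Second, the merging problem is equally real, and diagonalisation cannot resolve it: even for $\fai(n)=n+1$ (Example \ref{li_njia1}), where all backward orbits terminate and the first obstacle vanishes, a common sequence need not exist. Choose unbounded weights so that $P_m:=|w_1\cdots w_m|$ equals $2^m$ for $m$ even and $1$ for $m$ odd (i.e.\ $w_{2k}=4^k$, $w_{2k+1}=4^{-k}$). Each base point $n$ then satisfies $(3)$ along its own parity subsequence, but if $f$ were hypercyclic, approximating $e_1+e_2$ would force $P_t f_{t+1}\you 1$ and $P_{t+1}f_{t+2}\you 1$ along some times $t$, hence (since $f_{t+1},f_{t+2}\you 0$) $P_t\you\infty$ and $P_{t+1}\you\infty$ simultaneously, impossible because $\min(P_t,P_{t+1})=1$. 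For bounded weights one can shift windows at cost $M^{N}$, which is why the quantifier order is harmless in Theorem \ref{youjieweiyiwei}; unboundedness destroys this, and the paper's estimate before \eqref{zhunze3} (note the stray index $n_k$ there) silently uses the per-point sequences as if they were common. So your instinct to stop at criterion $(3)$ and at the common-sequence step was sound, but the situation is worse than you suspected: the ``simultaneous control'' you hoped to establish cannot exist, because the ``if'' direction is false as stated; a correct version needs $(3)$ strengthened to a single increasing sequence $(m_k)$ along which, for every $n$, the forward products diverge and the backward products $|w_{\fai^{-1}(n)}\cdots w_{\fai^{-m_k}(n)}|$ tend to $0$.
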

\begin{proof}
Assume $ w C_{\fai}$ is hypercyclic. By Theorem \ref{danshe} and Theorem \ref{nofixedpoint}, we infer that the condition $\left( 1 \right) \text{and} \left( 2 \right) $ hold. Now we need only to show $\left( 3 \right)$. Since the set of hypercyclic vectors for $ w C_{\fai}$ is dense in $\ell^p$, then $\forall \delta _k>0$, there exists a vector $\ x\in HC\left(  w C_{\fai} \right)$ (also belongs to $D\left( \left[  w C_{\fai} \right] ^\infty \right)$ ) such that $\ \lVert x-e_n \rVert <\delta _k$. Therefore, $\forall k\ne n$,
\begin{equation*}
        \left| x_k \right|<\delta _k\Rightarrow \left| x_{\fai ^j\left( n \right)} \right|<\delta _k, ~\forall j\in N \  \left( \varphi ^n\ has\ no\ fixed\ point\ \right) 
    \end{equation*}
Since $x\in HC\left(  w C_{\fai} \right) $, then $orb\left( x, w C_{\fai} \right) \overset{dense}{\subset}l^p$. So there exists $m_k>0$ such that
 $$\lVert \left(  w C_{\fai} \right) ^{m_k}x-e_n \rVert <\delta_k , $$
 Hence,
 \begin{align*}
        &\phantom{< \ } \left|  w _n w _{\fai \left( n \right)} \cdots w _{\fai ^{m_k-1}\left( n \right)}x_{\fai ^{m_k}\left( n \right)}-1 \right| \\
        &< \left(  |w _n w _{\fai \left( n \right)} \cdots w _{\fai ^{m_k-1}\left( n \right)}x_{\fai ^{m_k}\left( n \right)}-1| ^p +  \sum_{k \in \N, k \ne n}{|w _n w _{\fai \left( n \right)} \cdots w _{\fai ^{m_k-1}\left( n \right)}x_{\fai ^{m_k}\left( n \right)}}|^p \right) ^{1/p}\\
        & = \lVert \left(  w C_{\fai} \right) ^{m_k}x-e_n \rVert \\
        &< \delta _k.
        \label{xlpfanshu1}
    \end{align*}
Therefore,
$$1-\left|  w _n w _{\fai \left( n \right)} \cdots w _{\fai ^{m_k-1}\left( n \right)}x_{\fai ^{m_k}\left( n \right)} \right|<\delta _k ,$$
Thus,
$$\left|  w _n w _{\fai \left( n \right)} \cdots w _{\fai ^{m-1}\left( n \right)} \right|>\frac{1-\delta _k}{\left| x_{\fai ^{m}\left( n \right)} \right|}>\frac{1-\delta _k}{\delta _k} = \frac{1}{\delta_k} - 1.$$
By the arbitrary of $\delta_k$, we can choose a sequence  $\left\{ \delta _k \right\} $ which tends to 0, so the condition  $\left( 3 \right)$ is necessary if $w C_{\fai}$ is hypercyclic.  

For the sufficient part of the proof, we will use the Hypercyclicity Criterion stated in Theorem \ref{wujiepanding}. The closedness of $\wcn$ is already proved in Lemma \ref{closed}. Assume the condition $\left( 1 \right) ,\left( 2 \right) ,\left( 3 \right)$ hold, now we define the map
  \begin{align*}
S:\text{\quad } c_{00} & \rightarrow c_{00}\\
\left( e_n \right) & \rightarrow  \frac{e_{\fai \left( n \right)}}{ w _n},
\end{align*}
As is easily seen, $\forall n\in N$
\begin{align*}
        \left( wC_{\fai} \right) \left( S\left( e_n \right) \right) &= w C_{\fai}\left( \frac{e_{\fai \left( n \right)}}{ w _n} \right) =e_n .
\end{align*}
$\forall x=\sum_{k=1}^{\infty}{x_k e_k} \in c_{00}$, there exists $ N_1>0$ such that $\forall k>N_1,\ x_k=0$. In view of the condition $\left( 1 \right) ,\left( 2 \right) $ and Theorem \ref{faiwujie} $\left( 2 \right)$, for each $k\in \mathbb{N}$, there exists $N_2>0$ such that $\forall n>N_2,\ \fai^n(k)>N_1$. So take $N=\max \left\{ N_1,N_2 \right\}$, we infer that for any $n>N$, $ x_{\fai ^n\left( k \right)}=0 $. So for each $ x\in c_{00}$,
 \begin{equation}
        \left( \left(  w C_{\fai} \right) S \right) \left( x \right) =\sum_{k=1}^{\infty}{x_k [ \left( wC_{\fai} \right) \left( S\left( e_k \right) \right)]}
        =\sum_{k=1}^{\infty}{x_k e_k}
        = x
        \label{zhunze1}
    \end{equation}
and
\begin{equation}
        \wcn (x) = \sum_{k=1}^{\infty}{w_k w_{\fai(k)} \cdots w_{\fai^{n-1} (k)} x_{\fai^{n} (k)} e_k } \rightarrow 0\ \ \left( n\rightarrow \infty \right) .
        \label{zhunze2}
    \end{equation}
Further, by the definition of $S$,
\begin{align*}
        \lVert S^nx \rVert &= \left\lVert S^n\left( \sum_{k=1}^N{x_ke_k} \right) \right\rVert \\
        &=\left \lVert \sum_{k=1}^N{x_k \frac{e_{\fai ^n\left( k \right)}}{ w _k w _{\fai \left( k \right)} \cdots w _{\fai ^{n-1}\left( k \right)}}} \right \rVert\\
        &=\left( \sum_{k=1}^N{\left| x_k \right|^p\frac{1}{\left| w_kw_{\varphi \left( k \right)}...w_{\varphi ^{n-1}\left( k \right)} \right|^p}} \right) ^{\frac{1}{p}}\\
        &\le \lVert x \rVert \left( \sum_{k=1}^N{\frac{1}{\left| w_kw_{\varphi \left( k \right)}...w_{\varphi ^{n_k-1}\left( k \right)} \right|^p}} \right) ^{\frac{1}{p}}.
\end{align*}
Now in view of the condition on $w$ that for each $n \in \N$, there exists an increasing sequence $\{ m_k \}_{k \in \mathbb{N}}$ of positive integers such that 
\begin{equation*}
       \lim_{k\rightarrow \infty}\left| w\left( n \right) w\left( \fai \left( n \right) \right) \cdots w\left( \fai ^{m_k-1}\left( n \right) \right) \right|=\infty .
\end{equation*}
for each $x\in c_{00}$, we obtain that 
\begin{equation}
        \ S^{m_k} x \rightarrow 0\ ,\left( k \rightarrow \infty \right).
        \label{zhunze3}
\end{equation}
Hence with (\ref{zhunze1}),(\ref{zhunze2}),(\ref{zhunze3}), $w C_{\fai}$ satisfies the Hypercyclicity Criterion stated in Theorem \ref{wujiepanding}, thus $w C_{\fai}$ is hypercyclic.
\end{proof}

As is easily seen that the characterization of the hypercyclic weighted pseudo-shift operator $w C_{\fai}$ is the same in both the bounded and unbounded cases. So using the same approach, we can extend the conclusion to weighted pseudo-shift operators (whether bounded or unbounded) on the space $c_0$.

\begin{cor}\label{c0chaoxunhuan}
        The weighted pseudo-shift operator $wC_\fai $ on $ c_0$ is hypercyclic if and only if the following assertions hold: \\
        \indent $(1)$ $\fai$ is injective. \\
        \indent $(2)$ $\fai ^n$ has no fixed point for any $n \in \mathbb{N} $. \\
        \indent $(3)$ For each $n \in \N$, there exists an increasing sequence $\{ m_k \}_{k \in \mathbb{N}}$ of positive integers such that 
        \begin{equation*}
        \lim_{k\rightarrow \infty}\left| w\left( n \right) w\left( \fai \left( n \right) \right) \cdots w\left( \fai ^{m_k-1}\left( n \right) \right) \right|=\infty .
        \end{equation*}
 \end{cor}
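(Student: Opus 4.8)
The plan is to reproduce the proof of Theorem \ref{wujiew} line by line, replacing the $\ell^p$-norm by the supremum norm of $c_0$ and checking that no step uses a feature peculiar to $\ell^p$. First I would record that the preparatory results survive the change of space without modification: Lemma \ref{faiwujie} concerns only the combinatorics of $\fai$ and is space-independent; Lemma \ref{dingyiyu} uses merely the density of $c_{00}$ (which holds in $c_0$) together with the finite-support observation, so $D([\wci]^\infty)$ is dense in $c_0$; and the argument of Lemma \ref{closed} uses only that norm convergence forces coordinatewise convergence, which is equally valid for the sup-norm, so each $\wcn$ is closed. Hence Theorem \ref{wujiepanding} is applicable, and for a bounded $\wci$ it applies a fortiori, a bounded everywhere-defined operator being closed.

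For necessity, conditions $(1)$ and $(2)$ transfer directly from Theorems \ref{danshe} and \ref{nofixedpoint}. The injectivity argument extracts the scalar estimates $|w_{k_1}x_{\fai(k_1)}-1|\le 1/m$ and $|w_{k_2}x_{\fai(k_2)}|\le 1/m$ from a norm bound on a single approximant; since $|y_j|\le\|y\|$ for every coordinate of $y\in c_0$, this extraction is if anything more immediate. The no-fixed-point argument uses only that $f_l(x)=x_l$ is a bounded linear functional, which it is on $c_0$ (whose dual is $\ell^1$), together with Theorem \ref{lc2.53}, valid on any Banach space; it therefore carries over verbatim. For condition $(3)$, the decisive inequality $|w_nw_{\fai(n)}\cdots w_{\fai^{m_k-1}(n)}x_{\fai^{m_k}(n)}-1|\le\|(\wci)^{m_k}x-e_n\|$ again needs only that one coordinate is dominated by the norm, so the same choice $\delta_k\you 0$ forces the product to diverge.

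For sufficiency I would use the same right inverse $S(e_n)=e_{\fai(n)}/w_n$ on $c_{00}$ and verify the three hypotheses of Theorem \ref{wujiepanding}. The identity $\wci Sy=y$ is the purely algebraic computation \ref{zhunze1} and is unchanged. The limit $\wcn y\you 0$ for $y\in c_{00}$ follows as in \ref{zhunze2} from Lemma \ref{faiwujie}(2), since the argument $\fai^n(k)$ eventually leaves the finite support of $y$. The only computation that must be genuinely re-derived is the decay of $S^{m_k}y$: in place of the $\ell^p$ sum one has $\|S^n y\|=\max_{k\in\mathrm{supp}(y)}|y_k|\,|w_kw_{\fai(k)}\cdots w_{\fai^{n-1}(k)}|^{-1}$, a maximum over finitely many indices, and condition $(3)$ sends each term to $0$ along an increasing sequence of exponents.

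I expect the only genuinely delicate point to be the one already implicit in the proof of Theorem \ref{wujiew}: condition $(3)$ supplies, for each index $n$, its own increasing sequence of exponents, whereas the Hypercyclicity Criterion demands that $S^{m_k}y\you 0$ hold along a single sequence $(m_k)$ simultaneously for every $y\in c_{00}$, hence for every basis vector at once. The remedy is to extract a common subsequence by a diagonal argument over the countably many indices, exactly as in the $\ell^p$ case; because in $c_0$ the relevant quantity is a finite maximum rather than a $p$-sum, once the diagonal sequence makes each of the finitely many products $|w_j\cdots w_{\fai^{m_k-1}(j)}|$, $j\in\mathrm{supp}(y)$, tend to infinity, the bound $\|S^{m_k}y\|\le\|y\|\max_{j\in\mathrm{supp}(y)}|w_j\cdots w_{\fai^{m_k-1}(j)}|^{-1}$ forces $S^{m_k}y\you 0$. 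Since $\wcn y\you 0$ already holds for the full sequence, it holds along this $(m_k)$ as well, so \ref{zhunze1}, \ref{zhunze2} and the decay of $S^{m_k}$ together verify Theorem \ref{wujiepanding}; all remaining steps are routine bookkeeping.
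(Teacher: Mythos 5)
Your transfer of the space-independent ingredients is fine, and it matches what the paper actually does: the paper proves this corollary in one sentence, by appeal to the proof of Theorem \ref{wujiew}. The combinatorial lemmas on $\fai$, the density of $c_{00}$ in $c_0$, closedness of $\wcn$ via coordinatewise convergence, and both necessity arguments (coordinate functionals are norm-dominated in $c_0$, and $f_l\in(c_0)^*=\ell^1$) all carry over as you say, and your sup-norm replacement $\lVert S^n y\rVert=\max_{k\in\mathrm{supp}(y)}|y_k|\,\bigl|w_k w_{\fai(k)}\cdots w_{\fai^{n-1}(k)}\bigr|^{-1}$ is the right computation (legitimate because $\fai^n$ is injective, so the vectors $e_{\fai^n(k)}$ occupy distinct coordinates).

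The genuine gap sits exactly at the point you flagged, and your remedy does not work. Condition $(3)$ quantifies the sequence after the index: each $n$ gets its own $(m_k)$, and nothing forces the sets of ``good times'' of distinct indices to intersect. A diagonal argument extracts a common subsequence only when, for every finite set of indices, there exist times simultaneously good for all of them; condition $(3)$ supplies no such simultaneity. Concretely, since the weights here are unbounded, one can arrange $|w_k\cdots w_{\fai^{m-1}(k)}|$ to be huge for $m$ in blocks $[2^{2i},2^{2i+1}]$ and of size $1$ outside for indices on one $\fai$-orbit, and use the complementary blocks on a second orbit (or even along a single orbit: with unbounded weights the partial product can collapse in one step, so largeness at time $m$ for $k$ says nothing about time $m$ for $\fai(k)$); then every index satisfies $(3)$, yet no single sequence $(m_k)$ is good for two such indices at once, and $S^{m_k}y\you 0$ fails for $y=e_{j_1}+e_{j_2}$. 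Note also that the paper's $\ell^p$ proof contains no diagonal extraction — it silently inserts the per-index sequence into the estimate over all $k\le N$ at once, which is the same gap — so ``exactly as in the $\ell^p$ case'' mis-cites it. What would close the argument is to show that hypercyclicity yields the stronger, quantifier-reversed form of $(3)$ (approximate $\sum_{j\le k}e_j$ by the orbit of a hypercyclic vector to obtain times simultaneously good for the indices $1,\dots,k$, then diagonalize over $k$, where simultaneity \emph{is} available), and to run the sufficiency direction with that single common sequence; with $(3)$ read literally as stated, the sufficiency implication cannot be obtained by subsequence extraction.
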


\section{Chaoticity}\label{S3}
  In this section, we will give a necessary and sufficient condition in order that $w C_{\fai}$ be a chaotic operator. Some preparation is needed before proving it. The discussion of chaoticity requires us to pay more attention to the properties of $\fai$, so we will prove some lemmas in the first section.

\subsection{Some Lemmas} \leavevmode



The following lemma is provided as a supplement to Lemma \ref{faiwujie}.

\begin{lem}\label{faiwujie2}
        Suppose $\fai$ is injective and $\fai ^n$ has no fixed point for any $n \in \mathbb{N} $, then for any $k\in \N$, if $\forall n \in \N, \ \fai^{-n}(k) \ne 0$,  then $\fai^{-n} (k) \you \infty \  (n \you \infty)$.
\end{lem}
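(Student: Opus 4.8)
The plan is to argue by contradiction via the pigeonhole principle, exactly parallel to part $(2)$ of Lemma \ref{faiwujie}, but running the argument along the backward orbit of $k$ rather than the forward orbit. First I would record two facts. Since $\fai$ is injective, the iterates $\fai^n$ are all injective (this was shown by induction at the start of the proof of Lemma \ref{faiwujie}). Consequently, whenever $\fai^{-n}(k)\ne 0$ the value $\fai^{-n}(k)$ is the \emph{unique} element $l\in\N$ with $\fai^n(l)=k$. The standing hypothesis $\fai^{-n}(k)\ne 0$ for all $n$ therefore says that the whole backward orbit $\{\fai^{-n}(k)\}_{n\in\N}$ consists of genuine natural numbers, so that the statement $\fai^{-n}(k)\you\infty$ makes sense.

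Next I would suppose, for contradiction, that $\fai^{-n}(k)\not\you\infty$ as $n\you\infty$. Then some subsequence stays bounded: there are $M\in\N$ and indices $n_1<n_2<\cdots$ with $\fai^{-n_j}(k)\le M$ for every $j$. Looking at the first $M+1$ of these values,
\begin{align*}
\{\,\fai^{-n_1}(k),\ \fai^{-n_2}(k),\ \cdots,\ \fai^{-n_{M+1}}(k)\,\},
\end{align*}
each of which is a positive integer not exceeding $M$, the pigeonhole principle forces a coincidence $\fai^{-n_i}(k)=\fai^{-n_j}(k)$ for some $i<j$.

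Finally I would turn this coincidence into a fixed point. Writing $l:=\fai^{-n_i}(k)=\fai^{-n_j}(k)$, the defining property of the backward iterates gives $\fai^{n_i}(l)=k=\fai^{n_j}(l)$, that is $\fai^{n_i}(l)=\fai^{n_i}\!\big(\fai^{\,n_j-n_i}(l)\big)$. Injectivity of $\fai^{n_i}$ then yields $\fai^{\,n_j-n_i}(l)=l$ with $n_j-n_i\ge 1$, so $l$ is a fixed point of $\fai^{\,n_j-n_i}$, contradicting the assumption that no iterate of $\fai$ has a fixed point. I do not expect a genuine obstacle here: the argument is structurally identical to Lemma \ref{faiwujie}$(2)$, and the only point requiring slight care is the bookkeeping that makes the backward orbit well defined, namely invoking the injectivity of $\fai^n$ so that $\fai^{-n}(k)$ is unambiguous and the pigeonhole coincidence really does produce equal preimages of $k$ under distinct iterates.
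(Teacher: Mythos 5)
Your proposal is correct and follows essentially the same route as the paper's own proof: assume a bounded subsequence of the backward orbit exists, apply the pigeonhole principle to $M+1$ of its values, and derive a contradiction with the absence of periodic points. In fact you make explicit a step the paper leaves implicit --- namely, passing from the coincidence $\fai^{-n_i}(k)=\fai^{-n_j}(k)=l$ via injectivity of $\fai^{n_i}$ to the fixed-point relation $\fai^{\,n_j-n_i}(l)=l$ --- which is a welcome clarification rather than a deviation.
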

\begin{proof}
Conversely, suppose there exists $k_0 \in \mathbb{N}$ such that $\fai^{-n} (k_0)\nrightarrow \infty \  (n \you \infty)$, then there exists a bounded subsequence $\{ \fai^{-m_k}(k_0)\}_{k \in \N}$ of $\left\{ \varphi ^{-n}\left( k_0 \right) \right\} _{n \in \N}$. Suppose there exists $M\in \mathbb{N}$ such that $\forall k \in \N , \ \fai^{-m_k}(k_0) \le M $. Consider the following set
$$
    \{ \fai^{-m_1} (k_0)\text{, } \fai^{-m_2} (k_0)\text{, } \cdots \text{, } \fai^{-m_M} (k_0)\text{, } \fai^{-m_{M+1}} (k_0) \}
$$
By pigeonhole principle, there must exists positive integer $i,j \le M+1 \left( i\ne j \right)  $ such that $\fai ^{-m_i} (k_0) = \fai ^{-m_j} (k_0)$. This contradicts the fact that $\varphi ^n$ has no fixed point for any $n \in \mathbb{N} $.
\end{proof}

For each $n \in \N$ and any positive integer $k$, we denote a ``$\fai^n$ bi-orbit" (later referred to as  ``$\fai^n$-orbit"  ) of $k$ by the set
\begin{align*}
        A_{n,k} = orb(k,\fai^n) \cup orb(k,\phi^{n})= \{\fai^{in}(k) | i \in \Z \} - \{ 0 \},
\end{align*}
Where $\phi=\fai^{-1}$. Moreover, we denote $A_k = A_{1,k}$ and $B_{n,k} = \{\fai^{in}(k) | i \in \Z , i\ne 0 \} - \{ 0 \} = A_{n,k} - \{ k \}$.

The reason for dropping $\left\{ 0 \right\}$ from the definition of $A_{n,k}$ is to satisfy the previous definition in $\left( \ref{notation} \right)$  which will make it easier for us to select the minimum element $A_{n,k}$. By definition, it is easy to know the set $A_{n,k} \subseteq \N$. 

When $\varphi$  is injective, for any fixed $k$, there are two cases: when $k $ is a fixed point of $\varphi$ for some $n$, $A_{k}$ is a finite set; when $k $ is not fixed point of $\varphi$ for any $n$, $A_{k}$ is an infinite set. It is worth noting that there are also two cases in which $A_{k}$ is an infinite set: one is when there exists $ n \in \N$ such that $\fai^{-n}(k) = 0$ i.e there is a point without any preimage under $\fai^{n}$ in the orbit $A_{k}$ (this point is like the ``generator" of the orbit); and the other is when $\fai^{-n}(k) \ne 0$ for any $ n \in \N$, that is, there is no point without any preimage under $\fai^{n}$ in the orbit $A_{k}$. The above statement also holds for $A_{n,k}$.

Next, we give some examples of $\varphi$ for better illustration.

\begin{ex}
        Consider the mapping $\varphi \left( n \right) =\begin{cases}
	2,&		n=1\\
	3,&		n=2\\
        1,&		n=3\\
        4,&     \text{else}
        \end{cases}$.
    \end{ex}
In this case, $\forall k=1,2,3 $, $A_k=\left\{1,2,3\right\} $ is a finite set and $k$ is a fixed point of $\varphi$.

\begin{ex}\label{li_njia1}
     Consider the mapping $\varphi \left( n \right)=n+1 $,  $\forall n \in \mathbb{N} $.
\end{ex}
For $k=1, \fai^{-1}(1) = 0$ i.e $1$ has no preimage. In this case, $A_1=\mathbb{N}$ is an infinite set and there is only one $\fai$-orbit  $A_1$, that is, for any $ k \in \N, A_k=A_1$. Furthermore, $\fai ^n$ has no fixed point for any $n \in \mathbb{N} $.

\begin{ex}\label{failizi}
     Consider the mapping $\varphi \left( n \right) =\begin{cases}
	5k-4,&		n=5k+1\\
	5k+2,&		n=5k ~~~~~~ \left(k\ge 1\right) \\
        n+1,&     \text{else}
        \end{cases}$.
\end{ex}
For any $k,n\in \mathbb{N}, \  \fai^{-n}(k) \ne  0$, that is, there is no point without any preimage. In this case, there is only one $\fai$-orbit  $A_1$. Moreover, $A_1=\mathbb{N}$ is an infinite set, and $\fai ^n$ has no fixed point for any $n \in \mathbb{N} $.

\begin{lem}\label{n1n2}
    Suppose $\fai$ is injective without any periodic point, then for each $k$ and $N$,\\
    \indent $(1)$  $\forall n \in orb(k,\fai^N)$, let $\varphi^{n_1N}\left( k \right) =n (n_1 \in \N)$, then $\lim_{n\you \infty}{n_1}=\infty$;\\
    \indent $(2)$  $\forall n \in orb(k,\phi^N)$, let $\varphi ^{-n_2N}\left( k \right) =n (n_2 \in \N)$, then $\lim_{n\you \infty}{n_2}=\infty$.
\end{lem}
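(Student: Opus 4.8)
The plan is to reduce both statements to one elementary fact: if the correspondence $j \mapsto \varphi^{jN}(k)$ (respectively $j \mapsto \varphi^{-jN}(k)$) is injective, then its inverse sends large orbit elements to large indices. The injectivity is exactly where the hypotheses on $\varphi$ are used.

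First I would show that the forward orbit elements $\varphi^{jN}(k)$, $j = 0,1,2,\dots$, are pairwise distinct. By the argument in Lemma \ref{faiwujie}, injectivity of $\varphi$ yields injectivity of every $\varphi^{jN}$. If $\varphi^{iN}(k) = \varphi^{jN}(k)$ with $0 \le i < j$, then applying injectivity of $\varphi^{iN}$ gives $k = \varphi^{(j-i)N}(k)$, so $k$ is a periodic point of $\varphi$, contradicting the hypothesis. Hence $j \mapsto \varphi^{jN}(k)$ is a bijection onto $orb(k,\varphi^N)$, and the index $n_1$ attached to each $n \in orb(k,\varphi^N)$ is well defined.

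The core estimate is then a boundedness argument. Fix $M \in \N$. The initial segment $\{\varphi^{jN}(k) : 0 \le j \le M\}$ is a finite set of positive integers, hence bounded by some $L = L(M)$. For every $n \in orb(k,\varphi^N)$ with $n > L$, its index $n_1$ cannot satisfy $n_1 \le M$, since otherwise $n = \varphi^{n_1 N}(k) \le L$; thus $n_1 > M$. As $M$ is arbitrary, $n_1 \to \infty$ as $n \to \infty$ through the orbit, which proves $(1)$.

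For $(2)$ I would repeat the argument on the backward orbit. Since $n \in orb(k,\phi^N)$, each index $n_2$ with $\varphi^{-n_2 N}(k) = n$ corresponds to a genuinely existing preimage, so the convention in $(\ref{notation})$ never interferes. Distinctness of the backward elements follows identically: if $\varphi^{-iN}(k) = \varphi^{-jN}(k) = m$ with $i < j$, then $\varphi^{iN}(m) = \varphi^{jN}(m) = k$, and injectivity of $\varphi^{iN}$ forces $m = \varphi^{(j-i)N}(m)$, again a periodic point. The same finite-segment boundedness argument then gives $n_2 \to \infty$. The only point requiring care is this bookkeeping in $(2)$ — ensuring the backward indices are unambiguous and that undefined preimages are excluded — but this is guaranteed precisely by taking $n$ in $orb(k,\phi^N)$; the underlying pigeonhole/boundedness idea is the same one already used in Lemmas \ref{faiwujie} and \ref{faiwujie2}, so no substantive obstacle remains beyond stating it cleanly.
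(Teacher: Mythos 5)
Your proof is correct and follows essentially the same route as the paper: the core step in both is to bound the finite initial segment $\{\varphi^{jN}(k) : j \le M\}$ by some $L$ and conclude that any orbit element $n > L$ must have index exceeding $M$. Your preliminary verification that $j \mapsto \varphi^{jN}(k)$ is injective (so the indices $n_1$, $n_2$ are well defined) is a small addition the paper leaves implicit, but it does not change the argument.
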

\begin{proof}
    (1). For any positive integer $M$, let $L = \{\fai^{n_1N}|n_1\le M\}$ and $N = \sup L$, then when $n \in orb(k,\fai^N)$ and $n>N$, we have $n_1>M$. This is because if $n_1<M$, then $n = \fai^{n_1N}\in L$, hence $n\le \sup L= N$, which makes a contradiction. Therefore, we have $\lim_{n\you \infty}{n_1}=\infty$.
    
    (2). For any $M \in \N$, let $L = \{\fai^{-n_2N}|n_2\le M\}$ and $N = \sup L$, then when $n \in orb(k,\phi^N)$ and $n>N$, we have $n_2>M$. This is because if $n_2<M$, then $n = \fai^{-n_2N}\in L$, hence $n\le \sup L= N$, which makes a contradiction. Therefore, we have $\lim_{n\you \infty}{n_2}=\infty$.
\end{proof}

The following lemma shows that the different $\fai^n$ orbits are disjoint when $\varphi$ is injective.

 \begin{lem}\label{guidaobujiao}
        Suppose $\fai$ is injective, then for all  distinct $ k_1, k_2 \in \N$ , their $\fai^n$-orbit either the same or disjoint. That is, if $A_{n,k_1} \ne A_{n,k_2}$, then $A_{n,k_1} \cap A_{n,k_2} = \varnothing.$
\end{lem}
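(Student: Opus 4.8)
The plan is to prove the contrapositive in a strengthened form: whenever $A_{n,k_1}\cap A_{n,k_2}\neq\varnothing$ we in fact have $A_{n,k_1}=A_{n,k_2}$. This amounts to showing that ``lying in a common $\fai^n$-orbit'' is an equivalence relation on $\N$, so that the sets $A_{n,k}$ are precisely its equivalence classes and are therefore pairwise equal or disjoint. Throughout I would use that every element of $A_{n,k}$ has the form $\fai^{in}(k)$ for some $i\in\Z$, where negative powers denote iterated preimages under the single-valued (by injectivity of $\fai$) partial inverse $\phi=\fai^{-1}$, and that every iterate occurring in an orbit is nonzero by the convention in $(\ref{notation})$.

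First I would record the two cancellation rules supplied by injectivity: $\phi(\fai(x))=x$ holds for \emph{every} $x$, whereas $\fai(\phi(y))=y$ holds only when $\phi(y)\neq 0$, i.e. when $y$ lies in the range of $\fai$. Iterating these yields the composition law $\fai^{an}\!\circ\fai^{bn}=\fai^{(a+b)n}$ for all $a,b\in\Z$, interpreted so that both sides are simultaneously $0$ or simultaneously equal to the same nonzero element; the key observation is that inside a fixed orbit every intermediate iterate of a nonzero element is again nonzero, so the factor $\fai\circ\phi$ that appears when a forward step meets a preimage step always reduces to the identity.

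With the composition law in hand the argument is short. Suppose $m\in A_{n,k_1}\cap A_{n,k_2}$, say $m=\fai^{an}(k_1)=\fai^{bn}(k_2)\neq 0$. Applying $\fai^{-bn}$ and cancelling gives $k_2=\fai^{(a-b)n}(k_1)$, hence also $k_1=\fai^{(b-a)n}(k_2)$ by symmetry. Then for an arbitrary $p=\fai^{cn}(k_1)\in A_{n,k_1}$ we obtain $p=\fai^{cn}\fai^{(b-a)n}(k_2)=\fai^{(c+b-a)n}(k_2)\in A_{n,k_2}$; the reverse inclusion is identical, so $A_{n,k_1}=A_{n,k_2}$. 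Taking contrapositives, $A_{n,k_1}\neq A_{n,k_2}$ forces $A_{n,k_1}\cap A_{n,k_2}=\varnothing$.

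The only delicate point — and the step I expect to be the main obstacle — is justifying the composition law $\fai^{an}\fai^{bn}=\fai^{(a+b)n}$ when $a$ and $b$ have opposite signs, since $\fai$ is in general not surjective and a forward step does not automatically undo a preimage step. The same-sign cases are immediate (forward iterates compose because $\fai$ is a genuine map, backward iterates compose by associativity of the partial map $\phi$). In the mixed case one must verify that the intermediate element lies in the relevant range so that $\fai\circ\phi$ acts as the identity, which is exactly where injectivity together with the nonvanishing of orbit elements is used; a short case split according to whether $a+b\ge 0$ or $a+b<0$ then closes the argument.
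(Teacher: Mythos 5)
Your proof is correct and takes essentially the same route as the paper's: a common point $m=\fai^{an}(k_1)=\fai^{bn}(k_2)$ forces $k_2=\fai^{(a-b)n}(k_1)$, so the two orbits coincide, and the lemma follows by contraposition. The paper's own proof is a terser version of this (a case split on $i=j$ versus $i\ne j$, using injectivity) that silently assumes both the step from ``$k_2$ lies in the orbit of $k_1$'' to ``$A_{n,k_1}=A_{n,k_2}$'' and the mixed-sign composition law $\fai^{an}\circ\fai^{bn}=\fai^{(a+b)n}$, which you rightly identify as the delicate point and verify explicitly via the nonvanishing of intermediate orbit elements.
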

\begin{proof}
For any fixed $n$, suppose there exists $k_1, k_2 \in \N$ such that $A_{n,k_1} \cap A_{n,k_2} \ne \varnothing$,that is, there exists $ i,j \in \Z$ such that $\fai^{in}(k_1) = \fai^{jn}(k_2) = l$. If $i=j$, then by $\varphi$ being a injection we infer that $k_1 = k_2$ , therefore $A_{n,k_1} = A_{n,k_2}$. If $i \ne j$, we might assume that $i>j$, then $k_2 = \fai^{(i-j)n}(k_1)$, $k_2$ is in the orbit of $k_1$ ,i.e , $A_{n,k_1} = A_{n,k_2}$.
\end{proof}

Define $g_{n}(k)=\inf \{ A_{n,k} \}, \ k \in \N$, this is a mapping from $\N$ to $\N$  which maps each $k \in \N$ to the smallest element in $A_{n,k}$ . We may regard the orbit $A_{n,k}$ to be generated by $g_{n}(k)$ under $\fai^{n}$. Moreover, we let $g(k)=g_{1}(k)=\inf \{ A_k\}$.

Denote $G_n = \text{Im} (g_n)$ as the range of $g_{n}$, which represents the set of ``generator" for each $\fai^n$ orbit, which may be finite or infinite, but must be countable. Moreover,  we let $G=G_1=\text{Im}(g)$. The main purpose for this definition is to select the ``generator" of each orbit.

Again, we give some examples for illustration.

\begin{ex}
        Consider the mapping defined in $\left(\ref{li_njia1}\right)$ and $\left(\ref{failizi}\right)$.
\end{ex}
For these two mappings, $A_k=A_1=\mathbb{N},$ and $ \  g(k)=1$ $\left( \forall k\in \mathbb{N}\right)$. Thus $G=\{ 1 \} $ with $\left| G \right|=1$.

\begin{ex}
         Consider the mapping $\varphi \left( n \right) =\begin{cases}
	5,&		   n=1\\
        5k+3,&     n=5k-1\\
	5k+5,&	   n=5k ~~~~~~ \left(k\ge 1\right) \\
        5k-4,&     n=5k+1\\
        5k-3,&     n=5k+2\\
        n+1,&     \text{else}
        \end{cases}$.
\end{ex}
  In this case, $\mathbb{N}$ is divided into two $\fai$-orbit, $A_1=\left\{ 1,5k,5k+1 \right\} \left( k\ge 1 \right) $ and $A_2=\mathbb{N}-A_1$. Thus $G=\{ 1,2 \}$ with $ \left| G \right|=2$.

\begin{ex} \label{sushulizi}
        Let $P_i=\left\{ p_i^n \left| ~p_i\ is\ the\ i\text{-}th\ prime\ number, \ \forall i,n \in \N \right. \right\}$ and  $
   P_0=\mathbb{N}-\bigcup_{i\ge 1}{P_i}$. Then we arrange the elements in $P_i$ in ascending order and denote $P_i\left( m \right)$ as the $m$-th element. It is clear that $P_i\left( m \right) = p_i^m (i\ge 1)$.
        
        1.Consider the mapping $\varphi \left( P_i\left( k \right) \right) =P_i\left( k+1 \right) ,\ \forall i\ge 0$. 
        
        In this case, Since there are infinitely many prime numbers, then $\mathbb{N}$ is divided into a countably infinite number of orbits. And there are points with no preimage in each orbit, actually for any $i$, $\fai^{-1}(P_i(1))=0$. Furthermore,  for any $k \in \N$, we can infer that $g(P_i(k))=p_i  \ (i\in \N)$ and $g(P_0(k))=1$. In this case, $G$ is the set of all prime numbers and 1, Thus $\left| G \right|=+\infty $.

        2. Consider the mapping  $\varphi \left( n \right) =\begin{cases}
	P_i\left( 2 \right) ,&		n=P_i\left( 1 \right)\\
	P_i\left( 3k \right) ,&		n=P_i\left( 3k-1 \right)\\
        P_i\left( 3k+2 \right) ,&		n=P_i\left( 3k \right)\\
        P_i\left( 3k-2 \right) ,&		n=P_i\left( 3k+1 \right)\\
        \end{cases}$.
        
        In this case, $\mathbb{N}$ is divided into a countably infinite number of orbits. And there are points with no preimage in each orbit. Moreover, for any $k \in \N$, we infer that $g(P_i(k))=p_i \ (i\in \N)$ and $g(P_0(k))=1$. Thus, $G$ is the set of all prime numbers and 1 with $\left| G \right|=+\infty $.

\end{ex}
The following lemma shows that we can divide the positive integers into some disjoint $\fai^n$ orbits.

 \begin{lem}\label{guidaobianli}
        Suppose $\fai$ is injective, then for $\forall n \in N$,we have
        \begin{align*}
            \N = \bigsqcup_{k\in G_n}{A_{n,k}}.
        \end{align*}
    \end{lem}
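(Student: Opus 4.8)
The plan is to establish separately the two defining properties of a disjoint union: that the orbits $\{A_{n,k}\}_{k\in G_n}$ cover $\N$, and that distinct generators index disjoint orbits. Throughout I fix $n$ and rely on the fact that each $A_{n,k}$ is a nonempty subset of $\N$ (it contains $\fai^{0}(k)=k$), so by the well-ordering principle its infimum $g_n(k)=\inf A_{n,k}$ is attained and lies in $A_{n,k}$. Consequently $g_n(k)$ belongs to the same orbit as $k$, and Lemma \ref{guidaobujiao} forces $A_{n,g_n(k)}=A_{n,k}$.

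For coverage, take any $m\in\N$. Then $m\in A_{n,m}=A_{n,g_n(m)}$, and $g_n(m)\in G_n$ by the definition $G_n=\mathrm{Im}(g_n)$. Hence $m\in\bigcup_{k\in G_n}A_{n,k}$, giving $\N\subseteq\bigcup_{k\in G_n}A_{n,k}$; the reverse inclusion is immediate since each $A_{n,k}\subseteq\N$.

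For disjointness I would first record the idempotence of $g_n$ on its image: if $k\in G_n$, then $k=g_n(k')$ for some $k'$, so $A_{n,k}=A_{n,k'}$ and therefore $k=\inf A_{n,k'}=\inf A_{n,k}=g_n(k)$. That is, every generator is the minimal element of its own orbit. Now suppose $k_1,k_2\in G_n$ are distinct. Were $A_{n,k_1}=A_{n,k_2}$, taking infima would give $k_1=g_n(k_1)=g_n(k_2)=k_2$, a contradiction; so $A_{n,k_1}\ne A_{n,k_2}$, and Lemma \ref{guidaobujiao} then yields $A_{n,k_1}\cap A_{n,k_2}=\varnothing$. Combining coverage with pairwise disjointness proves $\N=\bigsqcup_{k\in G_n}A_{n,k}$.

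The argument is largely bookkeeping once Lemma \ref{guidaobujiao} is in hand; the one point deserving care — and the step I would treat as the crux — is verifying that $G_n$ selects exactly one representative per orbit, which is precisely the idempotence $g_n(g_n(k))=g_n(k)$ derived above from well-ordering together with the equality of orbits passing through a common point.
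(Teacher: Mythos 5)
Your proof is correct and takes essentially the same route as the paper: coverage via $m\in A_{n,m}=A_{n,g_n(m)}$ with $g_n(m)\in G_n$, and disjointness via Lemma \ref{guidaobujiao}. The only difference is that you explicitly verify the idempotence $g_n(g_n(k))=g_n(k)$, so that distinct generators index distinct orbits --- a step the paper's proof leaves implicit.
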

\begin{proof}
By Lemma \ref{guidaobujiao}, we know that each orbit $A_{n,k}$ is disjoint. Let $S = \bigsqcup_{k\in G_n}{A_{n,k}}$, it suffice to show that $S \subseteq \N$ and $\N \subseteq S$. Clearly, $S \subseteq \N$ since each $A_{n,k} \subseteq \N$. On the other hand, $\forall l \in \N$, there exists $k \in \N$ such that $g_n(l)=k\in G_n$, then $l \in A_{n,k} \subseteq S$, hence $\N \subseteq S$.
\end{proof}

\begin{lem}\label{Glian}
        Suppose $\fai$ is injective, and $\fai$ has no periodic point, then for every $k,n \in \N$, 
        \begin{align*}
            A_k &= A_{2,k} \sqcup A_{2,\fai(k)}\\
                &= A_{3,k} \sqcup A_{3,\fai(k)} \sqcup A_{3,\fai^2(k)}\\
                &= \bigsqcup\limits_{i=0}^{n-1}{A_{n,\varphi ^i\left( k \right)}}.
        \end{align*}
        So we have $ G \subset G_2 \subset G_3 \subset \cdots$.
\end{lem}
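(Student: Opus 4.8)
The plan is to prove the orbit decomposition first and then read off the assertion about generators. The one fact I would isolate at the outset, and which is where the no-periodic-point hypothesis is spent, is that the map $t\mapsto \fai^t(k)$ is injective on $\Z$ in the sense that $\fai^a(k)=\fai^b(k)\ne 0$ forces $a=b$: if $a<b$ then $\fai^{b-a}$ would fix the nonzero point $\fai^a(k)$, contradicting that $\fai^n$ has no fixed point for any $n$. Consequently each element of $A_k$ is $\fai^t(k)$ for exactly one $t\in\Z$. With this in hand I would prove $A_k=\bigcup_{i=0}^{n-1}A_{n,\fai^i(k)}$ by division with remainder: given $x=\fai^t(k)\in A_k$, write $t=qn+r$ with $0\le r\le n-1$; then $x=(\fai^n)^q(\fai^r(k))\in A_{n,\fai^r(k)}$, using that $\fai^r(k)$ is a genuine (forward) element and $x\ne 0$. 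The reverse inclusion is immediate since $A_{n,\fai^i(k)}=\{\fai^{jn+i}(k):j\in\Z\}-\{0\}\subseteq A_k$.

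Next I would upgrade the union to a disjoint union. By Lemma \ref{guidaobujiao} two $\fai^n$-orbits are either equal or disjoint, so it suffices to rule out $A_{n,\fai^i(k)}=A_{n,\fai^j(k)}$ for $0\le i<j\le n-1$. If they coincided then $\fai^j(k)=(\fai^n)^l(\fai^i(k))=\fai^{ln+i}(k)$ for some $l\in\Z$, and the injectivity of $t\mapsto\fai^t(k)$ (applicable since $\fai^j(k)\ne 0$) forces $j=ln+i$, i.e. $n\mid(j-i)$ with $0<j-i<n$, a contradiction. Hence $A_k=\bigsqcup_{i=0}^{n-1}A_{n,\fai^i(k)}$, which contains the displayed cases $n=2,3$ as instances. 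This part I expect to be routine once the injectivity fact is recorded.

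For the generators, the reliable consequence is $G\subseteq G_n$ for every $n$: if $m\in G$ then $m=\min A_m$, and since $m=\fai^0(m)$ lies in the piece $A_{n,m}\subseteq A_m$ of the decomposition, $m$ is a fortiori the least element of $A_{n,m}$, so $m=g_n(m)\in G_n$. Applying the same decomposition with $\fai^n$ in place of $\fai$ gives $A_{n,k}=\bigsqcup_{i=0}^{m-1}A_{nm,\fai^{in}(k)}$, and the identical argument yields $G_n\subseteq G_{nm}$; that is, $G_a\subseteq G_b$ whenever $a\mid b$.

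The main obstacle is precisely the nesting claim in the form written. The decomposition only ever compares $A_{n,\cdot}$ with its refinements $A_{nm,\cdot}$, so what it delivers is $G\subseteq G_n$ for all $n$ together with the divisibility nesting above. The literal chain $G_2\subseteq G_3\subseteq\cdots$ does not follow, because $A_{3,\cdot}$ is not a refinement of $A_{2,\cdot}$: an element can be minimal in its $\fai^2$-orbit while a strictly smaller element sits in its $\fai^3$-orbit, so that $m\in G_2$ yet $m\notin G_3$. I would therefore either replace the chain by the divisibility nesting $G=G_1\subseteq G_n$ (and $G_a\subseteq G_b$ for $a\mid b$), which is what the decomposition actually proves and what is needed downstream, or re-examine the intended meaning of the subscripts before asserting $G_n\subseteq G_{n+1}$.
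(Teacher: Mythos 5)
Your decomposition argument is essentially the paper's: the paper likewise proves $A_k=\bigsqcup_{i=0}^{n-1}A_{n,\fai^i(k)}$ by writing $m=an+p$ with $p\in\{0,1,\dots,n-1\}$ and sorting $\fai^m(k)$ into $A_{n,\fai^p(k)}$, citing Lemma \ref{guidaobujiao} for disjointness. But your write-up is tighter in two places where the paper is silent. First, Lemma \ref{guidaobujiao} only says that two $\fai^n$-orbits are equal or disjoint; to write $\bigsqcup$ one must additionally rule out $A_{n,\fai^i(k)}=A_{n,\fai^j(k)}$ for $0\le i<j\le n-1$, and your observation that $t\mapsto\fai^t(k)$ is injective on $\Z$ (exactly where the no-periodic-point hypothesis is spent) supplies this; the paper's proof omits that verification. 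Second, the paper's proof never addresses the generator claim at all — the sentence ``so we have $G\subset G_2\subset G_3\subset\cdots$'' is asserted without argument — whereas you actually prove $G\subseteq G_n$ for every $n$ and, more generally, $G_a\subseteq G_b$ whenever $a\mid b$.

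Your final objection is correct, not merely cautious: the literal chain $G_2\subseteq G_3\subseteq\cdots$ is false under the lemma's hypotheses. Take a single bi-infinite orbit, i.e. a bijection $v:\Z\to\N$ with $\fai(v(t))=v(t+1)$, arranged so that $v(0)=2$ and $v(3)=1$, the remaining values $3,4,5,\dots$ being distributed over $\Z\setminus\{0,3\}$ arbitrarily. Then $\fai$ is injective with no periodic point; since $1$ sits at the odd index $3$, the minimum of the even-index values $\{v(2i):i\in\Z\}=A_{2,2}$ is $2$, so $2\in G_2$; but $1=v(3)\in A_{3,2}$ forces $g_3(2)=1$, and since no other $\fai^3$-orbit contains $2$, we get $2\notin G_3$. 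As you say, $A_{b,\cdot}$ refines $A_{a,\cdot}$ precisely when $a\mid b$, so only the divisibility nesting follows from the decomposition. Your proposed repair — replace the chain by $G\subseteq G_n$ for all $n$ (or $G_a\subseteq G_b$ for $a\mid b$) — is the right one and costs nothing downstream: the only consequence the paper ever invokes (in the necessity parts of Theorems \ref{lphundun} and \ref{hundunchongyao}) is $k=g(l)\in G\subseteq G_N$, which your argument establishes.
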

\begin{proof}
By Theorem \ref{guidaobujiao}, we know that each orbit $A_{n,k}$ is disjoint. Let $S=\bigsqcup\limits_{i=0}^{n-1}{A_{n,\varphi ^i\left( k \right)}}$. For any $k,n \in \N$, $\forall l \in A_k$, there exists $m\in \Z$ such that $l = \fai^{m}(k)$. 
Also there exists $a\in \Z$ and $p \in \{0,1,2,\cdots,n-1 \}$ such that $m = an + p$, it follows that $l = \fai^{m}(k) = \fai^{an + p}(k) = \fai^{an}(\fai^{p}(k)) \in A_{n,\fai^{p}(k)}$. Thus $A_k \subseteq S$. 

On the other hand, for any $l \in S$, there exists $a\in \Z, \ p \in \{0,1,2,\cdots,n-1 \}$ such that $l = \fai^{an}(\fai^{p}(k))$, let $m = an + p$, then $l = \fai^{an + p}(k) = \fai^{m}(k) \in A_k$, so $S \subseteq A_k$ which completes the proof. 
\end{proof}

\begin{lem}\label{taoyi}
        Suppose $\fai$ is injective, and $\fai$ has no periodic point, then for any finite set $F \subseteq  \N$, and every $k \in \N$, there exists a $N \in \N$ such that for any $ n > N$,
        \begin{align*}
            B_{n,k} \cap F = \varnothing. \qedhere
        \end{align*}
\end{lem}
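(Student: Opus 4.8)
The plan is to reduce the statement to an elementary divisibility argument about the indices appearing in the $\varphi$-orbit of $k$. First I would record the containments $B_{n,k} \subseteq A_{n,k} \subseteq A_k$: every element of $A_{n,k}$ has the form $\varphi^{in}(k)$ and is therefore of the form $\varphi^m(k)$ for some $m \in \Z$, so it lies in $A_k$. Consequently any $f \in F$ that is \emph{not} on the full orbit $A_k$ can never belong to $B_{n,k}$ for any $n$, and only the finite set $F \cap A_k$ is relevant. This lets me discard the ``off-orbit'' points of $F$ immediately.

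Next I would establish that the orbit $A_k$ is indexed injectively by $\Z$ (or by a half-line of $\Z$, once the empty-preimage convention of $(\ref{notation})$ is taken into account): every $l \in A_k$ equals $\varphi^{j}(k)$ for a unique integer $j$. Indeed, if $\varphi^{i}(k) = \varphi^{j}(k)$ were a common nonzero value with $i \ne j$, then applying injectivity of $\varphi$ (equivalently of $\varphi^{|i-j|}$, cf. Lemma \ref{faiwujie}) and, wherever a negative index occurs, rewriting $\varphi^{-m}(k) = l$ as $\varphi^{m}(l) = k$, one always arrives at $\varphi^{|i-j|}(p) = p$ for some $p \in \N$, contradicting the hypothesis that $\varphi^{n}$ has no fixed point for any $n$. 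Thus each $f \in F \cap A_k$ carries a well-defined index $j_f \in \Z$ with $\varphi^{j_f}(k) = f$.

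The heart of the argument is then the observation that $f \in B_{n,k}$ forces $f = \varphi^{in}(k)$ for some integer $i \ne 0$, whence $j_f = in$ and therefore $|j_f| \ge n$. Setting $N = \max\{\, |j_f| : f \in F \cap A_k,\ f \ne k \,\}$ (with $N = 1$ if this set is empty), for every $n > N$ no $f \in F \cap A_k \setminus \{k\}$ can satisfy $|j_f| \ge n$, so none of them lies in $B_{n,k}$; and $k \notin B_{n,k}$ always, since $k = \varphi^{in}(k)$ with $i \ne 0$ would make $k$ a $\varphi^{n}$-periodic point. Hence $B_{n,k} \cap F = \varnothing$ for all $n > N$, which is exactly the claim.

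The only delicate point is the uniqueness-of-index step, since the bookkeeping must simultaneously handle forward iterates, backward iterates, and the empty-preimage convention $\varphi^{-m}(k)=0$; I expect this to be where the care is needed. However, it reduces cleanly to the no-periodic-point hypothesis in each of the three cases ($i,j \ge 0$; mixed signs; $i,j < 0$), and once the integers $j_f$ are in hand the remainder is a one-line divisibility estimate.
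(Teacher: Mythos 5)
Your proof is correct, and it takes a genuinely different route from the paper's. The paper deduces the lemma from its two escape lemmas: by Lemma \ref{faiwujie}(2) and Lemma \ref{faiwujie2}, both $\fai^n(k)$ and $\fai^{-n}(k)$ tend to infinity, so with $M = \max F$ one chooses $N$ so that $\fai^{n}(k) > M$ and $\fai^{-n}(k) > M$ for all $n > N$; since $in \ge n$, every point $\fai^{\pm in}(k)$ of $B_{n,k}$ then exceeds $M$ and hence avoids $F$. You instead exploit the indexing of the single bi-orbit $A_k$: injectivity of $\fai$ together with the absence of periodic points makes $j \mapsto \fai^{j}(k)$ injective (your three-case analysis --- both indices nonnegative, mixed signs via rewriting $\fai^{-m}(k)=l$ as $\fai^{m}(l)=k$, both negative --- is exactly right, each case producing a fixed point of some $\fai^{m}$), so each $f \in F \cap A_k$ carries a unique index $j_f$, and $f \in B_{n,k}$ forces $j_f = in$ with $i \ne 0$, whence $|j_f| \ge n$; taking $N = \max |j_f|$ finishes. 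Your route buys two things: it is self-contained (no limit lemmas, no pigeonhole), and it handles the empty-preimage convention uniformly --- in particular it covers the case where the backward orbit terminates, i.e.\ $\fai^{-n_0}(k) = 0$ for some $n_0$, where the paper's appeal to Lemma \ref{faiwujie2} is strictly speaking not licensed, since that lemma assumes $\fai^{-n}(k) \ne 0$ for all $n$ (the paper's argument is easily repaired, because $0 \notin F$ and once one backward iterate vanishes all subsequent ones do, but your version needs no repair). What the paper's route buys in exchange is reuse of already-proved machinery and the quantitatively stronger conclusion that for large $n$ the whole punctured $\fai^n$-orbit lies above any prescribed bound, not merely outside the given finite set $F$.
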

\begin{proof}
For any finite set $F \subseteq  \N$, there exists $M>0$ such that  $ \max\left\{ F \right\} <M$.
 By Lemma \ref{faiwujie} (2), $\forall k\in \N$, there exists $N_1 \in \N$ such that for any $ n>N_1$, $\fai^n(k)>M$. Therefore for any  $ n>N_1$ and $i \in \N$,  $\fai^{in}(k)>M$ which means $\left(orb(k,\fai^n) - \{ k \}\right) \cap C = \varnothing$. By Lemma \ref{faiwujie2}, there exists $ N_2 \in \N$ such that for any $ n>N_2$, $\fai^{-n}(k)>M$, then for any $n>N_2$ and $ i \in \N$, $\fai^{-in}(k)>M$ which means $\left(orb(k,\fai^{-n}) - \{ 0,k \}\right) \cap F = \varnothing$.

Thus, Let $N=\max \left\{ N_1, N_2 \right\} $
 \begin{align*}
        B_{n,k} \cap F &= \left( (orb(k,\fai^n) - \{ k \}) \cup (orb(k,\fai^{-n}) - \{ 0,k \}) \right) \cap F \\
        & = \left( (orb(k,\fai^n) - \{ k \}) \cap C \right) \cup \left( (orb(k,\fai^{-n}) - \{ 0,k \}) \cap F \right) \\
        & = \varnothing.
\end{align*}
which completes the proof.
\end{proof}

The above lemma shows, for any finite subset of positive integers and a fixed $k$, when $n$ is large enough, the $\fai^n$ orbit of $k$ can always ``escape" from this finite subset.

\subsection{on \texorpdfstring{$\ell^p$}{l\^{}p} space}\leavevmode

In this section, we want to characterize the chaoticity of the unbounded weighted pseudo-shift operator $wC_{\varphi}$ on the space $\ell^p$. and we adjust the notation for $x$, let $x(k)$  denote the number at the $k$-th position of $x$.
\begin{thm}\label{zhouqidianbiaoshi}
        Suppose $\fai$ is injective, and $\fai $ has no fixed point for any $n \in \mathbb{N}$. Then a point $x_N \in D\left[ \left( wC_{\varphi} \right) ^N \right] $ is a $N$-periodic points of $wC_\fai$ if and only if $x_N$ can be written as
        \begin{align*}
            x_N = \sum_{k\in G_N}{a_k x_{N,k}},
        \end{align*}
        where $a_k\in \mathbb{K}$ and
        \begin{align*}
            x_{N,k} = e_k + \sum_{n= 1}^{+ \infty}{\left[\frac{1}{w_k w_{\fai (k)} \cdots w_{\fai^{nN-1}(k)} }\right] e_{\fai^{nN}(k)}}
                          + \sum_{n= 1}^{+ \infty}{\left[w_{\fai^{-1}(k)} w_{\fai^{-2}(k)} \cdots w_{\fai^{-nN}(k)}\right] e_{\fai^{-nN}(k)}}.
        \end{align*} 
\end{thm}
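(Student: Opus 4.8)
The plan is to reduce the periodicity equation $(wC_{\varphi})^{N} x_N = x_N$ to a scalar recurrence relating the entries of $x_N$ along each $\varphi^N$-orbit, to solve that recurrence orbit by orbit, and then to reassemble the global solution using the orbit decomposition of Lemma \ref{guidaobianli}. Writing $x_N=\sum_k x_N(k)e_k$ and reading off the coefficient of $e_k$ from the explicit formula (\ref{dingyi1n}) for $(wC_\varphi)^N$, the identity $(wC_{\varphi})^{N} x_N=x_N$ is equivalent to the family of scalar equations
\begin{equation*}
x_N(k)=w_k\,w_{\varphi(k)}\cdots w_{\varphi^{N-1}(k)}\,x_N(\varphi^N(k)),\qquad k\in\N. \tag{$\star$}
\end{equation*}
Since $\varphi$ is injective and $\varphi^n$ is fixed-point free for all $n$, the map $\varphi^N$ is injective and has no periodic point, so each $\varphi^N$-orbit $A_{N,k}$ is an infinite chain; the extension (\ref{notation}) of $\varphi^{-n}$ together with the conventions $w_0=0$, $e_0=0$ will let me treat terminating backward chains uniformly.

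For the necessity direction I would fix a generator $k\in G_N$, set $a_k:=x_N(k)$, and propagate $(\star)$ along the orbit $A_{N,k}$. Moving forward, dividing in $(\star)$ (the weights being nonzero throughout the chaoticity section) gives $x_N(\varphi^N(m))=x_N(m)/(w_m\cdots w_{\varphi^{N-1}(m)})$; iterating from $m=k$ and telescoping the weight products yields $x_N(\varphi^{nN}(k))=a_k/(w_k\,w_{\varphi(k)}\cdots w_{\varphi^{nN-1}(k)})$, matching the first sum in $x_{N,k}$. Moving backward, applying $(\star)$ at $m=\varphi^{-nN}(k)$ (so that $\varphi^N(m)$ is the previous point) gives $x_N(\varphi^{-nN}(k))=a_k\,w_{\varphi^{-1}(k)}\cdots w_{\varphi^{-nN}(k)}$ by the analogous telescoping, matching the second sum; when the orbit possesses a point without preimage this product eventually meets $w_0=0$ and the tail vanishes, consistently with $e_0=0$. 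Because $\N=\bigsqcup_{k\in G_N}A_{N,k}$ by Lemma \ref{guidaobianli} and each entry of $x_N$ lies in exactly one orbit, summing the per-orbit solutions gives $x_N=\sum_{k\in G_N}a_k x_{N,k}$.

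For the sufficiency direction I would verify directly that each $x_{N,k}$ satisfies $(\star)$, whence by linearity and the entrywise action of $(wC_\varphi)^N$ any $x_N=\sum_{k\in G_N}a_k x_{N,k}$ lying in $D\left[(wC_{\varphi})^{N}\right]$ is $N$-periodic. Fixing a position $m$: if $m\in A_{N,k}$ the verification is the same telescoping computation as above, split into the cases $m=\varphi^{nN}(k)$ with $n\ge 0$ and with $n<0$; if $m\notin A_{N,k}$, then $\varphi^N(m)\notin A_{N,k}$ because distinct $\varphi^N$-orbits are disjoint by Lemma \ref{guidaobujiao}, so the coefficients of $e_m$ and of $e_{\varphi^N(m)}$ in $x_{N,k}$ both vanish and $(\star)$ reads $0=0$.

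The step requiring the most care is the backward propagation: one must check that the two genuinely distinct structural cases, a bi-infinite orbit versus an orbit possessing a point with no $\varphi^N$-preimage, are both captured by the single formula for $x_{N,k}$, which is exactly what the conventions $w_0=0$, $e_0=0$ and the extension (\ref{notation}) are designed to guarantee; in particular one must note that the smallest-integer generator $g_N(k)$ need not coincide with the dynamical starting point of the chain, yet the backward sum from $g_N(k)$ still terminates correctly by reaching $e_0$. Alongside this, I would remark that all manipulations are entrywise and hence legitimate for any $x_N\in D\left[(wC_{\varphi})^{N}\right]$, so no convergence argument beyond the standing domain hypothesis is needed, and that the nonvanishing of the weights is used only to divide in the forward step.
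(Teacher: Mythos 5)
Your proposal matches the paper's proof essentially step for step: you derive the same scalar recurrence (the paper's equation (\ref{zhouqidiedaifan1})), telescope it forward and backward along each $\varphi^N$-orbit to obtain $a_{\varphi^{nN}(k)}$ and $a_{\varphi^{-nN}(k)}$, reassemble via the orbit decomposition of Lemma \ref{guidaobianli}, and prove sufficiency by directly verifying that each $x_{N,k}$ is fixed by $(wC_\varphi)^N$. Your entrywise organization of the sufficiency check (including the $m \notin A_{N,k}$ case via Lemma \ref{guidaobujiao} and the explicit handling of terminating backward chains through the conventions $w_0=0$, $e_0=0$) is a cosmetic repackaging of the paper's series computation, and is if anything slightly more careful about the orbits possessing a point with no preimage.
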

\begin{proof}
Let $x_N =\sum_{k=1}^{\infty}{a_k e_k}$ be an $N$-periodic point of $wC_\fai$, then
\begin{align}
        (wC_\fai)^N (x_N) = (wC_\fai)^N (\sum_{k=1}^{\infty}{a_k e_k}) 
        = \sum_{k=1}^{\infty}{w_k w_{\fai(k)} \cdots w_{\fai^{N-1}(k)} a_{\fai^N (k)} e_k}
        = x_N
        = \sum_{k=1}^{\infty}{a_k e_k}.
\end{align}
Therefore, for any $ k \in \N$,
\begin{align}
        w_k w_{\fai(k)} \cdots w_{\fai^{N-1}(k)} a_{\fai^N (k)}  = a_k,
        \label{zhouqidiedaifan1}
    \end{align}
that is,
\begin{align}
        a_{\fai^N(k)} = \frac{1}{w_k w_{\fai(k)} \cdots w_{\fai^{N-1}(k)}} a_k
        \label{zhouqidiedai1}
\end{align}
Replacing $l = \fai^N(k)$ by  (\ref{zhouqidiedai1}) , we get
 \begin{align*}
        a_{\fai^{2N}(k)} = a_{\fai^N(l)} &= \frac{1}{w_l w_{\fai(l)} \cdots w_{\fai^{N-1}(l)}} a_l\\
        &= \frac{1}{w_{\fai^{N}(k)} w_{\fai^{N+1}(k)} \cdots w_{\fai^{2N-1}(k)}} \frac{1}{w_k w_{\fai(k)} \cdots w_{\fai^{N-1}(k)}} a_k\\
        &= \frac{1}{w_k w_{\fai(k)} \cdots w_{\fai^{2N-1}(k)}} a_k.
    \end{align*}
By induction, $\forall n \in \N$,
\begin{align}
        a_{\fai^{nN}(k)} = \frac{1}{w_k w_{\fai(k)} \cdots w_{\fai^{nN-1}(k)} } a_k.
        \label{zhouqidiedai2}
\end{align}
We note that $l = \fai^N(k)$, then $k = \fai^{-N}(l)$, Substitute it into (\ref{zhouqidiedaifan1}),
\begin{align*}
        a_{\fai^{-N}(l)}
        = [ w_{\fai^{-N}(l)} w_{\fai^{-N+1}(l)} \cdots w_{\fai^{-1}(l)} ] a_l 
        = [ w_{\fai^{-1}(l)} w_{\fai^{-2}(l)} \cdots w_{\fai^{-N}(l)} ] a_l .
\end{align*}
By induction, $\forall n \in \N$
\begin{align}
        a_{\fai^{-nN}(k)} 
        = [ w_{\fai^{-1}(k)} w_{\fai^{-2}(k)} \cdots w_{\fai^{-nN}(k)} ] a_k.
        \label{zhouqidiedaifan2}
    \end{align}
Hence for any $ k,n\in \N$, by (\ref{zhouqidiedai2}) , (\ref{zhouqidiedaifan2}) and Lemma \ref{guidaobianli}, 
    \begin{align*}
        x_N & = \sum_{k=1}^{\infty}{a_k e_k} 
              = \sum_{k \in G_N}{\left[ a_k e_k + \sum_{n=1}^{\infty}{a_{\fai^{nN}(k)} e_{\fai^{nN}(k)}} + \sum_{n=1}^{\infty}{a_{\fai^{-nN}(k)} e_{\fai^{-nN}(k)}} \right] }\\
            & = \sum_{k \in G_N}{\left[ a_k e_k + \sum_{n=1}^{\infty}{\frac{1}{w_k w_{\fai(k)} \cdots w_{\fai^{nN-1}(k)} } a_k e_{\fai^{nN}(k)}} 
                                                + \sum_{n=1}^{\infty}{[ w_{\fai^{-1}(k)} w_{\fai^{-2}(k)} \cdots w_{\fai^{-nN}(k)} ] a_k e_{\fai^{-nN}(k)}} \right] } \\
            & = \sum_{k \in G_N}{a_k \left[e_k + \sum_{n= 1}^{+ \infty}{\left[\frac{1}{w_k w_{\fai (k)} \cdots w_{\fai^{nN-1}(k)} }\right] e_{\fai^{nN}(k)}}
                          + \sum_{n= 1}^{+ \infty}{\left[w_{\fai^{-1}(k)} w_{\fai^{-2}(k)} \cdots w_{\fai^{-nN}(k)}\right] e_{\fai^{-nN}(k)}} \right] } \\
            & = \sum_{k \in G_N}{a_k x_{N,k}}.
    \end{align*}

As to the sufficient part of the proof, for any $k\in G_N$
     \begin{align*}
        wC_\fai ^{N}(x_{N,k}) 
        &=  wC_\fai ^{N}(e_k)  \\
        & \  \  + \sum_{n= 1}^{\infty}{\left[\frac{1}{w_k w_{\fai (k)} \cdots w_{\fai^{nN-1}(k)} }\right] wC_\fai ^{N}(e_{\fai^{nN}(k)} ) } \\
        & \  \  + \sum_{n= 1}^{\infty}{\left[w_{\fai^{-1}(k)} w_{\fai^{-2}(k)} \cdots w_{\fai^{-nN}(k)}\right] wC_\fai ^{N}(e_{\fai^{-nN}(k)})}  \\
        &=  w_{\fai^{-N}(k)} w_{\fai^{-(N-1)}(k)} \cdots w_{\fai^{-1}(k)} e_{\fai^{-N}(k)} \\
        & \  \  + \sum_{n= 1}^{\infty}{\left[\frac{1}{w_k w_{\fai (k)} \cdots w_{\fai^{nN-1}(k)} }\right] 
                   \left[ w_{\fai^{nN-N}(k)} w_{\fai^{nN-(N-1)}(k)} \cdots w_{\fai^{nN-1}(k)} \right] e_{\fai^{nN-N}(k)}}  \\
        & \  \  + \sum_{n= 1}^{\infty}{\left[w_{\fai^{-1}(k)} w_{\fai^{-2}(k)} \cdots w_{\fai^{-nN}(k)}\right] 
                   \left[ w_{\fai^{-nN-N}(k)} w_{\fai^{-nN-(N-1)}(k)} \cdots w_{\fai^{-nN-1}(k)} \right] e_{\fai^{-nN-N}(k)}}  \\
        &=  w_{\fai^{-1}(k)} w_{\fai^{-2}(k)} \cdots w_{\fai^{-N}(k)} e_{\fai^{-N}(k)} + e_k\\
        & \  \  + \sum_{n= 2}^{\infty}{\left[\frac{1}{w_k w_{\fai (k)} \cdots w_{\fai^{(n-1)N-1}(k)} }\right] e_{\fai^{(n-1)N}(k)}} \\
        & \  \  + \sum_{n= 1}^{\infty}{\left[w_{\fai^{-1}(k)} w_{\fai^{-2}(k)} \cdots w_{\fai^{-(n+1)N}(k)}\right] e_{\fai^{-(n+1)N}(k)}}  \\
        &=  e_k + \sum_{n= 1}^{\infty}{\left[\frac{1}{w_k w_{\fai (k)} \cdots w_{\fai^{nN-1}(k)} }\right] e_{\fai^{nN}(k)}} 
                + \sum_{n= 1}^{\infty}{\left[w_{\fai^{-1}(k)} w_{\fai^{-2}(k)} \cdots w_{\fai^{-nN}(k)}\right] e_{\fai^{-nN}(k)}}  \\
        &= x_{N,k}.
    \end{align*}
It implies that each $x_{N,k}$ is an N-periodic point of $wC_\fai$, thus $x_N = \sum_{k\in G_N}{a_k x_{N,k}}$ is also an N-periodic point of $wC_\fai$.
\end{proof}

\begin{rem}
It is easy to see that for any $k\in G_N$ and a fixed $N$, $x_{N,k}\left( j \right) \ne 0$ only when $j\in A_{N,k}$. And Lemma \ref{guidaobujiao} proves that the different orbits $A_{N,k}$ of different $k$ are disjoint, so that any two distinct $x_{N,k}$ have different non-zero locations, that is, for any $k_1,k_2$,$n \in \N$, $x_{N,k_1}(n) x_{N,k_2}(n) =0$. Thus they are always linearly independent.
\end{rem}


Theorem \ref{zhouqidianbiaoshi} actually illustrates the structure of the set of periodic points of $wC_\fai$: every periodic point of period $N$ can be expressed as linear combinations of countable $x_{N,k}$($k \in G_N$).
This can be interpreted as the existence of a set of ``basis" (i.e. ${ x_{N,k} | k \in G_N}$) for the set of periodic points with a fixed period $N$. To verify the denseness of the set of all periodic points is to compare 
the union of bases for all sets of periodic points (i.e. $\bigcup_{N \in \N}{ \{ x_{N,k} | k \in G_N\} }$) and the bases of $\ell^p$ (e.g. $\{ e_n | n\in \N \}$) in terms of how closely they are.

Next, we will show the necessary and sufficient condition for the chaotic weighted pseudo-shift operator~$wC_{\varphi}$ on the space $\ell^p$.

\begin{thm}\label{lphundun}
         Let $wC_{\varphi} $~be an unbounded weighted pseudo-shift operator on $\ell^p$, then $wC_{\varphi} $ is chaotic if and only if the following assertions hold:  \\
        \indent $(1)$ $\varphi $ is injective.\\
        \indent $(2)$  $\varphi ^n$ has no fixed point for any $n \in \mathbb{N} $.\\
        \indent $(3)$  $\forall k \in \N$, the following inequalities hold
        \begin{equation*}
        \sum_{n=1}^{+\infty}{\frac{1}{\left| w_kw_{\varphi \left( k \right)}...w_{\varphi ^{n-1}\left( k \right)} \right|^p}}<+\infty ,
        \end{equation*}
        and
        \begin{equation*}
        \sum_{n=1}^{+\infty}{\left| w_{\varphi ^{-1}\left( k \right)}w_{\varphi ^{-2}\left( k \right)}...w_{\varphi ^{-n}\left( k \right)} \right|}^p<+\infty.
        \end{equation*} 
\end{thm}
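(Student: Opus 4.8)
The plan is to use the standard decomposition of chaoticity: $wC_\varphi$ is chaotic exactly when it is hypercyclic \emph{and} its set of periodic points is dense. Hypercyclicity I will read off from Theorem \ref{wujiew}, and the periodic points I will analyze through the explicit description in Theorem \ref{zhouqidianbiaoshi}. The equivalence of (1) and (2) with hypercyclicity is already available, so the whole content of the theorem is the interplay between the two series in (3) and the density of periodic points.

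\emph{Sufficiency.} Assume (1)--(3). First I would observe that convergence of the forward series in (3) forces its general term to vanish, i.e.\ $|w_k w_{\varphi(k)}\cdots w_{\varphi^{n-1}(k)}|\to\infty$; taking $m_k=k$ this is precisely condition (3) of Theorem \ref{wujiew}, so $wC_\varphi$ is hypercyclic. For density of periodic points I would use the vectors $x_{N,k}$ of Theorem \ref{zhouqidianbiaoshi}: the two series defining $x_{N,k}$ are the step-$N$ subseries of the nonnegative convergent series in (3), so each $x_{N,k}\in\ell^p$, and a direct estimate gives
\[
\|x_{N,m}-e_m\|^p=\sum_{n\ge1}\frac{1}{|w_m\cdots w_{\varphi^{nN-1}(m)}|^p}+\sum_{n\ge1}|w_{\varphi^{-1}(m)}\cdots w_{\varphi^{-nN}(m)}|^p\le\sum_{j\ge N}\Big(\tfrac{1}{|w_m\cdots w_{\varphi^{j-1}(m)}|^p}+|w_{\varphi^{-1}(m)}\cdots w_{\varphi^{-j}(m)}|^p\Big)\xrightarrow[N\to\infty]{}0 .
\]
Hence each $e_m$ lies in the closure of the periodic points; since finite sums and scalar multiples of periodic points are again periodic (the period passing to a least common multiple), the periodic points form a subspace whose closure contains every $e_m$, and therefore equals $\ell^p$.

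\emph{Necessity.} Hypercyclicity gives (1) and (2) by Theorems \ref{danshe} and \ref{nofixedpoint}. For (3) I would first record an orbit-wise dichotomy: along a single $\varphi$-orbit, convergence of the forward series is independent of the base point, because the identity $\sum_{n\ge1}|w_k\cdots w_{\varphi^{n-1}(k)}|^{-p}=|w_k|^{-p}\big(1+\sum_{n\ge1}|w_{\varphi(k)}\cdots w_{\varphi^{n-1}(\varphi(k))}|^{-p}\big)$ propagates convergence to neighbours, and similarly for the backward series. Thus (3) is an orbit-wise statement, equivalent to the existence of a unit-modulus eigenvector of $wC_\varphi$ lying in $\ell^p$ on each orbit. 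I would then reduce to one orbit by projecting onto the coordinates of $A_k$, which is $wC_\varphi$-invariant, so that density of periodic points passes to density of the periodic points supported on $A_k$ inside $\ell^p(A_k)$. Writing any period-$N$ point supported on $A_k$ through its root-of-unity eigenvector decomposition as in Theorem \ref{zhouqidianbiaoshi} (its nonzero coordinates have modulus $|a|\,|w_k\cdots w_{\varphi^{nN-1}(k)}|^{-1}$ going forward and $|a|\,|w_{\varphi^{-1}(k)}\cdots w_{\varphi^{-nN}(k)}|$ going backward), membership in $\ell^p$ forces the corresponding step-$N$ series to converge.

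The hard part will be upgrading this step-$N$, single-residue convergence to the full series in (3). A single periodic point near $e_k$ controls only the arithmetic progression of step $N$ through $k$, and the period $N$ is not under our control, so step-$N$ convergence on one residue class does not by itself give step-$1$ convergence. To close this gap I would approximate the block $\sum_{r=0}^{L-1}e_{\varphi^r(k)}$ (rather than $e_k$ alone) by a periodic point $y$ with $\|y-\sum_{r=0}^{L-1}e_{\varphi^r(k)}\|<\tfrac12$: then $y$ is nonzero at $L$ consecutive orbit positions, so as soon as its period $N$ satisfies $N\le L$ those positions exhaust all residues modulo $N$, forcing the step-$N$ series to converge on every residue and hence the full series in (3) to converge. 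The genuine obstacle is controlling the period of the approximant, that is, ruling out that density could be sustained only by periodic points of ever-growing period concentrated on ``light'' residue classes while some fixed coordinate stays unreachable; I would resolve this through the orbit-wise all-or-nothing dichotomy together with a pigeonhole argument showing that if either series diverged then, for every $N$, some residue class carries infinite mass, and hence some fixed coordinate is annihilated by every periodic point, contradicting density.
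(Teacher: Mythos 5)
Your sufficiency argument is correct, and it takes a genuinely (if mildly) different route from the paper's. The paper approximates an arbitrary $y\in c_{00}$ supported in $\{1,\dots,m\}$ by a \emph{single} $N$-periodic point $\sum_{k\le m}y_k x_{N,k}$, and for this it needs the escape Lemma \ref{taoyi} to force $A_{N,k}\cap\{1,\dots,m\}=\{k\}$. You instead approximate each basis vector $e_m$ by $x_{N,m}$ (your tail estimate is exact, since the nonzero coordinates of $x_{N,m}-e_m$ sit at the distinct positions $\varphi^{\pm nN}(m)$, with the convention $e_0=0$ absorbing empty preimages) and then use that the periodic points form a linear subspace via least common multiples of periods. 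This dispenses with Lemma \ref{taoyi} entirely; the paper's version, in exchange, produces an explicit single periodic approximant to any finitely supported vector. Both are sound.

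The necessity half, however, has a genuine gap, and it is exactly the one you flagged: passing from step-$N$ convergence on one residue class to the full series in (3). Your proposed repair does not work, because the combinatorial statement it rests on --- ``if either series diverges then some \emph{fixed} coordinate is annihilated by every periodic point'' --- is false: the heavy residue class depends on $N$. Concretely, along a one-sided orbit (as in Example \ref{li_njia1}, so the backward series are finite sums and impose no constraint) let the terms of the full forward series be $t_j=1$ when $j$ is a power of $2$ and $t_j=2^{-j}$ otherwise; the corresponding (unbounded) weights are $|w_j|^p=t_{j-1}/t_j$. Then $\sum_j t_j=\infty$, yet every orbit position admits a period whose class is light: for $i=0$ take any odd $N\ge 3$ (no $nN$ is a power of $2$), and for $i\ge 1$ take $N=2^K>i$, since $i+nN\equiv i\not\equiv 0 \ (\mathrm{mod}\ 2^K)$ rules out powers $2^m$ with $m\ge K$, while $i+nN>2^K$ rules out $m<K$, so $\sum_{n\ge 1}t_{i+nN}$ is geometric. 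Hence no coordinate is ``unreachable,'' and no pigeonhole argument of the kind you sketch can close the proof; controlling the period of the approximant by widening the block to $\sum_{r=0}^{L-1}e_{\varphi^r(k)}$ fails for the same reason, as nothing forces the approximant's period below $L$.

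What actually closes the gap --- the paper's key trick, absent from your sketch --- is that a genuine $N$-periodic point $x_N$ lies in the domain of \emph{every} power, so $wC_{\varphi}^{\,i}(x_N)\in\ell^p$ for all $i=0,1,\dots,N-1$. Applying these $N$ iterates to one and the same approximant sweeps its support through all residues: the coordinates of $wC_{\varphi}^{\,i}(x_{N,k})$ have moduli $|a_k|\,\bigl|w_k w_{\varphi(k)}\cdots w_{\varphi^{nN-(i+1)}(k)}\bigr|^{-1}$, i.e.\ products of every length $nN-i$, so membership of all $N$ images in $\ell^p$ yields convergence of the full forward (and, symmetrically, backward) series at $k$ in one stroke, with no control of $N$ needed. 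Note this step uses the composition reading of $D\bigl[(wC_{\varphi})^N\bigr]$, under which a period-$N$ point belongs to the domain of every smaller power; with the purely formal domain $\left(\ref{dingyi2n}\right)$, the weights $t_j$ above produce a formally $N$-periodic vector in $\ell^p$ whose very first image leaves $\ell^p$ --- precisely the pathology your pigeonhole would have to exclude and cannot. Your remaining ingredients --- the multiplicative transfer of convergence along a $\varphi$-orbit and the reduction to generators $k=g(l)$ --- are correct and coincide with the paper's final step; and your instinct to work with periodic points chosen \emph{close to} $e_k$ is sound (it even patches a small slip in the paper, which asserts (3) for all $k\in G_N$ from an arbitrary periodic point whose coefficient $a_k$ might vanish), but it must be combined with the iterate argument above, not with residue counting.
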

\begin{proof}
For the necessary part of the proof, assume that $wC_\fai$ is chaotic, it is naturally also hypercyclic, then the condition $(1)$ and $(2)$ has been proved in the Theorem \ref{wujiew}, so we need only to show the condition $(3)$. Suppose $x_N \in D\left[ \left( wC_{\varphi} \right) ^N \right]$ is a periodic point of $wC_\fai$ with period $N$,  by Theorem \ref{zhouqidianbiaoshi},
\begin{align*}
        x_N &= \sum_{k\in G_N}{a_k x_{N,k}} \\
            &= \sum_{k\in G_N}{a_k \left[e_k + \sum_{n= 1}^{+ \infty}{\left[\frac{1}{w_k w_{\fai (k)} \cdots w_{\fai^{nN-1}(k)} }\right] e_{\fai^{nN}(k)}}
                   + \sum_{n= 1}^{+ \infty}{\left[w_{\fai^{-1}(k)} w_{\fai^{-2}(k)} \cdots w_{\fai^{-nN}(k)}\right] e_{\fai^{-nN}(k)}}\right]}.
\end{align*}
Then for any $i=0,1,2,\cdots,N-1$,
 \begin{align*}
        wC_\fai ^{i}(x_N) 
        &= \sum_{k\in G_N}{a_k \left[wC_\fai ^{i}\left(x_{N,k}\right)\right]} \\
        &= \sum_{k\in G_N}{a_k \left[\sum_{n= 1}^{+ \infty}{\left[\frac{1}{w_k w_{\fai (k)} \cdots w_{\fai^{nN-(i+1)}(k)} }\right] e_{\fai^{nN-i}(k)}}
        + \sum_{n= 0}^{+ \infty}{\left[w_{\fai^{-1}(k)} w_{\fai^{-2}(k)} \cdots w_{\fai^{-(nN+i)}(k)}\right] e_{\fai^{-(nN+i)}(k)}}\right]}.
\end{align*}
Since $x_{N}$ is an $N$-periodic point of $wC_\fai$, it follows that $wC_\fai ^{i}(x_N) \in l^p.$ Therefore $\forall k \in G_N$,
 \begin{align}
        \sum_{n=1}^{+\infty}{\frac{1}{\left| w_kw_{\varphi \left( k \right)}...w_{\varphi ^{nN-(i+1)}\left( k \right)} \right|^p}}<+\infty,
        \label{chongfen1shi}
    \end{align}
     \begin{align}
        \sum_{n=1}^{+\infty}{\left| w_{\varphi ^{-1}\left( k \right)}w_{\varphi ^{-2}\left( k \right)}...w_{\varphi ^{-(nN+i)}\left( k \right)} \right|}^p<+\infty.
        \label{chongfen2shi}
    \end{align}
Since (\ref{chongfen1shi}) and (\ref{chongfen2shi}) hold for all $i=0,1,2,\cdots,N-1$, then $\forall k \in G_N$,
\begin{align*}
        \sum_{n=1}^{+\infty}{\frac{1}{\left| w_kw_{\varphi \left( k \right)}...w_{\varphi ^{n-1}\left( k \right)} \right|^p}}<+\infty ,\\
        \sum_{n=1}^{+\infty}{\left| w_{\varphi ^{-1}\left( k \right)}w_{\varphi ^{-2}\left( k \right)}...w_{\varphi ^{-n}\left( k \right)} \right|}^p<+\infty.
\end{align*}
For all $l \in \N$, suppose $k=g(l)$, there exists $m\in \mathbb{Z}$ such that $l=\varphi ^m\left( k \right)$. By Lemma \ref{Glian}, $k \in G \subseteq G_N$, then
\begin{align*}
      \sum_{n=1}^{+\infty}{\frac{1}{\left| w_lw_{\varphi \left( l \right)}...w_{\varphi ^{n-1}\left( l \right)} \right|^p}} &= \left| w_kw_{\varphi \left( k \right)}...w_{\varphi ^{m-1}\left( k \right)} \right| \sum_{n=1}^{+\infty}{\frac{1}{\left| w_kw_{\varphi \left( k \right)}...w_{\varphi ^{n+m-1}\left( k \right)} \right|^p}}<+\infty ,\\
        \sum_{n=1}^{+\infty}{\left| w_{\varphi ^{-1}\left( l \right)}w_{\varphi ^{-2}\left( l \right)}...w_{\varphi ^{-n}\left( l \right)} \right|}^p &= \frac{1}{\left| w_kw_{\varphi \left( k \right)}...w_{\varphi ^{m-1}\left( k \right)} \right|} \sum_{n=1}^{+\infty}{\left| w_{\varphi ^{-1}\left( k \right)}w_{\varphi ^{-2}\left( k \right)}...w_{\varphi ^{-n+m}\left( k \right)} \right|}^p<+\infty.
    \end{align*}

For the proof of sufficiency, assume that the condition $(1)$,$(2)$ and $(3)$ hold, then $wC_\fai$ is hypercyclic by Theorem \ref{wujiew}. Now it suffices to show that $wC_\fai$  has a dense set of periodic points.

For any vector $y \in c_{00}$, there exists $m \in \mathbb{N}$ such that $y\left( k \right)=0$ for any $ k>m$, that is,
\begin{align*}
        y = (y_1,y_2,\cdots,y_m,0,\cdots) = \sum_{k=1}^{m}{y_k e_k}.
\end{align*}
By lemma \ref{taoyi}, for each $k=1,2,\cdots,m$, there exists $N_k$ such that $ B_{n,k} \cap \{1,2,\cdots,m \} = \varnothing $ for any $n > N_k$. Take $N_a=\max_{k=1,2,\cdots,m}\{N_k\}$, then for each $k=1,2,\cdots,m$, $ B_{n,k} \cap \{1,2,\cdots,m \} = \varnothing $ for any $n > N_a$. Hence, 
\begin{align*}
        A_{n,k} \cap \{1,2,\cdots,m \} = k,
\end{align*}
that is, the intersection of $\{1,2,\cdots,m \}$ with the $\fai^{n}$-orbit $A_{n,k}$ is only $\left\{ k \right\} $.

By the condition (3), $\forall \varepsilon >0$, there exists $N_b>0$ such that 
\begin{align}
        \sum_{n>N_a}^{}{\frac{1}{\left| w_kw_{\varphi \left( k \right)}...w_{\varphi ^{n-1}\left( k \right)} \right|^p}}<\frac{\varepsilon}{2m} 
        \label{shoulian1},\\
        \sum_{n>N_b}^{}{\left| w_{\varphi ^{-1}\left( k \right)}w_{\varphi ^{-2}\left( k \right)}...w_{\varphi ^{-n}\left( k \right)} \right|}^p<\frac{\varepsilon}{2m}.
        \label{shoulian2}
\end{align}
Let $N=\max\left\{N_a,N_b\right\}$, consider
\begin{align*}
        x_{N,k} = e_k + \sum_{n= 1}^{+ \infty}{\left[\frac{1}{w_k w_{\fai (k)} \cdots w_{\fai^{nN-1}(k)} }\right] e_{\fai^{nN}(k)}}
                      + \sum_{n= 1}^{+ \infty}{\left[w_{\fai^{-1}(k)} w_{\fai^{-2}(k)} \cdots w_{\fai^{-nN}(k)}\right] e_{\fai^{-nN}(k)}},
    \end{align*}
the above discussion ensures that only the $k$-th term among the first $m$ terms of $x_{N,k}$ is not 0.

First, we show that $x_{N,k}$ is well-defined and is a periodic point. We have proved that $\left( wC_{\varphi} \right)^N\left(x_{N,k}\right) =x_{N,k}$ in Theorem \ref{zhouqidianbiaoshi}. By condition (3),
\begin{align*}
         \lVert x_{N,k} \rVert &=\left( 1+\sum_{n=1}^{+\infty}{\frac{1}{\left| w_kw_{\varphi \left( k \right)}...w_{\varphi ^{nN-1}\left( k \right)} \right|^p}}+\sum_{n=1}^{+\infty}{\left| w_{\varphi ^{-1}\left( k \right)}w_{\varphi ^{-2}\left( k \right)}...w_{\varphi ^{-nN}\left( k \right)} \right|}^p \right) ^{\frac{1}{p}} \\
         &\le \left( 1+\sum_{n=1}^{+\infty}{\frac{1}{\left| w_kw_{\varphi \left( k \right)}...w_{\varphi ^{n-1}\left( k \right)} \right|^p}}+\sum_{n=1}^{+\infty}{\left| w_{\varphi ^{-1}\left( k \right)}w_{\varphi ^{-2}\left( k \right)}...w_{\varphi ^{-n}\left( k \right)} \right|}^p \right) ^{\frac{1}{p}}\\
         &< +\infty,
\end{align*}
hence $\left( wC_{\varphi} \right) ^N\left( x_{N,k} \right) =x_{N,k}$ and $x_{N,k} \in  D\left( \left[ wC_{\fai}\right]^{N}  \right) $. 

Next, We prove the denseness of the set of periodic points. Let
\begin{align*}
        x = \sum_{k=1}^{m}{y_k x_{N,k}},
    \end{align*}
as is easily seen, $x$ is also the $N$-periodic point of $wC_\fai$. Suppose $y_0=\max_{k=1,2,\cdots,m}{|y_k|}$, by $\left( \ref{shoulian1}\right)$,$\left( \ref{shoulian2} \right)$ and the choice of $N$,
 \begin{align*}
         & \text{\quad \, } \lVert x - y \rVert \\
         & = \left\lVert \sum_{k=1}^{m}{y_k \left[ \sum_{n= 1}^{+ \infty}{\left[\frac{1}{w_k w_{\fai (k)} \cdots w_{\fai^{nN-1}(k)} }\right] e_{\fai^{nN}(k)}}
          + \sum_{n= 1}^{+ \infty}{\left[w_{\fai^{-1}(k)} w_{\fai^{-2}(k)} \cdots w_{\fai^{-nN}(k)}\right] e_{\fai^{-nN}(k)}}) \right] } \right\rVert \\
          & \le \left\lVert \sum_{k=1}^{m}{y_k  \sum_{n= 1}^{+ \infty}{\left[\frac{1}{w_k w_{\fai (k)} \cdots w_{\fai^{nN-1}(k)} }\right] e_{\fai^{nN}(k)}}
          }\right\rVert+ \left\lVert{\sum_{k=1}^{m}{y_k\sum_{n= 1}^{+ \infty}{\left[w_{\fai^{-1}(k)} w_{\fai^{-2}(k)} \cdots w_{\fai^{-nN}(k)}\right] e_{\fai^{-nN}(k)}})  } }\right\rVert \\   
          & \le y_0\sum_{k=1}^{m}{\left\lVert   \sum_{n= 1}^{+ \infty}{\left[\frac{1}{w_k w_{\fai (k)} \cdots w_{\fai^{nN-1}(k)} }\right] e_{\fai^{nN}(k)}}
          \right\rVert}+ y_0\sum_{k=1}^{m}{\left\lVert{\sum_{n= 1}^{+ \infty}{\left[w_{\fai^{-1}(k)} w_{\fai^{-2}(k)} \cdots w_{\fai^{-nN}(k)}\right] e_{\fai^{-nN}(k)}})  } \right\rVert} \\  
          & <\frac{y_0}{2}\varepsilon +\frac{y_0}{2}\varepsilon=y_0 \varepsilon.
    \end{align*}

Hence, in view of the denseness of $c_{00}$ in $\ell^p$, we infer that the set of periodic points of $wC_\fai$  is dense in $\ell^p$ as well, i.e, the operator $wC_\fai$ is chaotic.
\end{proof}

\begin{ex}
Let the mapping $\fai$ be defined by $\varphi \left( n \right) =\begin{cases}
	4k+2,&		n=4k\\
	4k-3,&		n=4k+1 ~~~~~~ \left(k\ge 1\right) \\
        n+1,&     \text{else}
        \end{cases}$ 
    and let $w=\left( w_n \right) _{n\in \mathbb{N}}$ with $w_n =\begin{cases}
	w^{-n},&		n=4k+1 ~~~~~~ \left(k\ge 1\right) \\
        w^n,&     \text{else}
        \end{cases}$ 
    where $\left| w \right|>1$. Then one easily verifies that $\fai$ is injective, $\varphi ^n$ has no fixed point for any $n \in \mathbb{N} $ and 
\begin{subequations}
\begin{align*}
        \sum_{n=1}^{+\infty}{\frac{1}{\left| w_kw_{\varphi \left( k \right)}...w_{\varphi ^{n-1}\left( k \right)} \right|^p}}<+\infty ,\\
        \sum_{n=1}^{+\infty}{\left| w_{\varphi ^{-1}\left( k \right)}w_{\varphi ^{-2}\left( k \right)}...w_{\varphi ^{-n}\left( k \right)} \right|}^p<+\infty.
\end{align*}
\end{subequations}
   for each $k\in \mathbb{N}$.  Hence, by the Theorem \ref{lphundun}, the unbounded operator $wC_\fai$ is chaotic.   
\end{ex}

\subsection{on \texorpdfstring{$c_0$} a  space}\leavevmode

In this section, we extend the characterization of the unbounded chaotic weighted pseudo-shift operator to the space $c_0$.

\begin{thm}\label{hundunchongyao}
         Let $wC_{\varphi} $~be an unbounded weighted pseudo-shift operator on $c_0$, then $wC_{\varphi} $ is chaotic if and only if the following assertions hold:  \\
        \indent $(1)$ $\fai$ is injective. \\
        \indent $(2)$ $\fai ^n$ has no fixed point for any $n \in \mathbb{N} $.\\
        \indent $(3)$ For each $k \in \N$, 
        \begin{equation*}
        \lim_{n\rightarrow \infty}\left| w\left( k \right) w\left( \fai \left( k \right) \right) \cdots w\left( \fai ^{n-1}\left( k \right) \right) \right|=\infty,
        \end{equation*}
        and
        \begin{equation*}
        \lim_{n\rightarrow \infty}\left| w(\fai^{-1}(k)) w(\fai^{-2}(k)) \cdots w(\fai^{-n}(k)) \right|=0.
        \end{equation*}
    \end{thm}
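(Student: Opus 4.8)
The plan is to mirror the proof of Theorem \ref{lphundun}, replacing the summability ($\ell^p$) requirements by the corresponding convergence ($c_0$) requirements throughout. The structural results of Subsection 3.1---in particular Theorem \ref{zhouqidianbiaoshi}, Lemma \ref{guidaobujiao}, Lemma \ref{Glian} and Lemma \ref{taoyi}---are purely combinatorial and carry over verbatim, since the algebraic recursion $(\ref{zhouqidiedai1})$ describing an $N$-periodic point does not depend on the ambient sequence space. Thus an $N$-periodic point still has the form $x_N=\sum_{k\in G_N}a_k x_{N,k}$, and the only thing that changes is the criterion for each $x_{N,k}$ to belong to the space.

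For the necessity, I would first note that a chaotic operator is hypercyclic, so Corollary \ref{c0chaoxunhuan} immediately yields $(1)$ and $(2)$. For $(3)$ I would exploit the denseness of the periodic points: given $l\in\N$, choose a periodic point $x$ (of some period $N$) with $\|x-e_l\|_\infty<1/2$, so that $x(l)\neq0$. By Theorem \ref{zhouqidianbiaoshi} together with the disjointness of the orbits $A_{N,k}$ (Lemma \ref{guidaobujiao}), $x$ splits as a sum of pieces $a_k x_{N,k}$ with pairwise disjoint supports; since $x(l)\neq0$ and $l\in A_{N,g_N(l)}$, the coefficient $a_{g_N(l)}$ must be nonzero, whence $x_{N,g_N(l)}\in c_0$. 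Because every coordinate of a $c_0$ element tends to $0$ while, by Lemma \ref{faiwujie} and Lemma \ref{faiwujie2}, the positions $\varphi^{\pm nN}(g_N(l))$ tend to infinity, this forces $|w_k\cdots w_{\varphi^{nN-1}(k)}|\to\infty$ and $|w_{\varphi^{-1}(k)}\cdots w_{\varphi^{-nN}(k)}|\to0$ for $k=g_N(l)$. To upgrade these step-$N$ limits to the step-$1$ limits in $(3)$, I would apply $(wC_\varphi)^i$ for $i=0,1,\dots,N-1$ to $x$: each $(wC_\varphi)^i(x)$ is again an $N$-periodic point, hence lies in $c_0$, and its coordinates are the same weight products with the index shifted by $i$, so letting $i$ range over all residues modulo $N$ covers every $n$. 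Finally I would transport the two limits from the generator to $l$ itself, and thence to all of $\N$, exactly as in the last display of the proof of Theorem \ref{lphundun}: writing $l=\varphi^m(k)$ with $k=g(l)\in G\subseteq G_N$ (Lemma \ref{Glian}) and factoring out the fixed product $|w_k\cdots w_{\varphi^{m-1}(k)}|$ turns the limits at $k$ into the limits at $l$.

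For the sufficiency, I would first observe that $(3)$ gives, for each $k$, the genuine limit $|w(k)\cdots w(\varphi^{m-1}(k))|\to\infty$, which is stronger than the subsequential condition of Corollary \ref{c0chaoxunhuan}; hence $wC_\varphi$ is hypercyclic. It then remains to exhibit a dense set of periodic points. Given $y=\sum_{k=1}^{m}y_k e_k\in c_{00}$ and $\varepsilon>0$, I would choose $N$ large enough so that, simultaneously, Lemma \ref{taoyi} makes $B_{N,k}\cap\{1,\dots,m\}=\varnothing$ for each $k\le m$ (so the $x_{N,k}$ have pairwise disjoint supports and agree with $e_k$ on the first $m$ coordinates), and, using the full limits in $(3)$, every tail entry $1/|w_k\cdots w_{\varphi^{nN-1}(k)}|$ and $|w_{\varphi^{-1}(k)}\cdots w_{\varphi^{-nN}(k)}|$ with $n\ge1$ and $k\le m$ is $<\varepsilon/\max_k|y_k|$. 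Setting $x=\sum_{k=1}^{m}y_k x_{N,k}$---an $N$-periodic point lying in $c_0$ by $(3)$---the difference $x-y$ vanishes on $\{1,\dots,m\}$ and is supported on the disjoint tails, so $\|x-y\|_\infty\le\max_k|y_k|\cdot\max(\text{tail entries})<\varepsilon$.

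The main obstacle, and the one genuine departure from the $\ell^p$ argument, is the passage from controlling tail \emph{sums} to controlling the \emph{supremum} of the tail entries. In the $\ell^p$ case one uses that a convergent series has small tails, so it suffices to discard finitely many terms; in $c_0$ there is no series to truncate, and one must instead bound every surviving entry at once. What makes this possible is precisely that $(3)$ is stated as a limit over \emph{all} $n$ (not merely along a subsequence, as in the hypercyclicity criterion): for $N$ large, every forward product $|w_k\cdots w_{\varphi^{nN-1}(k)}|$ with $n\ge1$ is large and every backward product is small, so the entire tail---not just a tail sum---is uniformly small. This uniform smallness is exactly what the sup-norm estimate in the sufficiency needs, and, dually, it is exactly what the necessity direction reads off from the membership $x_{N,k}\in c_0$.
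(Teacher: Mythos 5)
Your proposal is correct and follows essentially the same route as the paper: necessity via the periodic-point representation of Theorem \ref{zhouqidianbiaoshi}, applying $(wC_{\varphi})^i$ for $i=0,\dots,N-1$ to upgrade step-$N$ limits to step-$1$ limits and transporting from generators to arbitrary $l$ via Lemma \ref{Glian}; sufficiency via Corollary \ref{c0chaoxunhuan} plus a dense set of periodic points built from the $x_{N,k}$ with $N$ chosen large through Lemma \ref{taoyi} and the sup-norm tail estimate. In fact your necessity step is slightly more careful than the paper's, which takes an arbitrary periodic point and implicitly assumes the relevant coefficient $a_k$ is nonzero, whereas you use denseness to pick a periodic point close to $e_l$ so that $a_{g_N(l)}\neq 0$ is guaranteed.
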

\begin{proof}
For the necessary part of the proof, assume that $wC_\fai$ is chaotic, it is naturally also hypercyclic, then the condition $(1)$ and $(2)$ has been proved in the Corollary \ref{c0chaoxunhuan}, so we need only to show the condition $(3)$. Suppose $x_N \in D\left[ \left( wC_{\varphi} \right) ^N \right]$ is a periodic point of $wC_\fai$ with period $N$,  by Theorem  \ref{zhouqidianbiaoshi},
\begin{align*}
        x_N &= \sum_{k\in G_N}{a_k x_{N,k}} \\
            &= \sum_{k\in G_N}{a_k \left[e_k + \sum_{n= 1}^{+ \infty}{\left[\frac{1}{w_k w_{\fai (k)} \cdots w_{\fai^{nN-1}(k)} }\right] e_{\fai^{nN}(k)}}
                   + \sum_{n= 1}^{+ \infty}{\left[w_{\fai^{-1}(k)} w_{\fai^{-2}(k)} \cdots w_{\fai^{-nN}(k)}\right] e_{\fai^{-nN}(k)}}\right]}.
\end{align*}
Then for any $i=0,1,2,\cdots,N-1$
 \begin{align*}
        wC_\fai ^{i}(x_N) 
        &= \sum_{k\in G_N}{a_k \left[wC_\fai ^{i}\left(x_{N,k}\right)\right]} \\
        &= \sum_{k\in G_N}{a_k \left[\sum_{n= 1}^{+ \infty}{\left[\frac{1}{w_k w_{\fai (k)} \cdots w_{\fai^{nN-(i+1)}(k)} }\right] e_{\fai^{nN-i}(k)}}
        + \sum_{n= 0}^{+ \infty}{\left[w_{\fai^{-1}(k)} w_{\fai^{-2}(k)} \cdots w_{\fai^{-(nN+i)}(k)}\right] e_{\fai^{-(nN+i)}(k)}}\right]}.
\end{align*}
Since $x_{N}$ is an N-periodic point of $wC_\fai$ , it follows that $wC_\fai ^{i}(x_N) \in c_0$. And we know that for different $wC_\fai ^{i}(x_{N,k})$, the non-zero positions are different. By Lemma \ref{faiwujie}, $\fai^{nN-i}(k) \you \infty \ (n \you \infty)$, then $\forall k \ \in G_N$
\begin{align}
        \frac{1}{\left |w_k w_{\fai (k)} \cdots w_{\fai^{nN-(i+1)}(k)} \right|}  \you 0, \  \ ( n \you \infty)
        \label{chongfen1shi2}
\end{align}
If $\forall j \in \N$, $\fai^{-j}(k) \ne 0$, we infer by Lemma \ref{faiwujie2} that $\fai^{-(nN+i)}(k) \you \infty \ (n \you \infty)$, and since  $wC_\fai ^{i}(x_{N,k}) \in c_0$, then $\forall k \ \in G_N$
\begin{align}
         w_{\fai^{-1}(k)} w_{\fai^{-2}(k)} \cdots w_{\fai^{-(nN+i)}(k)}  \you 0, \  \ ( n \you \infty)
         \label{chongfen2shi2}
    \end{align}
If $\exists j_1 \in \N$, $\fai^{-j_1}(k) = 0$ (which implies that $j >j_1$, $\fai^{-j}(k) = 0$ ), by the nonation in (\ref{notation}), $w_{\fai^{-j}(k)} = w_0 =0$. Thus, when $n$ is large enough , it follows that $w_{\fai^{-1}(k)} w_{\fai^{-2}(k)} \cdots w_{\fai^{-(nN+i)}(k)} = 0$ so that (\ref{chongfen2shi2}) holds.

since (\ref{chongfen1shi2}) and (\ref{chongfen2shi2}) hold for all $i=0,1,2,\cdots,N-1$, then $\forall  k \in G_N$
\begin{align*}
        \lim_{n\rightarrow \infty}\left| w\left( k \right) w\left( \fai \left( k \right) \right) \cdots w\left( \fai ^{n-1}\left( k \right) \right) \right|=\infty,\\
        \lim_{n\rightarrow \infty}\left| w(\fai^{-1}(k)) w(\fai^{-2}(k)) \cdots w(\fai^{-n}(k)) \right|=0.
    \end{align*}
For all $l \in \N$, suppose $k=g(l)$, then there exists $m\in\mathbb{Z}$ such  that $l=\varphi ^m\left( k \right)$. By Lemma \ref{Glian}, $k \in G \subseteq G_N$, then
\begin{align*}
        \lim_{n\rightarrow \infty}\left| w\left( l \right) w\left( \fai \left( l \right) \right) \cdots w\left( \fai ^{n-1}\left( l \right) \right) \right|
      &= \frac{1}{\left| w_kw_{\varphi \left( k \right)}...w_{\varphi ^{m-1}\left( k \right)} \right|} \lim_{n\rightarrow \infty}\left| w\left( k \right) w\left( \fai \left( k \right) \right) \cdots w\left( \fai ^{n+m-1}\left( k \right) \right) \right| = \infty,\\
        \lim_{n\rightarrow \infty}\left| w(\fai^{-1}(l)) w(\fai^{-2}(l)) \cdots w(\fai^{-n}(l)) \right|
      &=\left| w_kw_{\varphi \left( k \right)}...w_{\varphi ^{m-1}\left( k \right)} \right| \lim_{n\rightarrow \infty}\left| w(\fai^{-1}(k)) w(\fai^{-2}(k)) \cdots w(\fai^{-n+m}(k)) \right| =0.
\end{align*}

For the proof of sufficiency, assume that the condition $(1)$,$(2)$ and $(3)$ hold,  then $wC_\fai$ is hypercyclic by Corollary \ref{c0chaoxunhuan}. Now it suffices to show that $wC_\fai$  has a dense set of periodic points.
For any vector $y \in c_{00}$, there exists $m \in \mathbb{N}$ such that $y\left( k \right)=0$ for any $ k>m$, that is,
\begin{align*}
        y = (y_1,y_2,\cdots,y_m,0,\cdots) = \sum_{k=1}^{m}{y_k e_k}.
\end{align*}
By lemma \ref{taoyi}, for each $k=1,2,\cdots,m$, there exists $N_k$ such that $ B_{n,k} \cap \{1,2,\cdots,m \} = \varnothing $ for any $n > N_k$. Take $N_0=\max_{k=1,2,\cdots,m}\{N_k\}$, then $ B_{n,k} \cap \{1,2,\cdots,m \} = \varnothing $ for each $k=1,2,\cdots,m$ and any $n > N_0$. Hence, 
\begin{align*}
        A_{n,k} \cap \{1,2,\cdots,m \} = k,
\end{align*}
that is, the intersection of $\{1,2,\cdots,m \}$ with the $\fai^{n}$ orbit $A_{n,k}$ is only $\left\{ k \right\} $.
For any $N>N_0$, consider
\begin{align*}
        x_{N,k} = e_k + \sum_{n= 1}^{+ \infty}{\left[\frac{1}{w_k w_{\fai (k)} \cdots w_{\fai^{nN-1}(k)} }\right] e_{\fai^{nN}(k)}}
                      + \sum_{n= 1}^{+ \infty}{\left[w_{\fai^{-1}(k)} w_{\fai^{-2}(k)} \cdots w_{\fai^{-nN}(k)}\right] e_{\fai^{-nN}(k)}}.
    \end{align*}
It is clear that $x_{N,k}$ is an $N$-periodic point of $wC_\fai$ and the above discussion ensures that only the $k$-th term among the first $m$ terms of $x_{N,k}$ is not 0.

Let us first show  $x_{N,k}$ is well-defined. We note that for any $l>m$,
$$
x_{N,k}\left( l \right) =\begin{cases}
	\left| \frac{1}{w_kw_{\varphi \left( k \right)}\cdots w_{\varphi ^{nN-1}\left( k \right)}} \right|&		\text{,}\exists n\in \mathbb{N}\,\,s.t.\,\,l=\varphi ^{nN}\left( k \right) \text{,}\\
	\left| w_{\varphi ^{-1}\left( k \right)}w_{\varphi ^{-2}\left( k \right)}\cdots w_{\varphi ^{-nN}\left( k \right)} \right|&		\text{,}\exists n\in \mathbb{N}\,\,s.t.\,\,l=\varphi ^{-nN}\left( k \right) \text{,}\\
	0&		\text{,\,\,else.}\\
\end{cases}
$$
It follows from condition $(3)$ that for any $\forall \varepsilon >0$, there exsits $N_1>0$ such that for each $n>N_1$,
\begin{align*}
   \frac{1}{\left| w_kw_{\varphi \left( k \right)}\cdots w_{\varphi ^{nN-1}\left( k \right)} \right|}<\varepsilon,\\ 
   \left| w_{\varphi ^{-1}\left( k \right)}w_{\varphi ^{-2}\left( k \right)}\cdots w_{\varphi ^{-nN}\left( k \right)} \right|<\varepsilon. 
\end{align*}
 If there exists $ n\in \mathbb{N}\,\,s.t.\,\,l=\varphi ^{nN}\left( k \right)$, then it follows from lemma \ref{n1n2} that there exists $l_1\in \mathbb{N}$ such that for any $l>l_1$,  $n>N_1$. If there exists $ n\in \mathbb{N}\,\,s.t.\,\,l=\varphi ^{-nN}\left( k \right)$, then it follows from lemma \ref{n1n2} that there exists $l_2\in \mathbb{N}$ such that for any $l>l_2$,  $n>N_1$. Let $l_0=\max \left\{ l_1,l_2 \right\}$, then for any $l>l_0$, we have $x_{N,k}\left( l \right) <\varepsilon$ which implies  $x_{N,k}\in c_0$.

Next, We prove the denseness of the set of periodic points. Let
\begin{align*}
        x = \sum_{k=1}^{m}{y_k x_{N,k}},
\end{align*}
As is easily seen, $x$ is also the $N$-periodic point of $wC_\fai$. Suppose $y_0=\max_{k=1,2,\cdots,m}{|y_k|}$, then when $n \you 0$,
by condition $(3)$
\begin{align*}
        & \text{\quad \, } \lVert x - y \rVert \\
        & = \left\lVert \sum_{k=1}^{m}{y_k \left[ \sum_{n= 1}^{+ \infty}{\left[\frac{1}{w_k w_{\fai (k)} \cdots w_{\fai^{nN-1}(k)} }\right] e_{\fai^{nN}(k)}}
          + \sum_{n= 1}^{+ \infty}{\left[w_{\fai^{-1}(k)} w_{\fai^{-2}(k)} \cdots w_{\fai^{-nN}(k)}\right] e_{\fai^{-nN}(k)}}) \right] } \right\rVert \\
        & \le y_0 \max \left\{ \sup_{n\in \N}{\left| \frac{1}{w_k w_{\fai (k)} \cdots w_{\fai^{nN-1}(k)} }\right|},\  \sup_{n\in \N}{\left| w_{\fai^{-1}(k)} w_{\fai^{-2}(k)} \cdots w_{\fai^{-nN}(k)}\right|} \right\}
        \you 0.
\end{align*}
Hence,  in view of the denseness of $c_{00}$ in $c_0$, we infer that the set of periodic points of $wC_\fai$ is dense in $c_0$ as well,i.e, the operator $wC_\fai$ is chaotic.
\end{proof}

\begin{ex}
Let the mapping $\fai :\mathbb{N}\you \mathbb{N}$ defined in the example \ref{sushulizi} (2)
    and let  $w_n =\begin{cases}
	w^{-n},&		n=P_i\left( 3k+1 \right)   \\
        w^n,&     \text{else}
        \end{cases}$ 
        for all $k\ge1$ and $i\ge 0$ where $\left| w \right|>1$. Then one easily verifies that $\fai$ is injective, $\varphi ^n$ has no fixed point for any $n \in \mathbb{N} $ and 
\begin{subequations}
\begin{align*}
        \lim_{n\rightarrow \infty}\left| w\left( k \right) w\left( \fai \left( k \right) \right) \cdots w\left( \fai ^{n-1}\left( k \right) \right) \right|=\infty,\\
        \lim_{n\rightarrow \infty}\left| w(\fai^{-1}(k)) w(\fai^{-2}(k)) \cdots w(\fai^{-n}(k)) \right|=0.
\end{align*}
\end{subequations}
   for each $k\in \mathbb{N}$.  Hence, by the Theorem \ref{hundunchongyao}, the unbounded operator $wC_\fai$ is chaotic.
\end{ex}

\bibliographystyle{unsrt}
\bibliography{cite}

\begin{thebibliography}{10}

\bibitem{LinearChaos}
Karl-G, Grosse-Erdmann, and Alfred~Peris Manguillot.
\newblock {\em {Linear Chaos}}.
\newblock Springer, 2011.

\bibitem{birkhoff1929demonstration}
George~David Birkhoff.
\newblock D{\'e}monstration d'un th{\'e}or{\`e}me {\'e}l{\'e}mentaire sur les
  fonctions enti{\`e}res.
\newblock {\em CR Acad Sci. Paris Ser. I Math.}, 189:473--475, 1929.

\bibitem{maclane1952sequences}
Gerald~R MacLane.
\newblock Sequences of derivatives and normal families.
\newblock {\em Journal d’Analyse Math{\'e}matique}, 2(1):72--87, 1952.

\bibitem{rolewicz1969orbits}
Stefan Rolewicz.
\newblock On orbits of elements.
\newblock {\em Studia Mathematica}, 32(1):17--22, 1969.

\bibitem{kitai1984invariant}
Carol Kitai.
\newblock Invariant closed sets for linear operators.
\newblock 1984.

\bibitem{gethner1987universal}
Robert~M Gethner and Joel~H Shapiro.
\newblock Universal vectors for operators on spaces of holomorphic functions.
\newblock {\em Proceedings of the American Mathematical Society},
  100(2):281--288, 1987.

\bibitem{salas1995hypercyclic}
H{\'e}ctor~N Salas.
\newblock Hypercyclic weighted shifts.
\newblock {\em Transactions of the American Mathematical Society},
  347(3):993--1004, 1995.

\bibitem{salas1999supercyclicity}
H{\'e}ctor~N Salas.
\newblock Supercyclicity and weighted shifts.
\newblock {\em Studia Math}, 135(1):55--74, 1999.

\bibitem{costakis2004topologically}
George Costakis and Mart{\'\i}n Sambarino.
\newblock Topologically mixing hypercyclic operators.
\newblock {\em Proceedings of the American Mathematical Society},
  132(2):385--389, 2004.

\bibitem{martinez2002chaos}
F{\'e}lix Mart{\'\i}nez-Gim{\'e}nez and Alfredo Peris.
\newblock Chaos for backward shift operators.
\newblock {\em International Journal of Bifurcation and Chaos},
  12(08):1703--1715, 2002.

\bibitem{2000Grosse-Erdmann}
Karl Grosse-Erdmann.
\newblock Hypercyclic and chaotic weighted shifts.
\newblock {\em Studia Mathematica}, 139, 01 2000.

\bibitem{Hypercyclicity}
Vijay Srivastava and Harish Chandra.
\newblock Hypercyclicity, supercyclicity, and cyclicity of a weighted
  composition operator on $l^p$ spaces.
\newblock {\em Georgian Mathematical Journal}, 28, 04 2021.

\bibitem{bes2001chaotic}
Juan B{\`e}s, Kit~C Chan, and Steven~M Seubert.
\newblock Chaotic unbounded differentiation operators.
\newblock {\em Integral Equations and Operator Theory}, 40(3):257--267, 2001.

\bibitem{delaubenfels2003chaos}
R~DeLaubenfels, H~Emamirad, and K-G Grosse-Erdmann.
\newblock Chaos for semigroups of unbounded operators.
\newblock {\em Mathematische Nachrichten}, 261(1):47--59, 2003.

\bibitem{emamirad2005chaotic}
H~Emamirad and GS~Heshmati.
\newblock Chaotic weighted shifts in bargmann space.
\newblock {\em Journal of mathematical analysis and applications},
  308(1):36--46, 2005.

\end{thebibliography}

\end{document}